\newtheorem{thm}{Theorem}[section]
\newtheorem{cor}[thm]{Corollary}
\newtheorem{prop}[thm]{Proposition}
\newtheorem{lem}[thm]{Lemma}
\newtheorem{conj}[thm]{Conjecture}
\theoremstyle{definition}
\newtheorem{defn}[thm]{Definition}
\newtheorem{con}[thm]{Construction}
\newtheorem{notn}[thm]{Notation}
\theoremstyle{remark}
\newtheorem{rem}[thm]{Remark}
\newtheorem{rems}[thm]{Remarks}
\let\c@equation\c@thm
\numberwithin{equation}{section}
\definecolor{ao}{hsb}{0.67,1,1}
\definecolor{or}{hsb}{0.067,1,1}
\definecolor{green}{hsb}{0.33,1,0.5}
\title{(1,1) non-L-space knots are persistently foliar}
\author{Qingfeng Lyu}
\address{Department of Mathematics\\
  Boston College\\
  Chestnut Hill, MA 02467}
\email{lyuqi@bc.edu}
\date{\today}
\begin{document}

\begin{abstract}
  We prove that (1,1) non-L-space knots in $S^3$ and lens spaces are persistently foliar. This provides positive evidence for the L-space conjecture.
\end{abstract}

\maketitle

\section{Introduction}
\label{sec:1}

The L-space conjecture~\cite{boyer2013spaces}~\cite{juhasz2015survey} predicts that for an irreducible rational homology sphere, if it is not an L-space, then it admits a co-oriented taut foliation. Based on this, Delman and Roberts proposed the following knot version of the L-space conjecture:

\begin{defn}[\cite{delman2020taut}, Definition 1.7]
  A knot in a 3-manifold is called \textit{persistently foliar}, if except for one meridional slope, all boundary slopes of the knot complement are strongly realized by co-oriented taut foliations (i.e. to each boundary slope there exists a co-oriented taut foliation of the knot complement, which intersects the boundary torus transversely in a \textit{linear} foliation of that slope).
  \label{defn:pfoliar}
\end{defn}

\begin{conj}[\cite{delman2020taut}, Conjecture 1.9]
  A knot in an L-space is persistently foliar if and only if it has no non-trivial L-space or reducible surgeries.
  \label{conj:knotL}
\end{conj}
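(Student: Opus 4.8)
The plan is to treat the two implications separately; the ``only if'' direction is known and short, whereas the ``if'' direction carries all the content, is open in general, and is the one for which this paper supplies a model in the $(1,1)$ case. For ``only if'' I would argue by contraposition. Suppose $K\subset Y$ is persistently foliar and let $\alpha$ be a non-meridional slope. By Definition~\ref{defn:pfoliar} there is a co-oriented taut foliation $\mathcal F$ of $M=Y\setminus\nu(K)$ meeting $\partial M$ transversely in a linear foliation of slope $\alpha$; after isotoping $\mathcal F$ to a product near $\partial M$, one caps the $\alpha$-filling solid torus with meridian disks --- their boundaries are exactly the boundary leaves of $\mathcal F$ --- and obtains a co-oriented taut foliation of $Y_\alpha(K)$, since the core of the filling torus is a closed transversal to the new disk leaves and the transversals of $\mathcal F$ persist. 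But a closed oriented $3$-manifold admitting a co-oriented taut foliation is not an L-space (Ozsv\'ath--Szab\'o, via Eliashberg--Thurston) and, except for $S^1\times S^2$, is irreducible; the $S^1\times S^2$ case is dispatched separately by (generalized) Property R. Hence no non-meridional surgery on a persistently foliar knot is reducible or an L-space.

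For the ``if'' direction I would build the required foliations directly, rather than appeal to the (open) L-space conjecture. Given that $K$ has no non-trivial reducible or L-space surgery, the goal is, for each non-meridional slope $\alpha$, a co-oriented taut foliation of $M$ realizing $\alpha$ linearly on $\partial M$. The mechanism is the branched-surface machinery of Delman--Roberts and Li: one looks for finitely many properly embedded branched surfaces $B_1,\dots,B_k\subset M$ such that (i) each $B_i$ is laminar --- no sink disks, no disks of contact, with tractable complementary regions --- so every fully carried lamination is essential and, by Li's theorem, can be filled across the complementary regions to a co-oriented taut foliation of $M$; (ii) the laminations carried by $B_i$ realize a whole interval $I_i\subset\mathbb Q\cup\{\infty\}$ of boundary slopes, with the resulting foliation meeting $\partial M$ in exactly the linear foliation of that slope; and (iii) the $I_i$ together cover $(\mathbb Q\cup\{\infty\})\setminus\{\mu\}$, where $\mu$ is the meridian. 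For a $(1,1)$ knot the input is a genus-one doubly-pointed Heegaard diagram --- equivalently $K$ as an arc on a once-punctured torus --- and a candidate branched surface is read off the $\alpha$--$\beta$ intersection pattern; the ``almost L-space'' hypothesis pins $\widehat{HFK}(K)$, hence $CFK^\infty(K)$, down to a short list, so the diagram falls into finitely many families and (i)--(iii) reduce to a finite verification. One should also confirm that a $(1,1)$ almost L-space knot which is not an L-space knot has no non-trivial L-space surgery, so that the hypothesis of this direction genuinely applies to it.

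The hard part will be (i) and (iii) near the excluded meridian. Showing that a carried lamination thickens to a \emph{taut} foliation realizing the \emph{exact} linear slope --- not merely an essential lamination, and not a foliation with a Reeb component --- requires the right foliations of the complementary regions together with tight control of holonomy along $\partial M$, and this control degenerates for the slopes adjacent to $\mu$ and, for almost L-space knots, for those near which the surgery is ``almost'' an L-space; these extremal slopes, which are precisely what separates this situation from the L-space-knot case, are the crux. Running alongside this is the combinatorial problem of arranging that finitely many intervals $I_i$ overlap and leave no rational slope uncovered but $\mu$; it is exactly a branched surface, or family, with this ``persistently foliar'' behaviour that has no general proof and that the paper constructs in the $(1,1)$ almost L-space setting.
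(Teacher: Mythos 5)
The statement you are attempting to prove is Conjecture~\ref{conj:knotL}, which the paper records as an open conjecture of Delman--Roberts and does not prove or claim to prove; there is therefore no ``paper proof'' to compare against, and the real question is whether your proposal closes the gap. It does not. Your sketch of the ``only if'' direction is essentially the standard known argument: cap off the linear boundary foliation with meridian disks of the filling solid torus, then invoke Ozsv\'ath--Szab\'o (via Eliashberg--Thurston) to rule out L-spaces and Novikov--Rosenberg to rule out reducibility, with the $S^1\times S^2$ case handled separately. In outline this is fine (one still has to check that capping off creates no sphere leaves and that closed transversals survive, which is done carefully in Delman--Roberts), but it is not where the content lies.

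The ``if'' direction is the entire substance of the conjecture, and your proposal for it is a description of what a proof would have to produce, not a proof. You posit finitely many laminar branched surfaces $B_1,\dots,B_k$ whose realized slope intervals cover every non-meridional slope, but nothing in the proposal derives the existence of such branched surfaces from the hypothesis that $K$ has no non-trivial L-space or reducible surgeries; that implication \emph{is} the open conjecture, and you acknowledge as much. Two further points of friction with what the paper actually does: first, the paper's unconditional result (Theorem~\ref{thm:almost}) is only the special case of $(1,1)$ almost L-space knots, obtained from a single explicit ``modified Heegaard branched surface'' read off the strongly almost coherent diagram of Binns--Zhou, together with Lemma~\ref{lem:cusp} (a $4$-cusp/$2$-boundary refinement of Gabai's two-meridional-cusps argument) to realize \emph{all} non-meridional slopes at once --- not a family of branched surfaces each realizing an interval of slopes; second, the almost L-space hypothesis enters through the combinatorics of the $(1,1)$ diagram, not through pinning down $\widehat{HFK}$ or $CFK^\infty$ as you suggest. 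So the concrete gap is: the ``if'' direction is asserted as a program rather than argued, and no amount of the machinery you cite is known to bridge the hypothesis on surgeries to the existence of the required branched surfaces for a general knot.
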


A knot in an L-space is called an L-space knot if it admits another nontrivial L-space surgery. The L-space knots among (1,1) knots in $S^3$ and lens spaces are characterized by~\cite{greene2018space} using their (1,1) Heegaard diagrams. According to Conjecture~\ref{conj:knotL}, a (1,1) non-L-space knot either admits a reducible surgery, or is expected to be persistently foliar. 

In this paper we confirm that (1,1) non-L-space knots in $S^3$ and lens spaces all fall into the latter case:

\begin{thm}
  (1,1) non-L-space knots in $S^3$ and lens spaces are persistently foliar.
  \label{thm:main}
\end{thm}

In particular, we obtain the following corollary, which aligns with the cabling conjectures~\cite{acuna1986knot}\cite{baker2014cabling}:

\begin{cor}
  (1,1) non-L-space knots in $S^3$ and lens spaces do not admit reducible surgeries.
  \label{cor:reducible}
\end{cor}

We use branched surfaces to construct taut foliations. In 2020, Sarah Rasmussen proposed a way to construct taut foliations from a Heegaard splitting of a certain 3-manifold~\cite{rasmussen2020taut}. This idea was later developed by Tao Li using branched surfaces in~\cite{li2022taut}, where he proved the remarkable result that for a Heegaard genus two 3-manifold $M$, if $\pi_1(M)$ is left orderable, then $M$ admits a co-orientable taut foliation. Our construction of branched surfaces can be seen as a modification of Li's construction. In particular, all the closed 3 manifolds involved in Theorem~\ref{thm:main} have genus two Heegaard splittings. 

Our construction essentially uses the topological description of (1,1) non-L-space knots in~\cite{greene2018space}. The topological description guarantees the existence of an interesting sector on the branched surface. We can then ``reverse'' this sector to get a branched surface with desired behaviors. We remark that this ``reversing'' operation also plays a central role in~\cite{delman2020taut}. More details of the construction will be given in section~\ref{sec:2-2}.

The technically complicated part of this paper is to prove our branched surfaces fully carry laminations. Here our methodology is based on the laminar branched surface theory in~\cite{li2002laminar}, and we will heavily use the techniques developed by the author in~\cite{lyu2024knot}, as well as the results in~\cite{lyu2024knot} themselves. Interestingly, the main result of~\cite{lyu2024knot} is about slope detections developed by Boyer-Clay and Boyer-Gordon-Hu in~\cite{boyer2017foliations}~\cite{boyer2021slope}, but the main result of this paper is essentially not. Philosophically, this provides an example where slope detections could help with finding taut foliations in closed, hyperbolic 3-manifolds.

\vspace{6pt}

\textbf{Organisation of this paper.} In section~\ref{sec:2-1} we review the related theories of (1,1) knots and branched surfaces. In section~\ref{sec:2-2} we define our branched surfaces for any (1,1) non-L-space knot in $S^3$ or a lens space, and prove that if any of these branched surfaces fully carries a lamination, then the corresponding knot is persistently foliar. In section~\ref{sec:3} we discuss reduction operations on (1,1) diagrams which simplifies the lamination problem. In sections~\ref{sec:4} and~\ref{sec:5} we deal with the two terminating cases of our reductions respectively. The proof of Theorem~\ref{thm:main} is given at the end of section~\ref{sec:3}, modulo the technical details.

\vspace{6pt}

\textbf{Acknowledgements} Many thanks to Zipei Nie for his interest and stimulating conversations which initiated this project. The author also thanks John Baldwin and Josh Greene for helpful conversations. The author is deeply in debt to his advisor Tao Li for his patience, encouragement, and many valuable comments.

\section{Preparations}
\label{sec:2-1}

In this section we briefly review the related theories of (1,1) knots and branched surfaces. This is largely a rewritten version of~\cite{lyu2024knot}, section 2.

\subsection{(1,1) knots and (1,1) diagrams}
\label{subsec:2.1}

A (1,1) knot is a knot that admits a (1,1) diagram, or doubly pointed genus one Heegaard diagram $(\Sigma,\alpha,\beta,z,w)$, where $\Sigma$ denotes the Heegaard torus, $\alpha,\beta$ the curves of the Heegaard diagram, and $z,w$ the basepoints. The knot is recovered from the diagram by taking properly embedded, boundary parallel arcs in each Heegaard solid torus connecting the basepoints $z,w$, while avoiding the compression disks bounded by $\alpha,\beta$ respectively. We assume the torus $\Sigma$ and the two curves $\alpha,\beta$ are oriented. Moreover, \textbf{we assume the ambient space to be a rational homology sphere}, i.e. $S^3$ or some lens space.

\begin{figure}[!hbt]
  \begin{overpic}[scale=0.9]{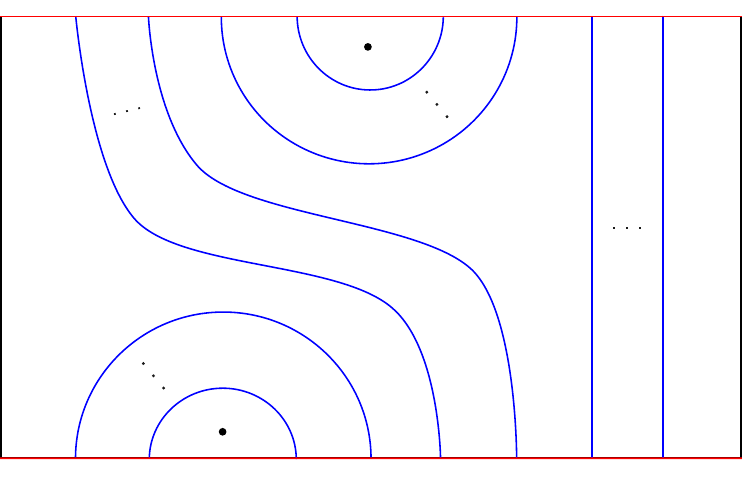}
      \put(27,7){$z$}
      \put(46,59){$w$}
      \put(96,4.5){$\color{red}\alpha$}
      \put(90.5,55){$\color{ao}\beta$}
      \put(9.5,0){$1\;\;\;2\;\;\;...$}
      \put(89,0.5){$p$}
      \put(5.5,64){$(s+1)\;\;\;...$}
      \put(18,12){$q$}
      \put(18,47){$r$}
      \put(56,49){$q$}
      \put(63,40){$(p-2q-r)$}
  \end{overpic}
  \caption{Reduced (1,1)-diagram parametrized by $(p,q,r,s)$}
  \label{fig:reduced11}
\end{figure}

A (1,1) diagram is called reduced if each bigon contains a basepoint. According to~\cite{rasmussen2005knot}, a reduced (1,1) diagram can be put in a standard form and parametrized by a 4-tuple $(p,q,r,s)$ (see Figure~\ref{fig:reduced11}, where we say $\alpha$ is put in \textbf{standard position}). Now $\alpha$ cuts $\beta$ into several arcs. The $\beta$-arcs connecting the same side of $\alpha$ are called \textbf{rainbow arcs}, and the $\beta$-arcs connecting different sides of $\alpha$ are called \textbf{vertical arcs}. The reduced (1,1) diagram is called \textbf{non-simple} if it contains rainbow arcs.

\vspace{3pt}

\textbf{Hyperelliptic involution and symmetry for (1,1) diagrams}

An important feature for (1,1) diagrams is the symmetry that comes from the hyperelliptic involution.

\begin{figure}[!hbt]
  \begin{overpic}[scale=0.6]{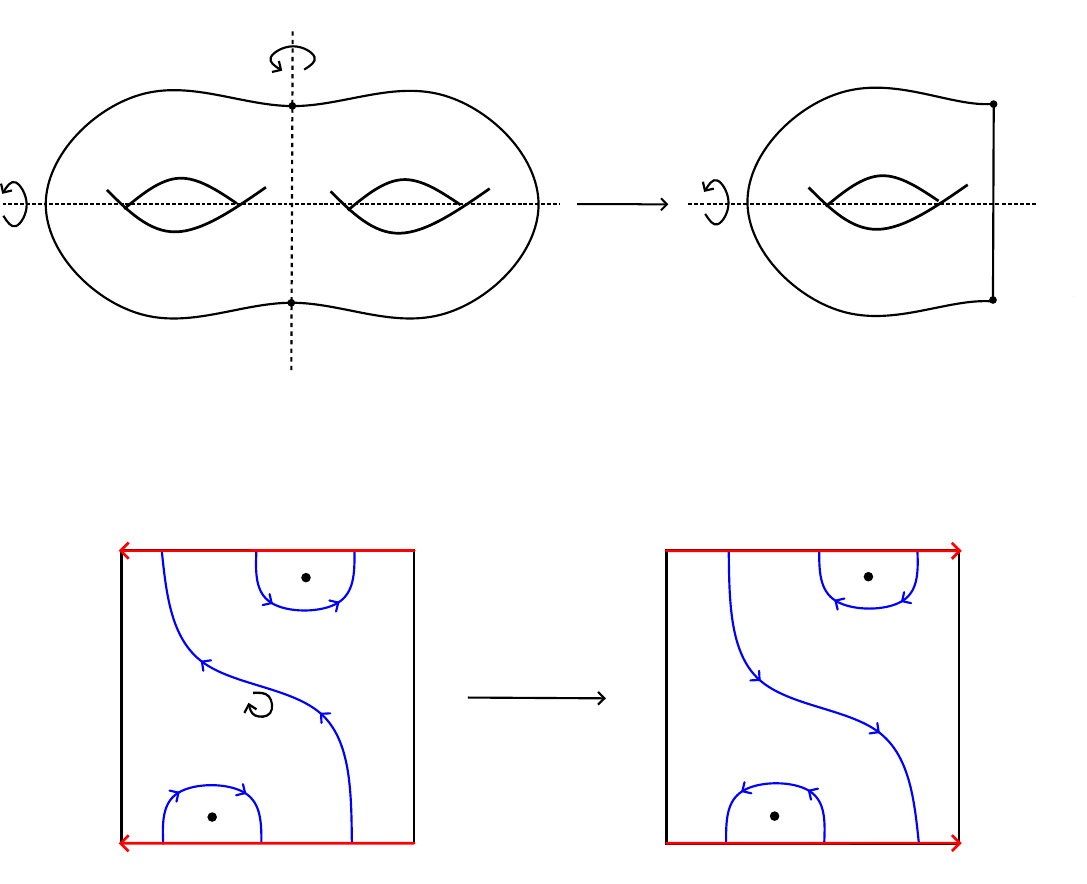}
      \put(50,42){$(a)$}
      \put(50,0){$(b)$}
      \put(25,54){$z$}
      \put(92,53){$z$}
      \put(24.5,70){$w$}
      \put(92,72){$w$}
      \put(2.3,59){$\tilde{h}$}
      \put(67,59){$h$}
      \put(29,77){$\tau$}
      \put(24,13){$h$}
      \put(16,28){$\color{ao}\beta$}
      \put(68,28){$\color{ao} h(\beta)$}
      \put(36,1.5){$\color{red}\alpha$}
      \put(86,0.5){$\color{red}h(\alpha)$}
      \put(17,5){$z$}
      \put(25,28){$w$}
      \put(72,5){$z$}
      \put(81,28){$w$}
  \end{overpic}
  \caption{Hyperelliptic involution}
  \label{fig:hypinvo}
\end{figure}

We can regard the torus $\Sigma$ with 2 basepoints $(z,w)$ as a quotient orbifold of a genus-2 surface by a $(\mathbb{Z}/2\mathbb{Z})$-action generated by $\tau$, as shown in Figure~\ref{fig:hypinvo}.$(a)$. Then the hyperelliptic involution of the genus-2 surface $\tilde{h}$ descends to $h$ a $\pi$-rotation of the square representing $\Sigma$, exchanging the two basepoints, see Figure~\ref{fig:hypinvo}.$(a)(b)$. On the other hand, we know the hyperelliptic involution $\tilde{h}$ fixes all isotopy classes of simple closed curves, reversing orientations of the non-separating ones while preserving those of the separating ones~\cite{haas1989geometry}. Since $\alpha$ and $\beta$ are essential in $\Sigma$, their lifts are non-separating, and hence their orientations are reversed by $h$. Since the diagram obtained by doing $\pi$-rotation is still reduced, we know $(\Sigma,h(\alpha),h(\beta),z,w)$ is actually the same (1,1) diagram with all orientations reversed. See Figure~\ref{fig:hypinvo}.$(b)$. This gives us an important symmetry of the (1,1) diagrams.

\vspace{3pt}

\textbf{Sink, source, and parallel arcs and sectors}

We note that not all 4-tuples give a (1,1) diagram, since here $\beta$ needs to be an essential simple closed curve. We remind the readers of the following definitions and lemmas in~\cite{lyu2024knot} that further characterize (1,1) diagrams:

We consider the regions of $\Sigma$ cut off by $\alpha$ and $\beta$. Suppose $\alpha$ is placed in standard position, then for any non-simple (1,1) diagram (as depicted in Figure~\ref{fig:reduced11}), there are two bigons each containing one basepoint, two hexagons or one octagon bounded by both rainbow arcs and vertical arcs, and many quadrilaterals bounded by either rainbow arcs or vertical arcs. From now on unless otherwise specified, when we say bigons in a reduced (1,1) diagram, we refer to the regions cut out by the $\alpha$ and $\beta$ curves, i.e. only the \textit{innermost} bigons.

\begin{figure}[!hbt]
  \begin{overpic}[scale=0.6]{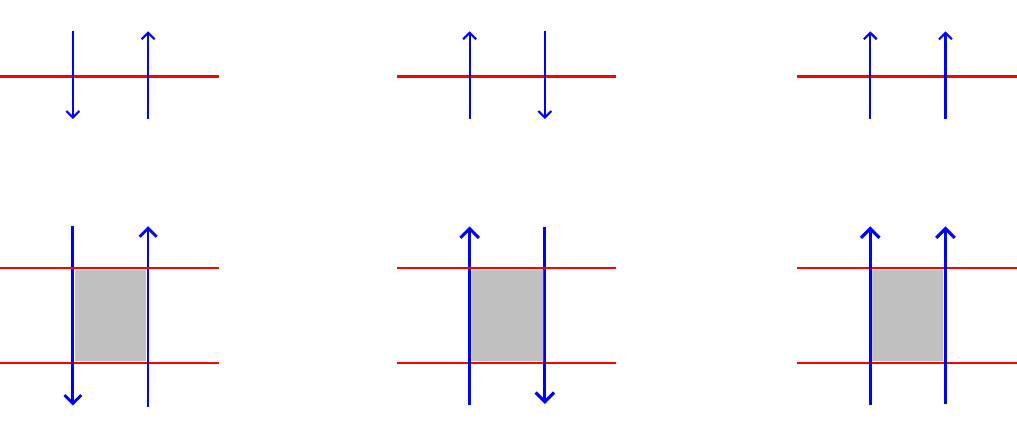}
      \put(22,34){$\color{red}\alpha$}
      \put(16,40){$\color{ao}\beta$}
      \put(4,26){$a.\;$sink arc}
      \put(42,26){$b.\;$source arc}
      \put(80,26){$c.\;$parallel arc}
      \put(2.5,0){$d.\;$sink sector}
      \put(40,0){$e.\;$source sector}
      \put(78.4,0){$f.\;$parallel sector}
  \end{overpic}
  \caption{Arcs and sectors}
  \label{fig:sink_source_parallel}
\end{figure}

\begin{defn}[\cite{lyu2024knot}, Definition 2.3]
  Fixing orientations of $(\Sigma,\alpha,\beta)$, for each $\alpha$-arc cut out by $\beta$, we say it is ($\beta$-)\textbf{sink} if it always lies to the left of the $\beta$ curve at its endpoints, ($\beta$-) \textbf{source} if it always lies to the right of the $\beta$ curve at endpoints, and ($\beta$-)\textbf{parallel} otherwise. See Figure~\ref{fig:sink_source_parallel} $a\sim c$. It follows immediately that for a quadrilateral region its two boundary $\alpha$-arcs must be of the same type. We call it a ($\beta$-)\textbf{sink sector} if its boundary $\alpha$-arcs are sink, a ($\beta$-)\textbf{source sector} if its boundary $\alpha$-arcs are source, and a ($\beta$-)\textbf{parallel sector} otherwise. See Figure~\ref{fig:sink_source_parallel} $d\sim f$. In addition, we call a bigon region a ($\beta$-)\textbf{sink bigon} if its boundary $\alpha$-arc is sink, and a ($\beta$-)\textbf{source bigon} if its boundary $\alpha$-arc is source.

  By interchanging $\alpha$ and $\beta$ in the above definition, we can also define $\alpha$-sink (source, parallel) arcs, $\alpha$-sink (source, parallel) sectors, and $\alpha$-sink (source) bigons.
  \label{def:sink_source_parallel}
\end{defn}

\vspace{3pt}

\textbf{Sink and source tubes}

A crucial observation here is that along a boundary $\alpha$-arc of a $\beta$-sink sector, it can only connect to another $\beta$-sink sector or some non-quadrilateral region. It follows that the $\beta$-sink sectors are connected along their boundary $\alpha$-arcs to form some tubes. The following Lemma is essential to defining \textit{the} tube:

\begin{lem}[\cite{lyu2024knot}, Remark 2.4(2) and Proposition 2.5]
  In a non-simple, reduced (1,1) diagram, there is exactly one sink bigon and one source bigon. Moreover, there is a unique sink $\alpha$-arc and a unique source $\alpha$-arc among the boundary $\alpha$-arcs of the hexagons or octagon.
  \label{lem:pre_sink_tube}
\end{lem}

\begin{proof}[Proof sketch]
  Use the hyperelliptic involution symmetry to analyze the boundary $\alpha$-arcs of the bigons and hexagons or octagon.
\end{proof}

\begin{defn}[\cite{lyu2024knot}, Definition 2.6]
  In a non-simple, reduced (1,1) diagram, the ($\beta$-)\textbf{sink} \textbf{tube} is the union of ($\beta$-)sink sectors that connect the ($\beta$-)sink bigon to the sink boundary $\alpha$-arc of the hexagons or octagon, see Figure~\ref{fig:sk_tube}. The ($\beta$-)\textbf{source tube} is the union of ($\beta$-)source sectors that connect the ($\beta$-)source bigon to the source boundary $\alpha$-arc of the hexagons or octagon.
  \label{def:sink_tube}
\end{defn}

\begin{figure}[!bht]
  \begin{overpic}[scale=0.7]{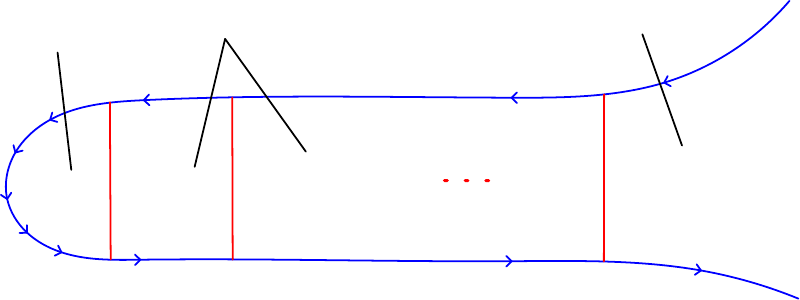}
      \put(-5,33){sink bigon}
      \put(20,34){sink sectors}
      \put(55,35){hexagon or octagon}
  \end{overpic}
  \caption{($\beta$-)sink tube}
  \label{fig:sk_tube}
\end{figure}

\begin{lem}[\cite{lyu2024knot}, Proposition 2.7]
  The ($\beta$-)sink (resp. source) tube contains all ($\beta$-)sink (resp. source) sectors.
  \label{lem:char_sink_tube}
\end{lem}

\vspace{3pt}

\textbf{Coherent and incoherent diagrams}

An important landmark in the theory of (1,1) knots is established by~\cite{greene2018space}, where the authors characterized (1,1) L-space knots in terms of their (1,1) diagrams. For a standard reduced (1,1) diagram as depicted in Figure~\ref{fig:reduced11}, two rainbow arcs around the same basepoint are called \textbf{inconsistent}, if they are in different directions. A reduced (1,1) diagram is called \textbf{coherent}, if there are no inconsistent arcs. The following observation is made in~\cite{greene2018space}:

\begin{thm}[\cite{greene2018space}, Theorem 1.2]
  A reduced (1,1) diagram represents an L-space knot if and only if it is coherent.
  \label{thm:11L}
\end{thm}

\subsection{Branched surfaces}
\label{subsec:2.2}

A (co-oriented) \textbf{branched surface} $\mathcal{B}$ is a compact space locally modelled on Figure~\ref{fig:brsf}.$(a)$. The subset of points that have no neighborhood isomorphic to $\mathbb{R}^2$ is called the \textbf{branch locus} $L(\mathcal{B})$. According to the local picture we can think of the branch locus as a collection of transversely-intersecting \textit{immersed} curves in $B$. Points where the immersed curves intersect are called \textbf{double points}. Away from the double points, we can define the \textbf{branch direction} on branch locus up to homotopy, such that the direction is tangent to the branched surface, transverse to the branch locus, and always points to the side with fewer components, see Figure~\ref{fig:brsf}.$(a)$. Connected components of $\mathcal{B}-L(\mathcal{B})$ are called \textbf{branch sectors} of the branched surface.

\begin{figure}[!hbt]
    \begin{overpic}[scale=0.6]{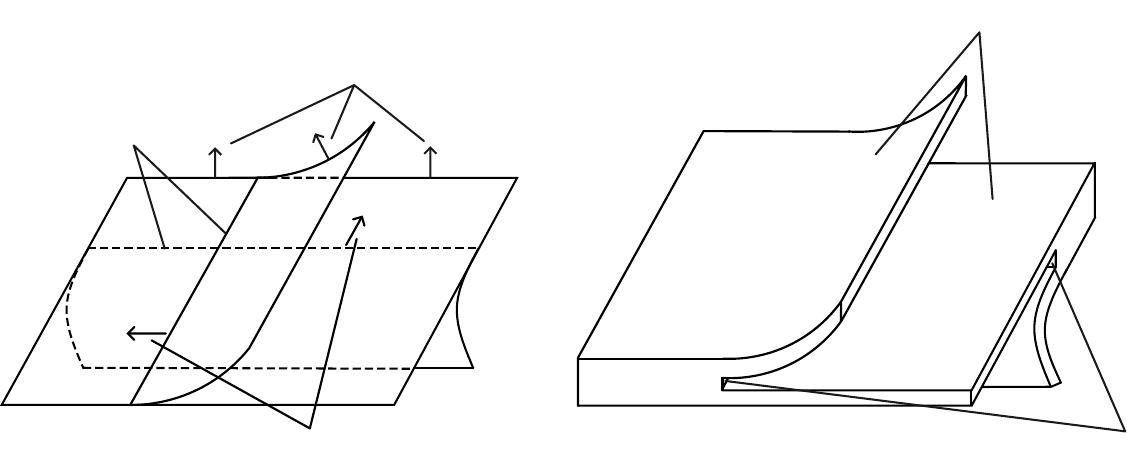}
        \put(0,29){branch locus}
        \put(20,35){co-orientations}
        \put(28,1){branch direction}
        \put(85,40){$\partial_h$}
        \put(99,0){$\partial_v$}
        \put(20,0){$(a)$}
        \put(70,0){$(b)$}
    \end{overpic}
    \caption{(Co-oriented) branched surface and its regular neighborhood}
    \label{fig:brsf}
\end{figure}

For a (co-oriented) branched surface $\mathcal{B}$, we can define its regular neighborhood $N(\mathcal{B})$ to be an $I$-bundle over the branched surface. This can be constructed by taking a trivial $I$-bundle for each sector and then pasting them along the branch locus so that $N(\mathcal{B})$ is locally modelled on Figure~\ref{fig:brsf}.$(b)$. We can then define the \textbf{horizontal boundary} $\partial_h N(\mathcal{B})$ to be the boundary of $N(\mathcal{B})$ consisting of the boundaries of the fibers, and the \textbf{vertical boundary} $\partial_v N(\mathcal{B})$ to be the remaining boundary of $N(\mathcal{B})$, which consists of interior segments of $I$-fibers at the branch locus and possibly also fibers transverse to the boundary of the branched surface. For a branched surface with circle boundary, the horizontal boundary of its regular neighborhood is a disjoint union of compact oriented surfaces, and the vertical boundary a disjoint union of annuli. As we regard $N(\mathcal{B})$ as an $I$-bundle over $\mathcal{B}$, there is a bundle projection $\pi:N(\mathcal{B})\rightarrow \mathcal{B}$ collapsing the $I$-fibers. Notice that $\pi(\partial_v N(\mathcal{B}))=L(\mathcal{B})\cup \partial \mathcal{B}$.

\vspace{3pt}

\textbf{Laminar branched surfaces}

Nowadays we use laminar branched surfaces to construct (essential) laminations and (taut) foliations in 3-manifolds. We refer the readers to~\cite{li2002laminar} for details of the laminar branched surface theory. In particular, laminar branched surfaces are refinements of the essential branched surfaces in~\cite{gabai1989essential}, and are defined as follows:

\begin{defn}
  A branched surface $\mathcal{B}$ in a closed, oriented 3-manifold $M$ is called \textit{laminar} if 
  \begin{enumerate}[(i)]
      \item $\partial_h N(\mathcal{B})$ is incompressible in $M-int(N(\mathcal{B}))$, no component of $\partial_h N(\mathcal{B})$ is a sphere, and $M-int(N(\mathcal{B}))$ is irreducible (where $int(X)$ is the interior of $X$),
      \item there is no monogon in $M-int(N(\mathcal{B}))$,
      \item there is no Reeb component (i.e. $\mathcal{B}$ does not carry any torus that bounds a solid torus in $M$), and
      \item there is no sink disk (a \textit{sink disk} is a disk branch sector where the branch directions at boundary always point inwards, see Figure~\ref{fig:skdsk}).
  \end{enumerate}
  \label{def:laminar}
\end{defn}

\begin{figure}[!hbt]
  \begin{overpic}[scale=0.5]{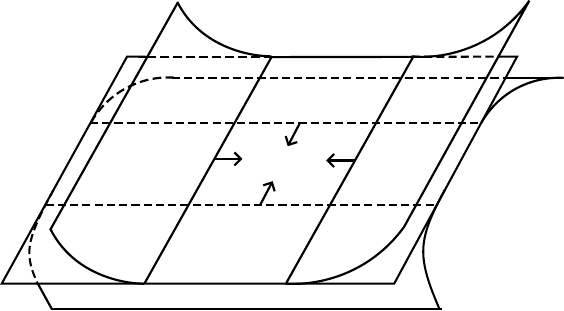}
      
  \end{overpic}
  \caption{Sink disk}
  \label{fig:skdsk}
\end{figure}

\begin{thm}[\cite{li2002laminar}, Theorem 1]
  Laminar branched surfaces in closed, oriented 3-manifolds fully carry (essential) laminations.
  \label{thm:li}
\end{thm} 

In this paper we will use the following lemma, which is directly derived from Theorem~\ref{thm:li}:

\begin{lem}[\cite{lyu2024knot}, Lemma 2.11]
  Let $\mathcal{B}$ be a co-oriented branched surface with boundary a union of circles. If $\mathcal{B}$ is sink disk free, not carrying any torus, and no component of $\partial_h N(\mathcal{B})$ is a disk or a sphere, then $\mathcal{B}$ fully carries a lamination.
  \label{lem:sk_disk_free}
\end{lem}

\vspace{3pt}

\textbf{Diamonds, and reversing sectors}

We further highlight the following notation and construction, which will be used in the next subsection.

\begin{notn}[Diamonds]
  To describe a branched surface where sectors are attached to a planar diagram, we use the diamond notations in Figure~\ref{fig:delmond} for branch directions that point out of the planar diagram. The notation originated from~\cite{wu2012persistently}, and is developed in~\cite{delman2020taut}. In particular Figure~\ref{fig:delmond} is from~\cite{delman2020taut}.
\end{notn}

\begin{figure}[!hbt]
  \begin{overpic}[scale=0.3]{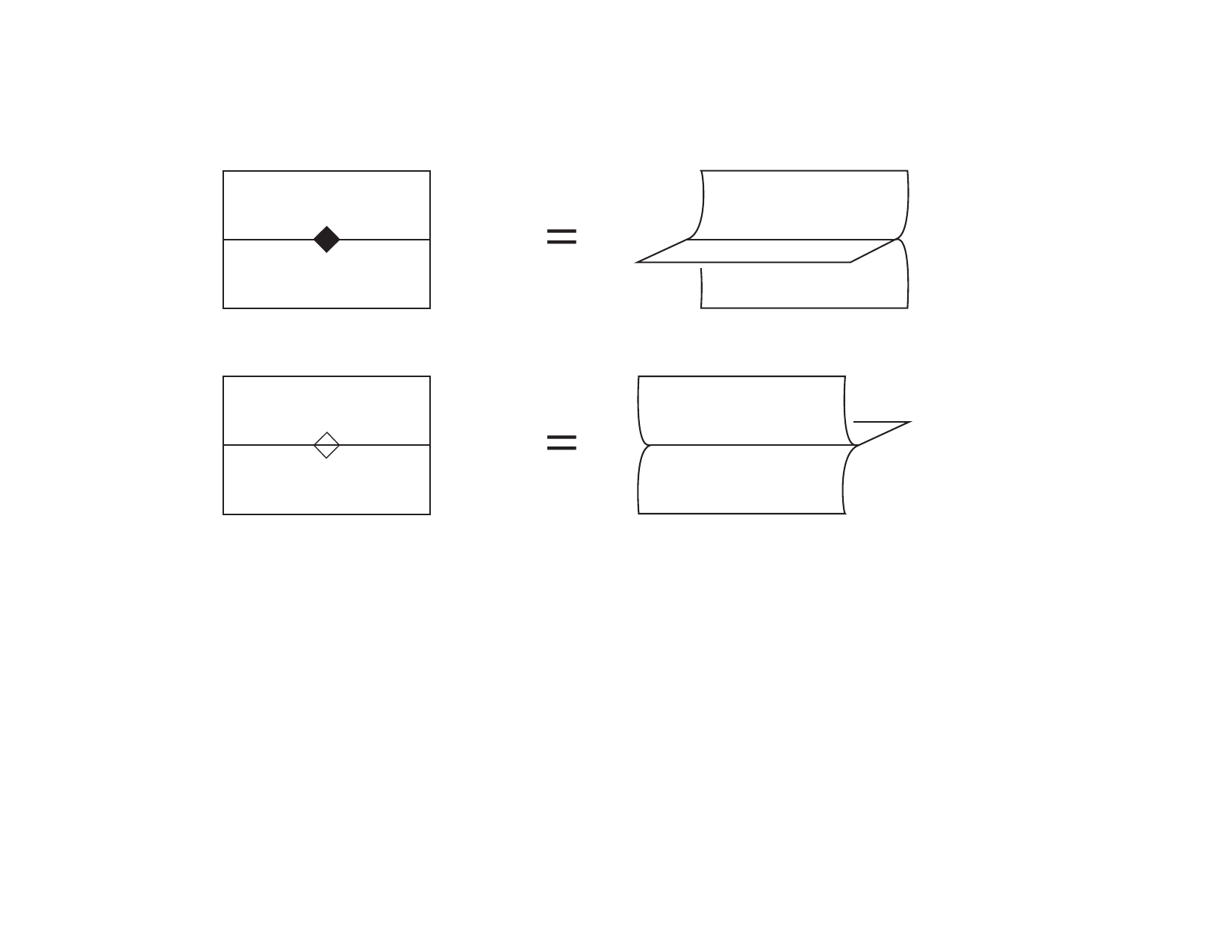}
    
  \end{overpic}
  \caption{Diamond notations}
  \label{fig:delmond}
\end{figure}

\begin{con}[Reversing co-orientations]
  Let $\mathcal{B}$ be a co-oriented branched surface. Let $S$ be a branch sector of $\mathcal{B}$, where along $\partial S$ the branch directions always point out of $S$. We can then reverse the co-orientation of $S$ and get a new branched surface $\mathcal{B'}$, see Figure~\ref{fig:revsector}, where colored lines are the branch locus (as immersed curves) and arrows indicate branch directions.
\end{con}

\begin{figure}[!hbt]
  \begin{overpic}[scale=0.6]{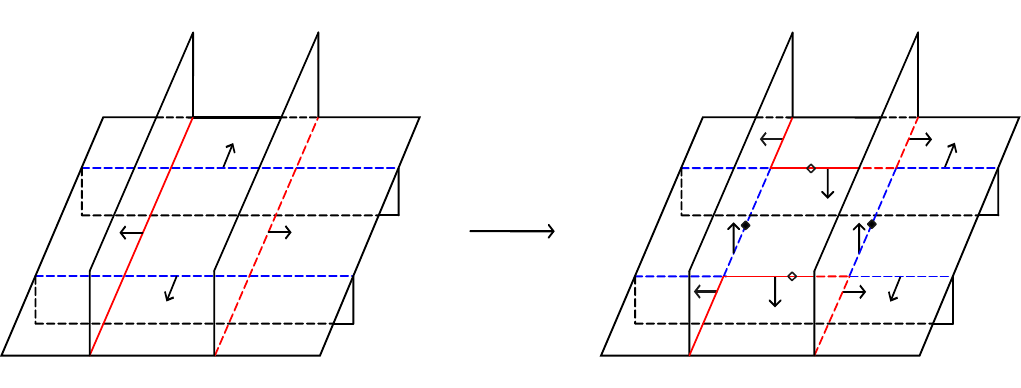}
    \put(35,23){\small $+$}
    \put(95,23){\small $+$}
    \put(90,13){\small $+$}
    \put(65,13){\small $+$}
    \put(72,8){\small $+$}
    \put(80,23){\small $+$}
    \put(75,13){\small $-$}
    \put(17,13){\small $+$}
  \end{overpic}
  \caption{Reversing the co-orientation of a source sector}
  \label{fig:revsector}
\end{figure}

\begin{rems}~\
  \begin{enumerate}
  \item As can be observed in Figure~\ref{fig:revsector}, the branch locus $L(\mathcal{B})$ changes as a collection of immersed curves.
  \item In general, we can reverse the co-orientation of a sub-branched surface of $\mathcal{B}$, as long as along the boundary the branch direction always points out. We will use a slightly more general case below, where one should be convinced of the construction by checking the co-orientations of the sectors.
  \end{enumerate}
\end{rems}

\vspace{3pt}

\textbf{Splittings}

We also remind the readers of the splitting operations that will be used in the next few sections. A \textbf{splitting} of a branched surface $\mathcal{B}$ along an oriented compact surface $S\subset N(\mathcal{B})$ (where $S$ is transverse to the $I$-fibers) results in a branched surface $\mathcal{B}^S$, such that there is an inclusion of $I$-bundle $N(\mathcal{B}^S)\subset N(\mathcal{B})$ respecting the $I$-fibers, where $N(\mathcal{B})\backslash N(\mathcal{B}^S)=N(S)\cong S\times I$. Notice that if $\partial S\cap \partial_v N(\mathcal{B})=\varnothing$, then the splitting is just creating a bubble, so we generally hope this does not happen. Typically, $S$ is a disk and $\partial S$ intersects $\partial_v N(\mathcal{B})$ in arcs, and the splitting is locally modelled on one of the pictures in Figure~\ref{fig:splittings}.

\begin{figure}[!hbt]
  \begin{overpic}[scale=0.8]{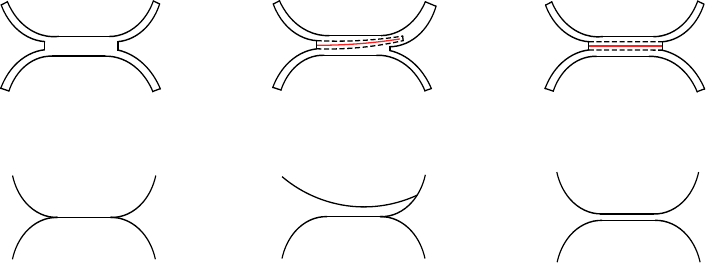}
      
  \end{overpic}
  \caption{Splittings}
  \label{fig:splittings}
\end{figure}

We further remark that splittings along disks locally modelled on the middle picture of Figure~\ref{fig:splittings} will not change the topology of $N(\mathcal{B})$, nor the topology of its horizontal and vertical boundaries. On the other hand, splittings locally modelled on the right picture of Figure~\ref{fig:splittings} will change the topology of $N(\mathcal{B})$.

\section{The branched surface models}
\label{sec:2-2}

\subsection{The modified Heegaard branched surfaces}
\label{subsec:2.3}

In this subsection we define the ``modified Heegaard branched surfaces'' for a reduced, incoherent (1,1) diagram. The name comes from Sarah Rasmussen's ``Heegaard foliations''.

Let $(\Sigma,\alpha,\beta,z,w)$ be a reduced, incoherent (1,1) diagram. We pick orientations of the curves $\alpha,\beta$ so that the $\alpha$-source bigon is $\beta$-sink (in other words, the boundary of an innermost bigon is neither clockwise nor counter-clockwise), and put $\alpha$ in standard position. Suppose the basepoint $w$ is contained in the $\alpha$-source bigon.

Similar to~\cite{lyu2024knot}, Definition 2.13, we can construct a branched surface from the (1,1) diagram. We first pick the torus $\Sigma$. Then we attach a disk along the $\beta$ curve to the positive side of $\Sigma$, and a disk along the $\alpha$ curve to the negative side of $\Sigma$. We further smooth these disks to get a branched surface, such that the branch direction always points to the left of the oriented curves $\alpha,\beta$ (when observed on $\Sigma$ from the positive side). We can then remove a small open disk in the interior of each of the two bigons containing the basepoints. We call the resulting branched surface $\mathcal{B}$ the \textbf{Heegaard branched surface of the (1,1) diagram}.

We can count rainbow arcs starting from $w$ and outwards, until we find the first arc inconsistent with previous arcs, which we denote as $\beta_0$. This $\beta_0$, together with an $\alpha$-arc $\alpha_0$, bounds a (generalized) bigon $S$ in the reduced (1,1) diagram (see Figure~\ref{fig:reversing}). We further label the $\beta$-arcs in the interior of $S$ as $\beta_1,...,\beta_n$ in order\footnote{In this paper we usually think of arcs as closed, i.e., including the endpoints. So precisely speaking the $\beta$-arcs here are actually \textit{properly embedded} in $S$. We nevertheless sometimes say they are ``in the interior'', in contrast with the arc $\beta_0$ ``on the boundary''.}, with $\beta_1$ being the innermost arc, i.e., the boundary $\beta$-arc of the $\alpha$-source bigon. Let $S_0$ be the quadrilateral sector whose boundary $\beta$-arcs are $\beta_0$ and $\beta_n$. Then $S_0$ is both $\alpha$-source and $\beta$-source. It is then a source sector of the Heegaard branched surface.

\begin{defn}[Type I modified Heegaard branched surface]
  For any quadrilateral source sector $T$ on $\Sigma$ in the Heegaard branched surface, we can reverse its co-orientation and obtain a new branched surface $\mathcal{B}_T$. We call $\mathcal{B}_T$ the \textit{type I modified Heegaard branched surface associated to $T$.}
  \label{defn:typeIBS}
\end{defn}

\begin{figure}[!htb]
  \begin{overpic}[scale=0.5]{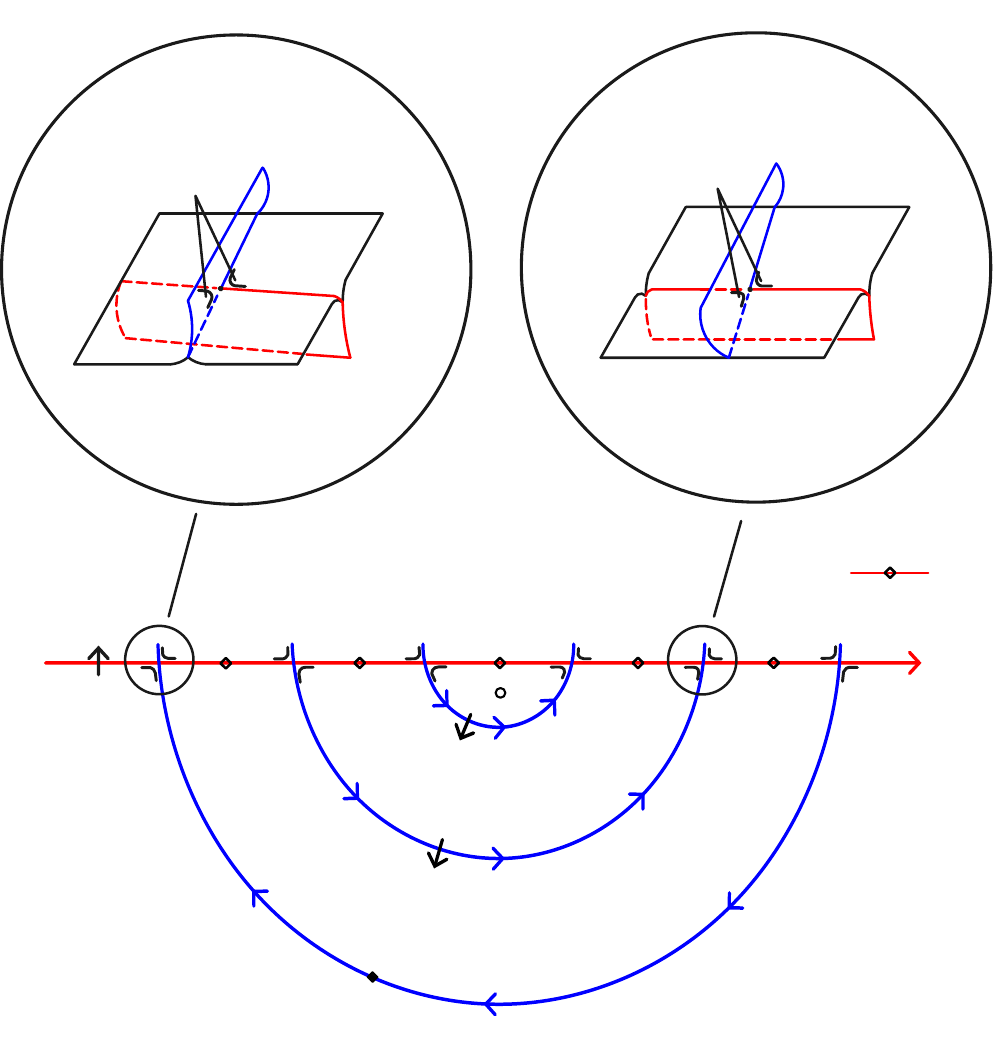}
    \put(65,5){\color{blue} $\beta_0$}
    \put(90,44){\color{red} $\alpha_0$}
    \put(8,28){$+$}
    \put(22,28){$-$}
    \put(35,28){$-$}
    \put(43.4,33){\small $-$}
    \put(8,43){$+$}
    \put(4,82){branch loci}
    \put(53.5,82.5){branch loci}
    \put(49,33){\small $w$}
    \put(30,16){$S_0$}
    \put(58,17){\color{blue} $\beta_2$}
    \put(51,28){\color{blue} $\beta_1$}
  \end{overpic}
  \caption{Type II modified Heegaard branched surface}
  \label{fig:reversing}
\end{figure}

\begin{defn}[Type II modified Heegaard branched surface] 
  We notice that $\partial S=\alpha_0\cup \beta_0$ is clockwise, indicating that along $\alpha_0\cup \beta_0$ the branch directions always point out of $S$. We can then reverse the co-orientations of the branch sectors contained in $S$ and get a new branched surface. As a result, the branch direction along $\alpha_0$ points to the $\alpha$-disk, while the branch direction along $\beta_0$ points to the $\beta$-disk. Moreover, the branch direction along the $\beta$-arcs in the interior of $S$ now points to the right instead. Again see Figure~\ref{fig:reversing}.

  We call the resulting branched surface $\mathcal{B}_{\alpha}$ the \textit{type II modified Heegaard branched surface of the (1,1) diagram associated to $\alpha$}.
  \label{defn:typeIIBS}
\end{defn}

\vspace{4pt}

\begin{rems}~\
  \begin{enumerate}
    \item Similar to~\cite{lyu2024knot}, we can regard these branched surfaces as embedded in the ambient space the knot lives in, where $\Sigma$ is identified with the Heegaard torus of the ambient space. 
    \item The type I modified branched surface depends on the source sector $T$ to be reversed. The type II modified branched surface depends on the curve in standard position. Here different choices will usually give different branched surfaces. On the other hand, the two different admissible orientations of $(\alpha,\beta)$ (for the $\alpha$-source bigon to be $\beta$-sink) would give homeomorphic branched surfaces, with homeomorphism induced by the hyperelliptic involution of $(\Sigma,z,w)$. Hence it makes sense for us to talk about modified Heegaard branched surfaces of reduced, incoherent (1,1) diagrams without specifying the orientations of the curves.
  \end{enumerate}
  \label{rems:brsfs}
\end{rems}

\begin{lem}
  Let $(\Sigma,\alpha,\beta,z,w)$ be a reduced, incoherent (1,1) diagram. Let $\mathcal{B}$ be a (type I or II) modified Heegaard branched surface of the (1,1) diagram. Then $N(\mathcal{B})$ is homeomorphic to the complement of the (1,1) knot that the diagram represents. Moreover, the vertical boundary $\partial_v N(\mathcal{B})$ contains 6 meridional annuli, with 4 of them coming from the cusps or branch locus, and 2 of them coming from the boundary circles at the 2 basepoints.
  \label{lem:vB}
\end{lem}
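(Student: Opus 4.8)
The plan is to track the construction of Definition~\ref{defn:modifiedHgBS} in three steps, starting from the honest Heegaard branched surface $\mathcal{B}_0=\Sigma\cup D_\alpha\cup D_\beta$ (smoothed, with no disks removed and no co-orientation reversal). Writing $M=V_\alpha\cup_\Sigma V_\beta$ for the genus-one Heegaard splitting, with $D_\alpha\subset V_\alpha$ and $D_\beta\subset V_\beta$ the meridian disks bounded by $\alpha$ and $\beta$, a standard argument shows that the complement $M\smallsetminus\mathring N(\mathcal{B}_0)$ consists of two $3$-balls, one ball $B_\alpha$ lying in $V_\alpha$ and one ball $B_\beta$ lying in $V_\beta$ (cutting each handlebody along its meridian disk). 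The branch locus of $\mathcal{B}_0$ is $\alpha\cup\beta$, with double points at $\alpha\cap\beta$; since $D_\alpha$ and $D_\beta$ meet $\Sigma$ from opposite sides, the cusps over $\alpha$ and over $\beta$ do not interact at these double points, so $\partial_vN(\mathcal{B}_0)$ is exactly two annuli, $A_\alpha$ over $\alpha$ and $A_\beta$ over $\beta$.

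Next I would deal with the co-orientation reversal. The point I would use is that reversing the co-orientations of the sectors inside $S$ does not change the underlying $2$-complex of the branched surface --- only the compatible smoothing, and hence the fibered structure $N(\mathcal{B})$ carries, is altered --- so $N(\mathcal{B})$ is homeomorphic, as a manifold, to the regular neighborhood obtained in the last step below without any reversal; in particular, once one checks (using the ``branch direction to the left of $\alpha,\beta$'' convention) that the reversal creates no complementary ball of contact, the complement of $N(\mathcal{B})$ in $M$ is still two balls. What the reversal does change is the fibered structure, hence $\partial_v$: the branch direction flips along $\alpha_0$, along $\beta_0$, and along the interior $\beta$-arcs of $S$, and the local models --- this is what Figure~\ref{fig:reversing} records --- show that the branch locus reorganizes so that $\partial_v$ picks up two additional cusp annuli running along $\partial S=\alpha_0\cup\beta_0$. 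Thus after the reversal $\partial_vN(\mathcal{B})$ has four annuli over the branch locus.

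Finally I would account for the removal of the two basepoint disks. Since the diagram is reduced, $z$ and $w$ lie in the interiors of sectors; deleting a small open disk $d_z$ from the sector containing $z$ replaces $N(\mathcal{B})$ by $N(\mathcal{B})\smallsetminus(d_z\times I)$, i.e.\ drills out a trivial tube joining the two sides of $\Sigma$ near $z$, and likewise for $w$. Hence $M\smallsetminus\mathring N(\mathcal{B})=B_\alpha\cup B_\beta\cup(\text{tube at }z)\cup(\text{tube at }w)$, a solid torus. The $(1,1)$ knot is $K=a_\alpha\cup a_\beta$, with $a_\alpha\subset V_\alpha$ and $a_\beta\subset V_\beta$ boundary-parallel arcs joining $z$ to $w$ and missing $D_\alpha,D_\beta$; isotoping $a_\alpha$ into $B_\alpha$ and $a_\beta$ into $B_\beta$ rel $\{z,w\}$ exhibits $K$ as a core of this solid torus, so $N(\mathcal{B})\cong M\smallsetminus\mathring N(K)$. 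The deletion also adds exactly two vertical annuli, the fibers over the new boundary circles $\partial d_z$ and $\partial d_w$ of $\mathcal{B}$; with the four from the previous step this is the claimed $4+2=6$, with the stated origins. To see that all six annuli are meridional, push each slightly off $\partial N(\mathcal{B})=\partial N(K)$ into the complementary solid torus $N(K)=B_\alpha\cup B_\beta\cup(\text{tubes})$: it lands inside a $3$-ball, so its core is null-homotopic in $N(K)$, and a simple closed curve on the boundary of a solid torus is null-homotopic in the solid torus if and only if it is a meridian.

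I expect the main obstacle to be the second step: making the analysis of the co-orientation reversal rigorous --- keeping exact track of which way each sheet bends and of how the branch locus and its cusps reorganize near the two corners of $S$ and near the endpoints of the interior $\beta$-arcs --- so as to confirm both that the complementary region remains a single solid torus and that the vertical boundary ends up with exactly four (not two, not six) annuli over the branch locus. The remaining ingredients are routine Heegaard-splitting bookkeeping (the first and third steps) together with the short $\pi_1$ argument above.
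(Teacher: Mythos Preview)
Your three-step plan is sound, and Steps~1 and~3 are fine; in particular the observation that the complement of $N(\mathcal{B})$ is two balls joined by two tubes, with $K$ as a core, is exactly right. The problems are in Steps~2 and~4.

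In Step~2, your description of what the reversal does to the branch locus is not correct. The reversal does not simply produce ``two additional cusp annuli running along $\partial S=\alpha_0\cup\beta_0$''. Rather, the two original circles $\alpha,\beta$ get \emph{recombined} with pieces of $\partial S$ and with the interior $\beta$-arcs of $S$ into four new circles: $C_w$ (the boundary of the innermost bigon), $C_\alpha=\beta_0\cup(\alpha\smallsetminus\alpha_0)$, and two circles $C_z,C_\beta$ obtained by cutting the original $\beta$-circle at $S_0$ and regluing via short $\alpha$-arcs. None of these is $\partial S$. You correctly flag this step as the hard one, but your preliminary picture is misleading enough that it would not lead you to the right four circles without redoing the analysis from the local models of Figure~\ref{fig:revsector}.

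In Step~4 there is a genuine gap. The assertion ``a simple closed curve on the boundary of a solid torus is null-homotopic in the solid torus if and only if it is a meridian'' is false as stated: curves that are inessential on the boundary torus are also null-homotopic in the solid torus. Your push-off argument (which, granting your homeomorphism claim in Step~2, does correctly place each cusp core on the sphere $\partial B_\alpha$ or $\partial B_\beta$, hence null-homotopic in the ball) therefore only shows that each cusp core is null-homotopic in $N(K)$; you must still rule out triviality on $\partial N(K)$, which amounts to showing that each cusp core separates the two tube-holes on its sphere. That separation check is precisely what the paper's argument supplies: for each of $C_w,C_\alpha,C_z$ one exhibits an adjacent horizontal boundary component that is a once-punctured disk (punctured at $w$ or $z$), so the cusp core is parallel on $\partial N(K)$ to a basepoint circle and hence an honest meridian; $C_\beta$ is then handled by noting that its two adjacent horizontal components connect it to $C_z$ and $C_w$. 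So your shortcut does not bypass the combinatorial branch-locus analysis --- it still needs it, in disguised form.
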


\begin{proof}
  As a 2-complex, $\mathcal{B}$ is obtained by attaching compression disks along the $\alpha,\beta$ curves and puncturing the two basepoints. Hence by definition of (1,1) knots, $N(\mathcal{B})$ is homeomorphic to the knot complement. We now consider its vertical boundary. There are two vertical annuli from the two boundary circles, and these are clearly meridional. Now consider the cusps from the branch locus of $\mathcal{B}$. For simplicity of narration we think of these cusps as \textit{circles} on $\partial N(\mathcal{B})$ instead of \textit{annuli}.

  \begin{figure}[!hbt]
    \begin{overpic}[scale=0.4]{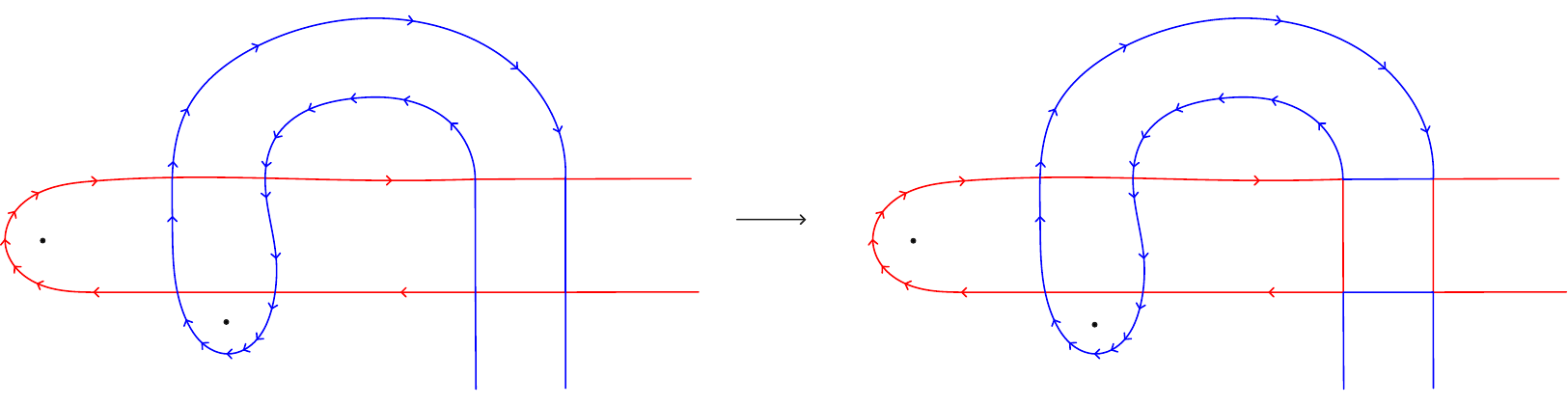}
      \put(32.1,9.8){\small $T$}
      \put(3.3,10){\small $w$}
      \put(59,10){\small $w$}
      \put(15,5){\small $z$}
      \put(70.3,5){\small $z$}
      \put(64,20){\small $C_z$}
      \put(59,4){\small $C_w$}
      \put(92,1){\small $C_{\beta}$}
      \put(95,15.5){\small $C_{\alpha}$}
    \end{overpic}
    \caption{Reversing the source sector $T$}
    \label{fig:typeIrev}
  \end{figure}

  \textbf{Case 1: $\mathcal{B}$ is type I}. Suppose we reversed a source sector $T$. This $T$ is necessarily an intersection of the $\alpha$- and $\beta$-source tubes, see Figure~\ref{fig:typeIrev}. Recall as in Figure~\ref{fig:revsector}, reversing $T$ would change the branch locus as immersed curves. After the reversing, we obtain 4 immersed curves as depicted in Figure~\ref{fig:typeIrev} right, which become the 4 cusps in $N(\mathcal{B})$.  
  
  We henceforth show that these 4 cusps are meridians. There are 2 cusps that come from $\beta$ away from $S$. One of them is a trivial curve on $\Sigma$, the disk bounded by which contains the basepoint $z$, and we call this cusp $C_z$. The other remains essential on $\Sigma$, and we call it $C_{\beta}$. See Figure~\ref{fig:typeIrev}. Similarly, we can denote the other 2 cusps $C_w$ and $C_{\alpha}$. Moreover, we denote the boundary circles at $z,w$ as $\partial_z,\partial_w$ respectively.
  
  There is a horizontal boundary component bounded by $\partial_z$ and $C_z$, which is actually the disk bounded by $C_z$ on the positive side of $\Sigma$ minus $z$. In particular it is an annulus. Since $\partial_z$ is a meridian, so is $C_z$. Similarly, we know $C_w$ is a meridian.

  There is a horizontal boundary component bounded by $C_z$ and $C_\beta$, consisting of one side of the $\beta$-disk and one side of $T$, which are joined along the boundary $\beta$-arcs of $T$; in particular it is an annulus. Since $C_z$ is a meridian, so is $C_{\beta}$. Similarly, we know $C_{\alpha}$ is also a meridian.

  It follows that in this case $\partial_v N(\mathcal{B})$ consists of 6 meridional annuli. On $\partial N(\mathcal{B})$ these annuli appear in the circular order $\partial_w,C_w,C_{\alpha},\partial_z,C_z,C_{\beta}$.

  \vspace{4pt}

  \textbf{Case 2: $\mathcal{B}$ is type II}. Suppose the branched surface is associated to $\alpha$. We first show that the branch locus of $\mathcal{B}$ consists of 4 immersed circles. As depicted in Figure~\ref{fig:reversing}, the boundary of the innermost bigon containing the basepoint $w$ becomes a circle branch locus, which we denote as $D_w$. Also, $\beta_0\cup (\alpha-\alpha_0)$ is another circle, and we denote it $D_{\alpha}$. The quadrilateral sector $S_0$ is in the $\beta$-source tube. After reversing the co-orientations, branch-locus-wise the boundary $\beta$-arcs of the $\beta$-source tube is cut along $S_0$, and re-joined by the boundary $\alpha$-arcs of $S_0$, see Figure~\ref{fig:reversing} and Figure~\ref{fig:sourcetube}. On the other hand, for the $\beta$-strands connected by $\beta_i$ ($1\leq i<n$) before the reversing, we notice that after the reversing they're connected by the concatenation of $\beta_{i+1}$ and two short $\alpha$-arcs from $\alpha_0$ instead (see the two strands connected by $\beta_1$ in Figure~\ref{fig:reversing} for an example). In particular the reversing does not change the topology of the branch locus here at $\beta_i$ ($1\leq i<n$). It follows that, after reversing the orientation of $S$, the original $\beta$ branch locus is separated into two branch locus circles, as depicted in Figure~\ref{fig:sourcetube}. We call the left circle in Figure~\ref{fig:sourcetube} $D_z$ and the right circle $D_{\beta}$. By far we have enumerated all the branch locus of $\mathcal{B}$, and there are indeed 4 (immersed) circles $D_w,D_{\alpha},D_z,D_{\beta}$. 

  \begin{figure}[!hbt]
    \begin{overpic}[scale=0.6]{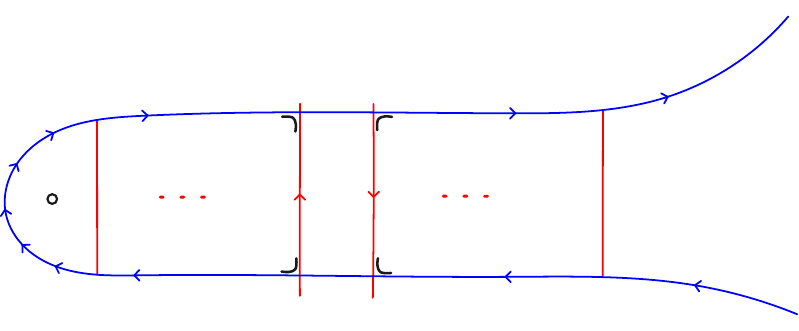}
      \put(40,16){$S_0$}
      \put(8,16){$z$}
    \end{overpic}
    \caption{The $\beta$-source tube and the reversed sector $S_0$}
    \label{fig:sourcetube}
  \end{figure}

  Now we prove that all these branch locus circles are meridians. We sill denote the two boundary circles $\partial_z, \partial_w$. There is a horizontal boundary component bounded by $\partial_w$ and $D_w$ - the punctured bigon; in particular it is an annulus. Since $\partial_w$ is a meridian, so in $D_w$.

  There is a horizontal boundary component bounded by $\partial_w$ and $D_{\alpha}$, consisting of $S-w$ on the negative side of $\Sigma$ and one side of the $\alpha$-disk, that are joined along $\alpha_0$ (again see the local pictures in Figure~\ref{fig:reversing}). In particular it is an annulus. Since $\partial_w$ is a meridian, so is $D_{\alpha}$.

  As depicted in Figure~\ref{fig:sourcetube}, $D_z$ bounds a disk on $\Sigma$ containing $z$. Moreover, this $\beta$-source tube does not intersect the interior of $S$. Hence there is a horizontal boundary component bounded by $D_z$ and $\partial_z$, namely this disk minus $z$ on the positive of $\Sigma$; in particular it is an annulus. Since $\partial_z$ is a meridian, so is $D_z$.

  For $D_{\beta}$, we show that there is a properly embedded arc on $\partial N(\mathcal{B})-\{D_z,D_w\}$, transverse to the cusps, connecting $D_z$ to $D_w$, while crossing $D_{\beta}$ in an odd number of times. If there is such an arc, then since both $D_z$ and $D_w$ are meridians, $N(\mathcal{B})$ is cut into 2 annuli by them, and this arc is an essential arc connecting two boundary components of an annuli. Now since the algebraic intersection number of $D_{\beta}$ and this arc is nontrivial, $D_{\beta}$ cannot be null-homotopic on the annulus. Since moreover it is a simple closed curve, it must be parallel to the boundary, thus a meridian.
  
  We can pick an arc from $z$ to $w$ on the positive side of $\Sigma$, while avoiding the $\beta$-curve. This arc would first travel along the $\beta$-source tube, then pass the polygon(s) and possibly some $\beta$-parallel sectors, and will finally enter the $\beta$-sink tube. Along the $\beta$-source tube, it will first cross $D_z$ once and then $D_{\beta}$ once (see Figure~\ref{fig:sourcetube}). Then it may pass some $\beta$-parallel sectors inside $S$, and will cross $D_{\beta}$ twice (at the boundary $\alpha$-arcs) each time it passes through such a sector (see the sector bounded by $\beta_1$ and $\beta_2$ in Figure~\ref{fig:reversing}). Finally, it will enter the sink tube and passes through $D_w$ once when entering the bigon. Restricting this arc between its intersections with $D_z$ and $D_w$ gives the arc we want.

  So far we have shown that $\partial_v N(\mathcal{B})$ consists of 6 meridional annuli. On $\partial N(\mathcal{B})$ these annuli appear in the circular order $\partial_w,D_w,D_{\beta},D_z,\partial_z,D_{\alpha}$.
\end{proof}

\subsection{From laminations to taut foliations}
\label{subsec:2.4}

In this subsection we prove the following proposition:

\begin{prop}
  Let $(\Sigma,\alpha,\beta,z,w)$ be an incoherent reduced (1,1) diagram representing some (1,1) non-L-space knot $K$. Let $\mathcal{B}$ be a (type I or II) modified Heegaard branched surface of the (1,1) diagram. If $\mathcal{B}$ fully carries a lamination, then $K$ is persistently foliar.
  \label{prop:lamitofoli}
\end{prop}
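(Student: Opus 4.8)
The plan is to keep the branched surface $\mathcal{B}$ and the lamination $\lambda$ it fully carries fixed, and to realize each non-meridional boundary slope by altering $\mathcal{B}$ only inside a collar of $\partial M$. Write $M$ for the exterior of $K$, $T=\partial M$, and identify $N(\mathcal{B})$ with $M$ via Lemma~\ref{lem:vB}. The first step is to verify --- this being the purpose of the co-orientation reversal and of puncturing the two basepoint sectors in Definition~\ref{defn:modifiedHgBS} --- that $\mathcal{B}$ is a laminar branched surface in the sense of~\cite{li2002laminar}: it is sink-disk-free and carries no disk, half-disk, sphere, or torus of contact (one uses here that $M$ is irreducible). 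Then $\lambda$ is essential, and, since $N(\mathcal{B})=M$ and $\lambda$ meets every $I$-fibre, the complementary regions of $\lambda$ in $M$ are $I$-bundles which, from the explicit shape of $\mathcal{B}$, are products. The meridian cannot be realized by a co-oriented taut foliation transverse to $T$ (meridional filling returns an L-space, hence has no co-oriented taut foliation), so all that remains is to realize every other slope.

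The next, and decisive, step is to read off the boundary train track $\tau$ that $\mathcal{B}$ cuts out on a push-off of $T$, together with the branch directions it inherits. By Lemma~\ref{lem:vB} the cusps of $\tau$ run along the six meridional annuli $\partial_w,C_w,C_\beta,C_z,\partial_z,C_\alpha$ of $\partial_v N(\mathcal{B})$. In the coherent (L-space) case these six cusps all open the same way and $\tau$ carries only the meridian; the co-orientation reversal on the sector $S$, available precisely because the diagram is incoherent, makes the branch direction along $\alpha_0$ point into the $\alpha$-disk and along $\beta_0$ into the $\beta$-disk, so the cusps of $\tau$ now open in both senses. The key claim, which I expect to be the main obstacle, is that this forces $\tau$ to carry the linear foliation of every slope on $T$ other than the meridian. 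Proving it amounts to pinning down the complete cyclic pattern of cusp directions of $\tau$ from the $\beta$-source-tube analysis in the proof of Lemma~\ref{lem:vB} and Figure~\ref{fig:sourcetube}, and then a finite combinatorial check that the resulting measured train track realizes precisely the non-meridional slopes.

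Granting this, the final step realizes an arbitrary non-meridional slope $\gamma$. Fix a collar $T\times[0,1]\subset M$ with $T\times\{1\}$ the boundary of $M$, isotoped so that $\mathcal{B}\cap(T\times[0,1])=\tau\times[0,1]$. Since $\tau$ carries $\gamma$, replace this product piece by an interpolating branched surface $W_\gamma\subset T\times[0,1]$ that restricts to $\tau$ on $T\times\{0\}$, meets $T\times\{1\}$ in a train track carrying only the slope $\gamma$, extends the branch directions and co-orientation of $\mathcal{B}$ along $\tau$, and introduces no new sink disk; this is the standard boundary manoeuvre of~\cite{delman2020taut} and~\cite{wu2012persistently}. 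Set $\mathcal{B}_\gamma:=(\mathcal{B}\setminus(\tau\times[0,1]))\cup W_\gamma$. It is again a laminar branched surface with $N(\mathcal{B}_\gamma)=M$, it is co-oriented, and, extending $\lambda$ across $W_\gamma$ (possible since $\tau$ carries $\gamma$), it fully carries a lamination $\lambda_\gamma$ whose complementary regions in $M$ are products and which meets $T$ in slope $\gamma$. Collapsing the product regions yields a co-oriented taut foliation $\mathcal{F}_\gamma$ of $M$ transverse to $T$ and restricting there to the linear foliation of slope $\gamma$. As $\gamma$ was arbitrary, every boundary slope of $M$ except the meridian is strongly realized by a co-oriented taut foliation, so $K$ is persistently foliar.
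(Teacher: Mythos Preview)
Your outline has two genuine gaps that the paper's argument avoids.

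First, you assert as a ``first step'' that $\mathcal{B}$ is laminar, in particular sink-disk-free. This is not part of the hypothesis and is in fact false in general: Section~\ref{subsec:3.1}, Case~3 exhibits a sink disk $S_0'$ in the modified Heegaard branched surface for a whole class of strongly almost coherent diagrams, and splittings are needed before one can appeal to Li's criterion. The proposition assumes only that $\mathcal{B}$ \emph{fully carries a lamination}; it does not assume, and one cannot deduce, that $\mathcal{B}$ is laminar. Consequently your route through ``$\lambda$ is essential by~\cite{li2002laminar}, hence collapsing products gives a taut foliation'' is not available: you have neither laminarity nor, independently, any argument for tautness of the resulting foliation.

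Second, your ``key claim'' --- that the boundary train track carries every non-meridional slope --- is stated but not proved. You correctly identify it as the main obstacle; the paper confronts exactly this issue, but from a different angle. Rather than analysing a boundary train track, the paper blows up $\lambda$ so that $\partial_h N(\mathcal{B})\subset\lambda$, foliates the complementary $I$-bundles, and observes that the resulting foliation $\mathcal{F}$ of $N(\mathcal{B})\cong M$ is \emph{regular} in the sense of Lemma~\ref{lem:cusp} with $4$ cusps (from $C_w,C_\alpha,C_z,C_\beta$) and $2$ boundaries (from $\partial_z,\partial_w$). The claim that every non-meridional slope is realized is then a consequence of the general Lemma~\ref{lem:cusp}: whenever $c\ge b+2$ one is persistently foliar, essentially by Gabai's spinning argument applied after eliminating boundary/cusp pairs. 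What makes this work is the specific count $4$ versus $2$; your remark that ``the cusps open in both senses'' is not by itself enough.

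Finally, the paper does not take tautness for granted. It proves it by exhibiting an explicit closed loop $\gamma$ in $N(\mathcal{B})$, built from $I$-fibres and arcs in $\partial_h N(\mathcal{B})$, that is positively transverse to $\mathcal{F}$ and meets every component of $\partial_h N(\mathcal{B})$; this uses the cyclic order $\partial_w,C_w,C_\beta,C_z,\partial_z,C_\alpha$ of the six meridional annuli and the position of the $\beta$-disk relative to $C_\beta$. Your proposal has no analogue of this step.
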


We make use of the following Lemma~\ref{lem:cusp}, which can be regarded as a generalization to the classical ``even meridional cusps'' argument:

\begin{figure}[!hbt]
  \begin{overpic}[scale=0.7]{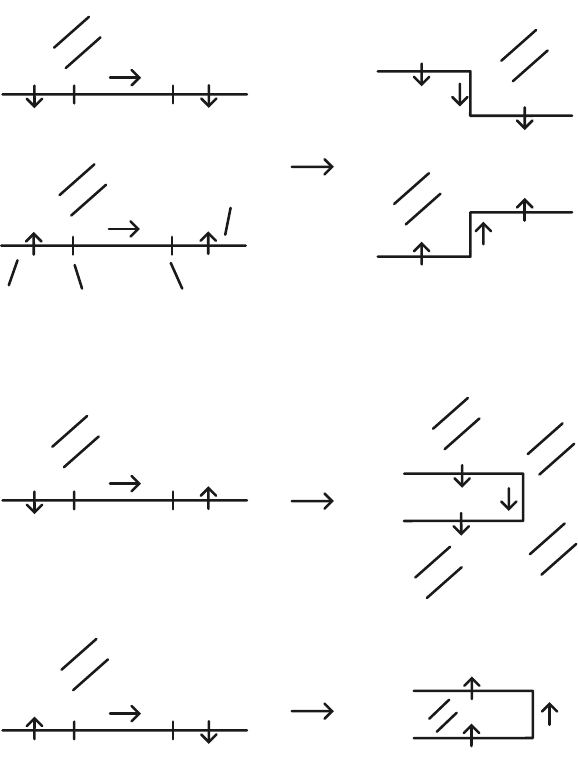}
    \put(14.5,83.5){$A_i$}
    \put(14.5,64){$A_i$}
    \put(14.5,30.5){$A_i$}
    \put(14.5,0.5){$A_i$}
    \put(62,86.5){$A_i$}
    \put(57,68.5){$A_i$}
    \put(69.5,33){$A_i$}
    \put(70,0.5){$A_i$}
    \put(2,94){$M$}
    \put(2,75){$M$}
    \put(2,43){$M$}
    \put(2,12){$M$}
    \put(-3,59){tail}
    \put(-10,56){annulus}
    \put(8,59){tail}
    \put(8,56){circle}
    \put(21,59){head circle}
    \put(23,77){head}
    \put(23,74){annulus}
    \put(60,93){$M$}
    \put(60,75){$M$}
    \put(30,50){$(a)$ stairs}
    \put(66,43){$M$}
    \put(31,25){$(b)$ cusp}
    \put(64,5){$M$}
    \put(29,-2){$(c)$ boundary}
  \end{overpic}
  \caption{Stairs, cusps, and boundaries}
  \label{fig:cusp_bdry}
\end{figure}

\begin{defn}
  Let $M$ be a \textit{knot manifold} (i.e. an irreducible, orientable, connected 3-manifold with boundary an incompressible torus), and $\mathcal{F}$ a co-oriented foliation of $M$. We call $\mathcal{F}$ \textbf{regular}, if there exist some parallel, disjoint annuli $A_1,...,A_{n}\subset \partial M$ ($n\geq 1$), such that $\mathcal{F}$ is transverse to these annuli, and tangent to $\partial M$ elsewhere.

  We can classify these transverse annuli into 3 classes, according to the co-orientations of the two tangent annuli next to them. Call the two boundary circles of $A_i$ the \textit{tail} and \textit{head} circles, such that the co-orientation of $\mathcal{F}$ along $A_i$ is pointing from the tail circle to the head circle. Call the tangent annuli attached to the tail circle the \textit{tail annulus} of $A_i$, and the tangent annuli attached to the head circle the \textit{head annulus} of $A_i$. See Figure~\ref{fig:cusp_bdry}.$(a)$.
  
  If the co-orientations of both tangent annuli next to $A_i$ point inwards or outwards, we call $A_i$ a \textbf{stair}. If the co-orientation of the tail annulus points outwards while the co-orientation of the head annulus points inwards, we call $A_i$ a \textbf{cusp}. If the co-orientation of the tail annulus points inwards while the co-orientation of the head annulus points outwards, we call $A_i$ a \textbf{boundary}. See Figure~\ref{fig:cusp_bdry}.
\end{defn}

\begin{lem}
  Let $M$ be a knot manifold. Suppose $\mathcal{F}$ is a co-oriented, regular, taut foliation of $M$ such that $\mathcal{F}$ has no disk leaves. Suppose there are no stairs. Let $b,c$ be the number of boundaries and cusps respectively. Then:
  \begin{itemize}
    \item If $c\geq b+2$, then $M$ is persistently foliar.
    \item If $c=b\geq 1$, then the slope represented by the transverse annuli is CTF-detected. 
  \end{itemize}
  \label{lem:cusp}
\end{lem}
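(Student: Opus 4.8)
The plan is a ``re\-foliate a collar'' argument. Attaching an external collar $T^{2}\times[0,1]$ along $\partial M$ (gluing $T^{2}\times\{0\}$ to $\partial M$) does not change the homeomorphism type of $M$, so it is enough to build, for each slope $\delta$ we want to realize, a co\-oriented taut foliation $\mathcal G_{\delta}$ of $T^{2}\times[0,1]$ that agrees along $T^{2}\times\{0\}$ with the boundary structure of $\mathcal F$ (transverse along annuli in the positions and co\-orientations of the $A_{i}$, tangent along the complementary annuli), is transverse to $T^{2}\times\{1\}$ in the linear foliation of slope $\delta$, and is such that $\mathcal F\cup\mathcal G_{\delta}$ has no Reeb component and no sphere or disk leaf. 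Gluing $\mathcal G_{\delta}$ to $\mathcal F$ then produces a co\-oriented taut foliation of $M$ strongly realizing $\delta$. So everything reduces to: which $\delta$ admit such a $\mathcal G_{\delta}$, and is the result taut.

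The construction of $\mathcal G_{\delta}$ is local over the annuli, along the lines of the classical two\-meridional\-cusps argument and the branched\-surface constructions of Delman--Roberts. Over each transverse annulus $A_{i}$ one inserts a product ``twisting block'' carrying a transverse foliation whose trace on the $T^{2}\times\{1\}$ side can be sheared by an arbitrary integer in the direction complementary to $\gamma$; over each tangent annulus one inserts an ``interpolating block'' turning tangency on the $T^{2}\times\{0\}$ side into transversality on the $T^{2}\times\{1\}$ side. The point, generalizing the role of the two cusps, is how much shear the interpolating blocks can absorb: around a cusp they absorb an unbounded amount in one rotational sense without creating a Reeb piece (a cusp is a ``reservoir''); a boundary instead must be paid for by one unit of shear (a ``deficit''); and a stair is rigid, shear cannot be passed across it, and a stair would trap a Reeb\-type block, which is precisely why the hypothesis forbids them. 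Equivalently one may package this as a cancellation move: with no stairs, a cusp and a boundary sharing a tangent annulus can be isotoped together and annihilated, lowering $(c,b)$ to $(c-1,b-1)$ while staying taut and disk\-leaf\-free.

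Granting this, the two conclusions follow by bookkeeping. When $c\ge b+2$, pair the $b$ boundaries with $b$ cusps so that each pair contributes no net shear, leaving $c-b\ge 2$ ``free'' reservoir cusps; these supply surplus shear, and the analysis of the achievable boundary slopes --- the slopes other than $\gamma$ forming two ``sides'' of $\gamma$ --- shows that with two or more free cusps every slope other than $\gamma$ is realized, so every slope except $\gamma$ (a meridian, in the intended applications, so that $\gamma$\-filling is an L\-space) is strongly realized and $M$ is persistently foliar. When $c=b\ge 1$, pair all cusps with all boundaries with no net shear; then $T^{2}\times\{1\}$ inherits the linear foliation of slope $\gamma$ itself, exhibiting $\gamma$ as CTF\-detected.

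I expect the tautness verification to be the main obstacle. After the collar surgery one must rule out Reeb components, dead\-end leaves, and disk or sphere leaves: the hypotheses are exactly what this needs --- ``$\mathcal F$ has no disk leaves'' prevents a compressible or spherical leaf from being created at the gluing locus or inside a twisting block, and ``no stairs'' removes the one local configuration (two like\-co\-oriented tangent leaves flanking a transverse annulus) that would force a Reeb annulus. Concretely I would verify tautness by extending closed transversals: those meeting $\mathcal F$ survive because the twisting and interpolating blocks are built transverse to a line field prolonging the one transverse to $\mathcal F$, and the new leaves inside the blocks are each pierced by a short transversal running out to $T^{2}\times\{1\}$ and back. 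A secondary point is that the $A_{i}$ may be co\-oriented in either sense around the boundary, so the block concatenation must be checked compatible for every sign pattern; once stairs are excluded this compatibility is automatic, which is the structural reason ``no stairs'' is assumed.
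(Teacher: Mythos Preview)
Your collar-refoliation strategy is in the right spirit, but the load-bearing assertions are not proved, and the paper takes a different, more explicit route.

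The paper argues by \emph{induction on $b$}. The base case $b=0$ is exactly the classical two-meridional-cusps argument (Gabai; Calegari, Example~4.22): blow up the leaves containing the tangent annuli, excise $(\text{tangent annulus})\times I$, and spin. For the inductive step one chooses a boundary $A_1$ adjacent to a cusp $A_2$ (always possible once $b,c\ge 1$ and there are no stairs, since going around $\partial M$ the string of cusp/boundary labels must have a transition) and then branches on the type of $A_3$. If $A_3$ is a boundary, one attaches a single $A\times I$ along $A_2$ with its $\partial A\times I$ glued to $B_1,B_2$; if $A_3$ is a cusp, one first blows up along $B_2,B_3$, attaches an $A\times I$ from the new $A_1$ to $A_3$, and then fills the remaining $A\times I$ hole via Gabai's Operation~2.4.4. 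In each case $(b,c)\mapsto(b-1,c-1)$, tautness is preserved, and no disk leaf appears because only nonpositive-Euler-characteristic pieces are attached. The $c=b$ case is handled by the same induction, terminating at $b=c=1$ where the final $A\times I$ attachment yields a foliation transverse to $\partial M$ with circle leaves of the given slope.

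The gap in your write-up is the ``shear accounting.'' The slogans that a cusp is a reservoir of unbounded shear while a boundary costs one unit are evocative but are neither defined nor established here; in particular you do not say how the interpolating and twisting blocks fit together across \emph{every} no-stair sign pattern around $\partial M$, nor why a boundary not adjacent to a cusp can still be paid for without producing a Reeb piece. Your parenthetical ``cancellation move'' is in fact the real mechanism, and the paper makes it the primary argument rather than an aside --- but note that it requires the two-subcase analysis on $A_3$ above, not merely ``a cusp and a boundary sharing a tangent annulus annihilate.'' If you promoted that move to the main line, carried out the $A_3$ case split with the explicit $A\times I$ attachments, and checked that blowing up plus attaching annular pieces preserves tautness and creates no disk leaves, your argument would converge to the paper's.
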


\begin{proof}
  First notice that $b+c$ is even since $\mathcal{F}$ is co-oriented. We suppose $c\geq b+2$ and do inductions on $b$. When $b=0$ this is essentially a classical argument by Gabai (see~\cite{gabai1992taut}, or~\cite{calegari2007foliations}, Example 4.22). We remark that the situation here can be understood as that we have already capped off all the Reeb annuli. Moreover, we can blow up the leaves containing the tangent annuli, and then remove the (tangent annuli)$\times I$ part, see Figure~\ref{fig:blowup}; this is equivalent to the blowing up operations needed in~\cite{calegari2007foliations}, Example 4.22. (In particular if this generates a new disk leaf, then locally we can find a disk leaf of the original foliation.) Hence the argument in~\cite{calegari2007foliations} goes through.

  \begin{figure}[!hbt]
    \begin{overpic}[scale=0.4]{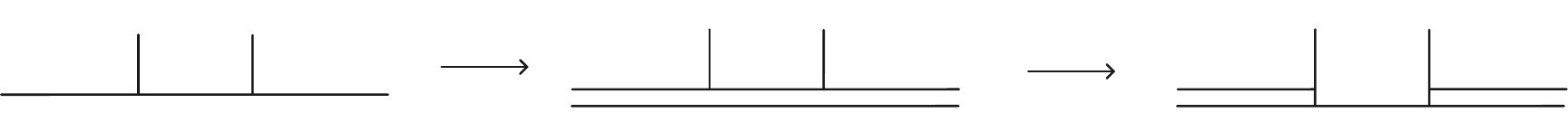}
      \put(0,-2){\small $M$}
      \put(5.5,4){\small $A_i$}
      \put(16.4,4){\small $A_{i+1}$}
      \put(11,-1){\small $B_i$}
      \put(26,5){\small blow up}
      \put(64,5){\small remove}
      \put(64,0){\small $B_i\times I$}
    \end{overpic}
    \caption{Blowing up leaves with annuli tangent to $\partial M$}
    \label{fig:blowup}
  \end{figure}

  Now suppose $b\geq 1$. Then $c\geq 1$. We let $A_1,...,A_{b+c}$ be all the transverse annuli on $\partial M$ in circular order. Let $B_i$ be the tangent annulus in between $A_i$ and $A_{i+1}$. We can pick a boundary that is next to a cusp. Suppose $A_1$ is a boundary and $A_2$ is a cusp. Consider $A_3$.

  If $A_3$ is a boundary, then we can attach an $A\times I$ ($A$ being an annulus) to the current foliation, such that $A_2$ is attached to $A\times \{0\}$, and $B_1,B_2$ are attached to $\partial A\times I$, see Figure~\ref{fig:elim_bdry}.$(a)$. We can then extend the foliation on $A_2$ through this $A\times I$ and get a new foliation $\mathcal{F'}$. $\mathcal{F'}$ is again co-oriented, taut, and regular, while both the number of boundaries and cusps decrease by 1. $\mathcal{F'}$ still has no disk leaves since we're just attaching surfaces with Euler characteristic $\leq 0$. Hence by induction $M$ is persistently foliar.

  \begin{figure}[!hbt]
    \begin{overpic}[scale=0.8]{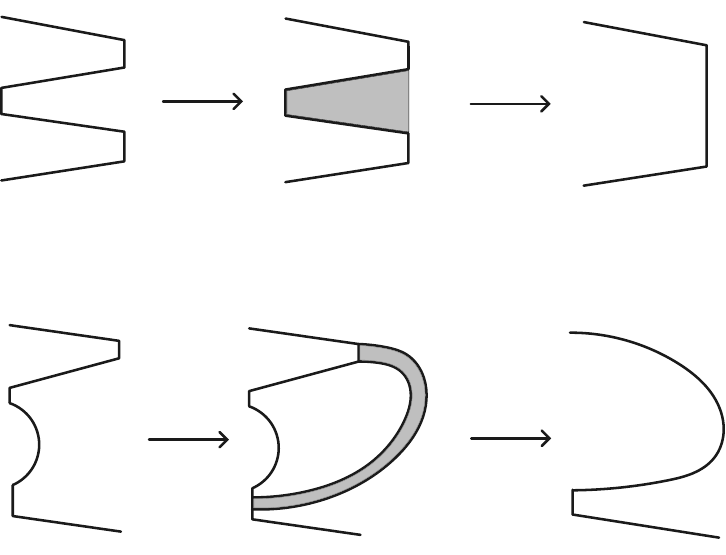}
      \put(-5,70){$M$}
      \put(13,68){$A_1$}
      \put(-5,61.5){$A_2$}
      \put(12.6,55){$A_3$}
      \put(5,66.5){$B_1$}
      \put(5,56.8){$B_2$}
      \put(18.5,65){\small attach $A\times I$}
      \put(30,45){$(a)$ $A_3$ is a boundary}
      \put(-5,28){$M$}
      \put(17,27){$A_1$}
      \put(-3,21){$A_2$}
      \put(-3,6){$A_3$}
      \put(0,14){$B_2$}
      \put(5,1){$B_3$}
      \put(19,18){\small blow up}
      \put(9.5,12){\small then attach $A\times I$}
      \put(64,18){\small fill in $A\times I$ hole}
      \put(33,-2){$(b)$ $A_3$ is a cusp}
    \end{overpic}
    \caption{Eliminating 1 boundary using 1 cusp}
    \label{fig:elim_bdry}
  \end{figure}

  Now suppose $A_3$ is a cusp. We can then blow up the leaves containing $B_2$ and $B_3$, and remove the $B_2\times I$ and $B_3\times I$ parts. We can then attach an $A\times I$ connecting the new $A_1$ to the original $A_3$, see Figure~\ref{fig:elim_bdry}.$(b)$. Since $\mathcal{F}$ has no disk leaves, after possibly blowing up leaves we can extend our foliation to this $A\times I$ by the classical argument of Gabai (\cite{gabai1992taut}, Operation 2.4.4). We can then fill in the thickened annulus hole connecting $A_2$ to $A_3$ (again by Gabai's argument). As a result, we get a new co-oriented, taut, regular foliation $\mathcal{F''}$, and again both the number of boundaries and cusps decrease by 1. Since we are just blowing up leaves and attaching $A\times I$, $\mathcal{F''}$ cannot have disk leaves. By induction $M$ is persistently foliar.

  The proof for the $b=c$ case is similar. We just note that when $b=c=1$, $A_3=A_1$ is a boundary, and after attaching and foliating the $A\times I$ we would get a foliation $\mathcal{F'}$ transverse to $\partial M$. In particular, $\partial A_1$ are circle leaves of $\mathcal{F'}|_{\partial M}$, hence $\mathcal{F'}$ CTF-detects the slope of $A_1$, as desired.
\end{proof}

\begin{rem}
  We suppose there are no stairs just for simplicity of narration. In general the stairs could be eliminated first using similar techniques.
\end{rem}

Now we can prove Proposition~\ref{prop:lamitofoli}.

\begin{proof}[Proof of Proposition~\ref{prop:lamitofoli}]
  Suppose $\mathcal{B}$ fully carries a lamination $\mathcal{L}$. After blowing up and isotoping $\mathcal{L}$ we may further assume $\partial_h N(\mathcal{B})\subset \mathcal{L}$. We can then foliate the trivial $I$-bundles of $N(\mathcal{B})|\mathcal{L}$ to obtain a foliation $\mathcal{F}$ of $N(\mathcal{B})$. $\mathcal{F}$ is co-oriented since $\mathcal{B}$ is. Moreover, $\mathcal{F}$ is regular with 4 cusps and 2 boundaries. $\mathcal{F}$ cannot have disk leaves, since the meridian of the knot complement cannot bound a disk. By Lemma~\ref{lem:cusp}, to show that $K$ is persistently foliar we only need to show that $\mathcal{F}$ is taut in $N(\mathcal{B})$.

  \vspace{6pt}

  \textbf{Claim.} There exists an oriented, closed loop $\gamma$ in $N(\mathcal{B})$ that consists of $I$-fibers of $N(\mathcal{B})$ and subarcs in $\partial_h N(\mathcal{B})$, such that 
  \begin{enumerate}
    \item the direction of the $I$-fibers agree with the co-orientation,
    \item the loop intersects every component of $\partial_h N(\mathcal{B})$. 
  \end{enumerate}

  \begin{proof}[Proof of the claim]
    We first notice that, if along $\partial N(\mathcal{B})$ some boundaries and cusps appear alternatively, then we can simply take $I$-fibers of these vertical annuli and connect them by arcs in $\partial_h N(\mathcal{B})$, and the resulting arc will agree with the co-orientation, see Figure~\ref{fig:elim_bdry}.$(a)$.
    
    \textbf{Case 1: $\mathcal{B}$ is type I}. Suppose again we reversed the sector $T$. We suppose the co-orientation is from the negative side of $\Sigma$ to the positive side of $\Sigma$ outside $T$. By the above observation, along the co-orientation we have arcs connecting $C_{\alpha}\rightarrow \partial_z\rightarrow C_z$ and $C_w\rightarrow \partial_w\rightarrow C_\beta$.
    
    Consider the reversed sector $T$. Here the co-orientation is from the positive side of $\Sigma$ to the negative side. It follows that, an $I$-fiber of $T$ together with the two sides of $T$ (as part of the horizontal boundary) would connect $C_z,C_{\beta}$ to $C_w,C_{\alpha}$. We then have the desired loop $\gamma$ as follows: \[C_{\alpha}\rightarrow \partial_z\rightarrow C_z\xrightarrow{T}C_w\rightarrow \partial_w\rightarrow C_\beta\xrightarrow{T}C_{\alpha}.\]

    Notice that this loop intersects every component of $\partial_h N(\mathcal{B})$ since it intersects every component of $\partial_v N(\mathcal{B})$.

    \vspace{4pt}
    
    \textbf{Case 2: $\mathcal{B}$ is type II}. Suppose again the branched surface is associated to $\alpha$. Here we can take $I$-fibers of $D_w$-$\partial_w$-$D_{\alpha}$-$\partial_z$-$D_z$ and connect them along $\partial_h N(\mathcal{B})$. As a result, we get a long arc connecting the two horizontal boundary components that are connected to $D_{\beta}$, and in particular this arc intersects every component of $\partial_h N(\mathcal{B})$.

    \begin{figure}[!hbt]
      \begin{overpic}{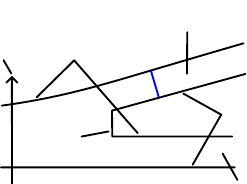}
        \put(-40,52){\small co-orientation}
        \put(23,54){\small $A_{\beta}^+$}
        \put(22,18){\small $D_{\beta}$}
        \put(70,63){\small $\beta$-disk}
        \put(91,28){\small $A_{\beta}^-$}
        \put(98,-4){\small $\Sigma$}
      \end{overpic}
      \caption{A local picture of $D_{\beta}$ in $N(\mathcal{B})$ away from $S_0$}
      \label{fig:Cbeta}
    \end{figure}

    We use $A_{\beta}^+,A_{\beta}^-$ to denote the two horizontal boundary components connected to $D_{\beta}$, and suppose $D_z$ is connected to $A_{\beta}^-$. Suppose along the co-orientation our long arc is going from $A_{\beta}^+$ to $A_{\beta}^-$. Then the $I$-fiber of $D_{\beta}$ along the co-orientation is also going from $A_{\beta}^+$ to $A_{\beta}^-$. However, an $I$-fiber through the $\beta$-disk also connects $A_{\beta}^+$ and $A_{\beta}^-$ (see Figure~\ref{fig:Cbeta}), and along the co-orientation is going in the opposite direction of the $I$-fiber of $D_{\beta}$, thus from $A_{\beta}^-$ to $A_{\beta}^+$ instead. We can then concatenate this $I$-fiber of the $\beta$ disk with our long arc, and get the desired loop $\gamma$.
  \end{proof}

  Now we use $\gamma$ to prove that our foliation $\mathcal{F}$ is taut. For any leaf $F$, we can find an $I$-fiber $I_1$ transverse to it. Suppose along the co-orientation $I_1$ goes from $H_1$ (a horizontal boundary component) to $H_2$ (another). We can then take a subarc of our loop $\gamma$ going from $H_2$ to $H_1$, and concatenate it with $I_1$ along subarcs of $H_1$ and $H_2$. Slightly perturbing the resulting loop near the horizontal boundary components we would get a transverse loop that intersects $F$. This proves the tautness of $\mathcal{F}$.
\end{proof}

\section{Reductions on diagrams}
\label{sec:3}

The rest of this paper is dedicated to proving the following proposition:

\begin{prop}
  Let $(\Sigma,\alpha,\beta,z,w)$ be a reduced, incoherent (1,1) diagram. Then there exists a (type I or II) modified Heegaard branched surface of the diagram that fully carries a lamination.
  \label{prop:lami!}
\end{prop}

We will use an inductive argument to prove Proposition~\ref{prop:lami!}. In this section~\ref{sec:3} we describe how we could reduce an incoherent (1,1) diagram to a simpler one, such that the reduction behaves well with the branched surfaces we defined. In sections~\ref{sec:4} and~\ref{sec:5} we deal with the two possible terminating cases of our reductions respectively. A proof of Proposition~\ref{prop:lami!} modulo the terminating cases is given at the end of this section.

\subsection{Carrying curve}
\label{subsec:3.1}

In this subsection we remind the readers of the carrying curve defined in~\cite{lyu2024knot}, subsection 3.2.

\begin{defn}[\cite{lyu2024knot}, Definition 3.5]
  Let $(\Sigma,\alpha,\beta,z,w)$ be a reduced, non-simple (1,1)-diagram. An essential curve $\gamma$ is said to carry $\alpha$ if 
  \begin{enumerate}[(i)]
      \item it is disjoint from and isotopic to $\alpha$ in $\Sigma$ (without the marked points $z,w$),
      \item it only intersects the $\alpha$-parallel $\beta$-arcs.
      \item $(\Sigma,\gamma,\beta,z,w)$ is also a reduced (1,1)-diagram.
  \end{enumerate}
  \label{def:carrying}
\end{defn}

\begin{prop}[\cite{lyu2024knot}, Proposition 3.7]
  Let $(\Sigma,\alpha,\beta,z,w)$ be a reduced, non-simple (1,1)-diagram. Then there exists $\gamma$ carrying $\alpha$ and it is unique up to isotopy in the twice-punctured torus $(\Sigma,z,w)$. In fact, this $\gamma$ passes through all $\alpha$-parallel sectors and intersects each $\alpha$-parallel $\beta$-arc exactly once, see Figure~\ref{fig:carrying}.
  \label{prop:carrying}
\end{prop}

\begin{figure}[!htb]
  \begin{overpic}[scale=0.47]{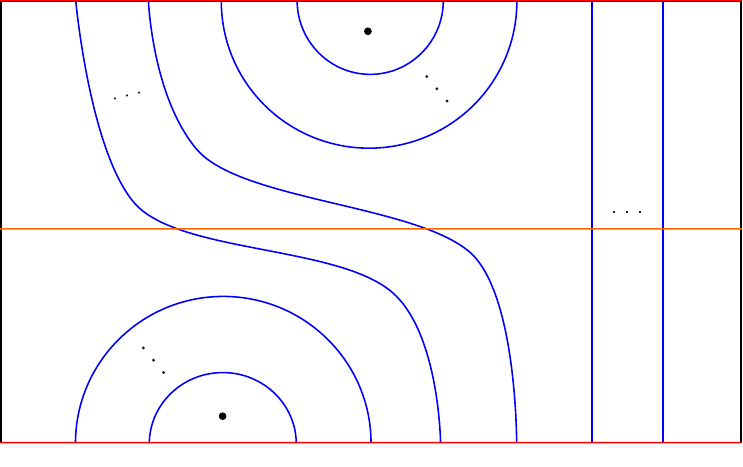}
      \put(95,34){$\color{or} \gamma$}
      \put(95,4){$\color{red} \alpha$}
      \put(91,50){$\color{ao} \beta$}
  \end{overpic}
  \caption{Carrying curve}
  \label{fig:carrying}
\end{figure}

By replacing $\alpha$ with its carrying curve $\gamma$, we get a ``simpler'' (1,1) diagram $(\Sigma,\gamma,\beta,z,w)$. More precisely, we observe the following lemma:

\begin{lem}
  Let $(\Sigma,\alpha,\beta,z,w)$ be a reduced, non-simple (1,1)-diagram. Let $\gamma$ be the carrying curve of $\alpha$. Then the geometric intersection number of $(\gamma,\beta)$ is strictly less than that of $(\alpha,\beta)$.
  \label{lem:simpler}
\end{lem}

\begin{proof}
  See Figure~\ref{fig:carrying}. In fact, if the $(\alpha,\beta)$-diagram is parametrized by the 4-tuple $(p,q,r,s)$ (see also Figure~\ref{fig:reduced11}), then the $(\alpha,\beta)$-intersection number is $p$, while the $(\gamma,\beta)$-intersection number is $(p-2q)$. Since the $(\alpha,\beta)$-diagram is non-simple, $q>0$. The lemma then follows.
\end{proof}

\subsection{Type I and type II reductions}
\label{subsec:3.2}

In this subsection we define type I and type II reductions. The diagrammatic behaviors for the two types are quite different, and we have different strategies towards them.

\begin{figure}[!hbt]
  \begin{overpic}{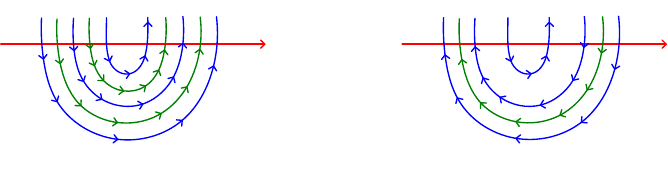}
    \put(40,18){\color{red} $\alpha$}
    \put(100,18){\color{red} $\alpha$}
    \put(31,10){\color{blue} $\beta$}
    \put(91,10){\color{blue} $\beta$}
    \put(28.3,24){\color{green} $\delta$}
    \put(88.3,24){\color{green} $\delta$}
    \put(5,0){$(a)$ Type I reduction}
    \put(65,0){$(b)$ Type II reduction}
  \end{overpic}
  \caption{Two types of reductions}
  \label{fig:reductions}
\end{figure}

\begin{defn}
  Let $(\Sigma,\alpha,\beta,z,w)$ be a reduced, incoherent (1,1) diagram. A \textit{reduction} on (1,1) diagrams is the replacement of one of $\alpha,\beta$ with its carrying curve. We denote $(\Sigma,\alpha,\beta,z,w) \rightarrow (\Sigma,\gamma,\beta,z,w)$ if we are replacing $\alpha$ with $\gamma$.

  Suppose $\delta$ is the carrying curve of $\beta$, and moreover suppose $(\Sigma,\alpha,\delta,z,w)$ is non-simple. The reduction $(\Sigma,\alpha,\beta,z,w) \rightarrow (\Sigma,\alpha,\delta,z,w)$ is called \textbf{type I} if there is only one $\beta$-arc in the interior of a $(\alpha,\delta)$-bigon (there are two such bigons, and an equal number of $\beta$-arcs in each bigon by the hyperelliptic involution symmetry), see Figure~\ref{fig:reductions}.$(a)$. The reduction is called \textbf{type II} if there are more than one $\beta$-arcs in the interior of a $(\alpha,\delta)$-bigon, see Figure~\ref{fig:reductions}.$(b)$.
  \label{defn:reductions}
\end{defn}

\vspace{3pt}

\begin{rem}
  In this definition we did not specify the orientation of $\delta$. This is a little complicated. In fact, since the orientation of $\alpha$ is given, there is only one orientation of $\delta$ that allows us to define modified Heegaard branched surfaces. \textbf{For type I reductions, we can always pick $\delta$ to be of the same orientation as $\beta$} (when regarded as curves on the torus $\Sigma$).

  This is not always the case for type II reductions, see Figure~\ref{fig:reductions}.$(b)$, where $\delta$ has the same orientation as $\beta$, but with the orientations of $(\alpha,\delta)$ we cannot define modified Heegaard branched surfaces. In fact, for type II reductions we do not pick the orientation of $\delta$ to define modified Heegaard branched surfaces for the $(\alpha,\delta)$-diagram; we will pick the opposite direction instead (again see Figure~\ref{fig:reductions}.$(b)$). The spoiler is that whenever there is a type II reduction, we can find a type I modified Heegaard branched surface of the original diagram that fully carries a lamination. This is done in section~\ref{sec:5}, and makes use of the branched surface in~\cite{lyu2024knot}, where the orientations of $(\alpha,\delta)$ need to be different from those defining modified Heegaard branched surfaces (see Definition~\ref{def:associated}).
  \label{rem:delta_ori}
\end{rem}

\subsection{Admissible reductions}
\label{subsec:3.3}

We can always do reductions on a non-simple (1,1) diagram to get a simpler diagram. However, not all such reductions behave well with our modified Heegaard branched surfaces. In this subsection we discuss when a reduction is ``good''. Due to the spoiler above in Remark~\ref{rem:delta_ori}, we can focus on type I reductions.

\begin{defn}
  Let $(\Sigma,\alpha,\beta,z,w)$ be a reduced, incoherent (1,1) diagram, and $\delta$ be the carrying curve of $\beta$, such that the reduction $(\Sigma,\alpha,\beta,z,w) \rightarrow (\Sigma,\alpha,\delta,z,w)$ is of type I. We say this reduction is \textbf{admissible} to a (type I or II) modified Heegaard branched surface $\mathcal{B}$ of $(\Sigma,\alpha,\beta,z,w)$, if the $\beta$-sink tube is disjoint from the reversed region of $\mathcal{B}$ (both as closed regions).
\end{defn}

\begin{lem}
  Suppose the reduction $(\Sigma,\alpha,\beta,z,w) \rightarrow (\Sigma,\alpha,\delta,z,w)$ is of type I and is admissible to a (type I or II) modified Heegaard branched surface $\mathcal{B}$ of $(\Sigma,\alpha,\beta,z,w)$. Then there is a (type I or II) modified Heegaard branched surface $\mathcal{C}$ of $(\Sigma,\alpha,\delta,z,w)$, such that if $\mathcal{C}$ fully carries a lamination, then $\mathcal{B}$ fully carries a lamination.
  \label{lem:admissible}
\end{lem}

\begin{figure}[!hbt]
  \begin{overpic}[scale=0.4]{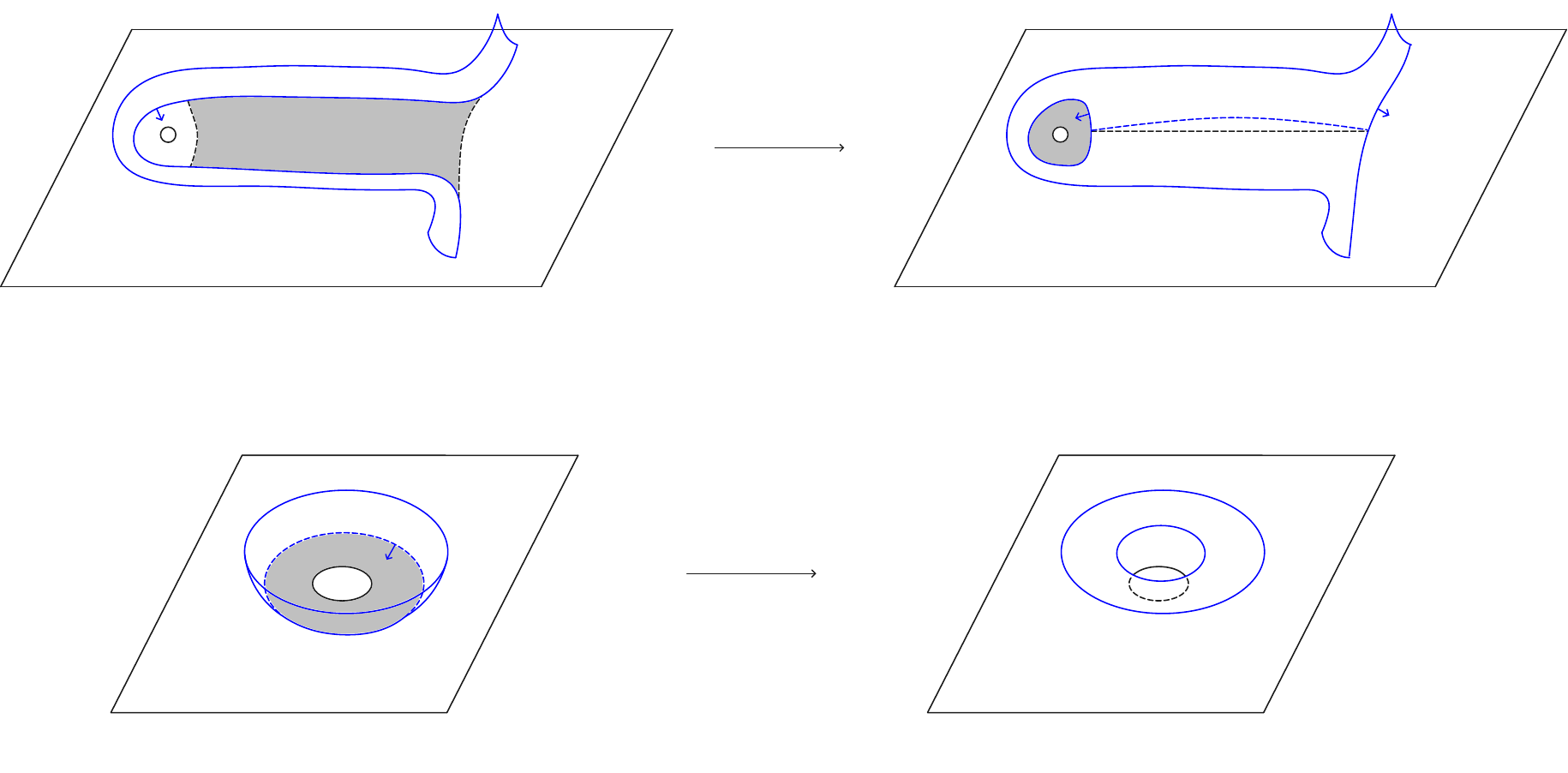}
    \put(8.7,40){\tiny $w$}
    \put(3,32){\Small $\Sigma$}
    \put(21.2,11.7){\tiny $w$}
    \put(10,5){\Small $\Sigma$}
    \put(31,26){$(a)$ Split the $\beta$-sink tube}
    \put(26,-2){$(b)$ Split the punctured disk at $w$}
  \end{overpic}
  \caption{Fully splitting the $\beta$-sink tube}
  \label{fig:full_split}
\end{figure}

\begin{proof}
  Since the sink tube is disjoint from the reversed region, we can fully split the $\beta$-sink tube as in Figure~\ref{fig:full_split}.$(a)$. More precisely, we split along the shaded disk in Figure~\ref{fig:full_split}.$(a)$ left, where the splitting is locally modelled on the right picture of Figure~\ref{fig:splittings}. The rest of the $\beta$-curve on $\Sigma$ contains 2 circles, 1 parallel to $\delta$ and 1 inside the $\beta$-sink bigon bounding a disk punctured at a basepoint. We can then further split along the shaded annulus as in Figure~\ref{fig:full_split}.$(b)$. Call the resulting branched surface $\mathcal{C'}$. Now the $\beta$-disk becomes an annulus, with one boundary circle attached to $\Sigma$ along a curve parallel to $\delta$. Call this curve $\beta_{\delta}$. 
  
  We can orient $\beta_{\delta}$ such that the branch direction points to the left (when observed from the positive side of $\Sigma$). Since the reduction $(\Sigma,\alpha,\beta,z,w) \rightarrow (\Sigma,\alpha,\delta,z,w)$ is of type I, this orientation of $\beta_{\delta}$ agrees with the orientation of $\delta$ (recall we pick the orientation of $\delta$ to be the same as $\beta$ for type I reductions). Hence $\mathcal{C'}$ actually can be regarded as a (type I or II) modified Heegaard branched surface of $(\Sigma,\alpha,\delta,z,w)$, with the $\delta$-disk further punctured. Denote the corresponding (type I or II) modified Heegaard branched surface of $(\Sigma,\alpha,\delta,z,w)$ as $\mathcal{C}$. It is clear that if $\mathcal{C}$ fully carries a lamination, then $\mathcal{C'}$ also fully carries a lamination. Moreover, since $\mathcal{C'}$ is obtained by doing splittings on $\mathcal{B}$, $\mathcal{B}$ fully carries a lamination if $\mathcal{C'}$ does. This $\mathcal{C}$ is what we want.
\end{proof}

\vspace{2pt}

\begin{rems}~\
  \begin{enumerate}
    \item According to the proof, this $\mathcal{C}$ is actually determined by $\mathcal{B}$ and the admissible reduction. We call this $\mathcal{C}$ \textbf{the reduced branched surface of $\mathcal{B}$ under the admissible reduction $(\Sigma,\alpha,\beta,z,w) \rightarrow (\Sigma,\alpha,\delta,z,w)$}.
    \item If the reduction is of type II, then $\beta_{\delta}$ does not necessarily have the same orientation as $\delta$. In fact, if the $(\alpha,\delta)$-bigon contains an even number of $\beta$-arcs, then $\beta_{\delta}$ and $\alpha$ would form sink/source bigons instead.
    \item Notice that in the proof we do not specify an arc in standard position. One should be convinced that for $\gamma$ carrying $\alpha$ and admissible reduction $(\Sigma,\alpha,\beta,z,w) \rightarrow (\Sigma,\gamma,\beta,z,w)$ (naturally here we require the $\alpha$-sink tube to be disjoint from the reversed region) this lemma still holds.
  \end{enumerate}
  \label{rems:admissible}
\end{rems}

The following lemma enables an inductive argument for type I reductions and type I branched surfaces.

\begin{lem}
  Let $(\Sigma,\alpha,\beta,z,w)$ be a reduced, incoherent (1,1) diagram, and $\delta$ be the carrying curve of $\beta$. Suppose the reduction $(\Sigma,\alpha,\beta,z,w) \rightarrow (\Sigma,\alpha,\delta,z,w)$ is of type I, and $(\Sigma,\alpha,\delta,z,w)$ is incoherent. Suppose moreover that there is a type I modified Heegaard branched surface of $(\Sigma,\alpha,\delta,z,w)$ that fully carries a lamination. Then there is a type I modified Heegaard branched surface of $(\Sigma,\alpha,\beta,z,w)$ that fully carries a lamination.
  \label{lem:typeIreds}
\end{lem}

\begin{proof}
  We show that any type I modified Heegaard branched surface of $(\Sigma,\alpha,\delta,z,w)$ is actually the reduced branched surface of a type I modified Heegaard branched surface of $(\Sigma,\alpha,\beta,z,w)$, under the admissible reduction $(\Sigma,\alpha,\beta,z,w) \rightarrow (\Sigma,\alpha,\delta,z,w)$. We notice that since the reduction is of type I, $\beta$ and $\delta$ are of the same orientation (as oriented curves on $\Sigma$).

  We can consider the $\delta$-source tube of the $(\alpha,\delta)$-diagram, and its position in the $(\alpha,\beta)$-diagram. Since the reduction $(\Sigma,\alpha,\beta,z,w) \rightarrow (\Sigma,\alpha,\delta,z,w)$ is type I, the $\delta$-source bigon (of the $(\alpha,\delta)$-diagram) contains a single arc of $\beta$. Now consider the $\delta$-source sector (of the $(\alpha,\delta)$-diagram) next to the $\delta$-source bigon. Since the $(\alpha,\beta)$-diagram is reduced, and $\beta$ is disjoint from $\delta$, $\beta$-arcs in this $\delta$-source sector are parallel arcs connecting the two boundary $\alpha$-arcs. Since the boundary $\alpha$-arc shared with the $\delta$-source bigon intersects $\beta$ twice, this $\delta$-source sector contains two $\beta$-arcs, and in particular the other boundary $\alpha$-arc of this sector also intersects $\beta$ twice. By an inductive argument along the $\delta$-source tube, we know every $\delta$-source sector contains two $\beta$-arcs inside. Since $\beta$ and $\delta$ are of the same orientation, the two $\beta$-arcs inside are the boundary $\beta$-arcs of a $\beta$-source sector. It follows that \textit{a part of} the $\beta$-source tube is nested in the $\delta$-source tube, see Figure~\ref{fig:typeItube}.

  \begin{figure}[!hbt]
    \begin{overpic}[scale=0.9]{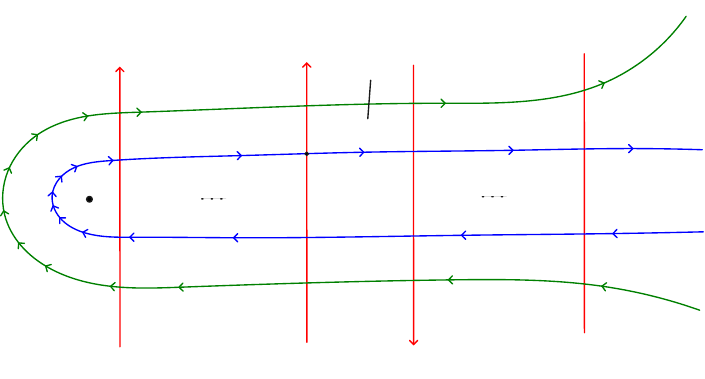}
      \put(10,26){$z$}
      \put(18,44){\color{red} $\alpha$}
      \put(97,17){\color{blue} $\beta$}
      \put(95,7){\color{green} $\delta$}
      \put(48,25){$B_D$}
      \put(52,43){$D$}
      \put(44,33){$X$}
    \end{overpic}
    \caption{Nested source tubes in a type I reduction}
    \label{fig:typeItube}
  \end{figure}

  Now for any type I modified Heegaard branched surface $\mathcal{B}_D$ reversing the source sector $D$ of the $(\alpha,\delta)$-diagram, we knot this sector must be $\delta$-source, and thus lies in the $\delta$-source tube. Then there is a $\beta$-source sector $B_D$ inside it, which is actually a source sector of the $(\alpha,\beta)$-diagram, see Figure~\ref{fig:typeItube}. We claim the type I modified Heegaard branched surface $\mathcal{B}_{B_D}$ of $(\Sigma,\alpha,\beta,z,w)$ reversing $B_D$ is what we want. We only need to check that the reduction is admissible for $\mathcal{B}_{B_D}$, for then $B_D$ is isotoped to $D$ when isotoping $\beta_{\delta}$ to $\delta$. 
  
  To show the reduction is admissible, it suffices to show the four vertices of $B_D$ are disjoint from the $\beta$-sink tube. Suppose a vertex $X$ is on the $\beta$-sink tube. Then there is a $\beta$-sink $\alpha$-arc of the $(\alpha,\beta)$-diagram with one endpoint $X$. But the two $\alpha$-arcs (of the $(\alpha,\beta)$-diagram) with one endpoint $X$ are either source (on the boundary of $B_D$), or parallel (intersecting $\delta$). This gives a contradiction. Hence $B_D$ is disjoint from the $\beta$-sink tube, and the reduction is admissible.
\end{proof}

On the other hand, the following lemma enables an inductive argument for type I reductions and type II branched surfaces. We put $\alpha$ in standard position and recall the notations $S$ and $\alpha_0$ we used in subsection~\ref{subsec:2.3} (see also Figure~\ref{fig:reversing} or Figure~\ref{fig:reducible}).

\begin{lem}
  Let $(\Sigma,\alpha,\beta,z,w)$ be a reduced, incoherent (1,1) diagram, and $\gamma$ be the carrying curve of $\alpha$. Suppose the $\beta$-arcs intersecting $\alpha_0$ on the other side of $S$ are all vertical arcs. Then the diagram $(\Sigma,\gamma,\beta,z,w)$ is incoherent and the reduction $(\Sigma,\alpha,\beta,z,w) \rightarrow (\Sigma,\gamma,\beta,z,w)$ is of type I. Moreover, if the type II modified Heegaard branched surface of $(\Sigma,\gamma,\beta,z,w)$ associated to $\gamma$ fully carries a lamination, then the type II modified Heegaard branched surface of $(\Sigma,\alpha,\beta,z,w)$ associated to $\alpha$ also fully carries a lamination.
  \label{lem:typeIIreds}
\end{lem}

\begin{figure}[!hbt]
  \begin{overpic}[scale=0.9]{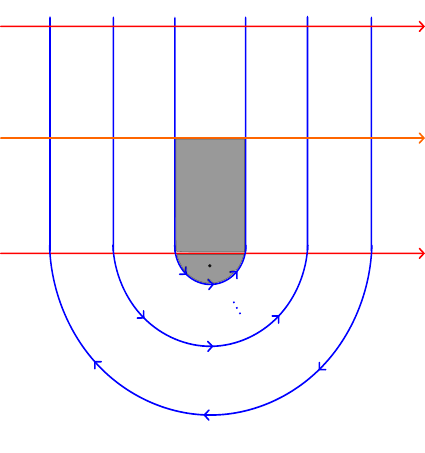}
    \put(19,10){$S$}
    \put(11.5,44){$\overbrace{\qquad\qquad\qquad\qquad\qquad\qquad\quad\;\;\:}$}
    \put(44,49){\color{red} $\alpha_0$}
    \put(93,65){\color{or} $\gamma$}
    \put(93,90){\color{red} $\alpha$}
    \put(68,12){\color{blue} $\beta_0$}
    \put(58,22){\color{blue} $\beta_n$}
    \put(53,36){\color{blue} $\beta_1$}
    \put(47,40){\Small $w$}
  \end{overpic}
  \caption{Admissible reductions for type II branched surfaces}
  \label{fig:reducible}
\end{figure}

\begin{proof}
  We recall that all vertical $\beta$-arcs are $\alpha$-parallel, hence $\gamma$ would pass through all these arcs, see Figure~\ref{fig:reducible}. In particular, the diagram $(\Sigma,\gamma,\beta,z,w)$ is incoherent. Moreover, the shaded $(\gamma,\beta)$-bigon contains only 1 $\alpha$-arc in the interior, indicating that the reduction $(\Sigma,\alpha,\beta,z,w) \rightarrow (\Sigma,\gamma,\beta,z,w)$ is type I.

  Now consider the position of the $\alpha$-sink tube. Recall it consists of all the $\alpha$-sink quadrilaterals. By definition the boundary $\beta$-arcs of these quadrilaterals are $\alpha$-sink. Now that $\alpha$ is in standard position, the $\alpha$-sink $\beta$-arcs are rainbow arcs on the side of $\alpha$ different from $S$. In particular the $\alpha$-sink tube is disjoint from $S$ here, which only intersects vertical $\beta$-arcs on the other side of $\alpha_0$.

  It follows that the reduction $(\Sigma,\alpha,\beta,z,w) \rightarrow (\Sigma,\gamma,\beta,z,w)$ is admissible to $\mathcal{B}_{\alpha}$ the type II branched surface associated to $\alpha$. Moreover, as can be seen in Figure~\ref{fig:reducible}, the reduced branched surface of $\mathcal{B}_{\alpha}$ is exactly the type II branched surface of $(\Sigma,\gamma,\beta,z,w)$ associated to $\gamma$. The lemma then follows from Lemma~\ref{lem:admissible}.
\end{proof}

\subsection{Two terminating cases}
\label{subsec:3.4}

In this subsection we describe how we are going to reduce the diagram, and introduce the two terminating cases.

For simplicity of narration we say an arc is \textbf{connected to} a region (this ``region'' could also be an arc) if it is not contained in the region but intersects the region. For example, the vertical $\beta$-arcs in Lemma~\ref{lem:typeIIreds} are \textit{connected to} $S$ (see also Figure~\ref{fig:reducible}).

Suppose we have a reduced, incoherent (1,1) diagram $(\Sigma,\alpha,\beta,z,w)$, with $\alpha$ in standard position. Whenever all $\beta$-arcs connected to $S$ are vertical, by Lemma~\ref{lem:typeIIreds} we can reasonably replace $\alpha$ by its carrying curve $\gamma$ and get a new, simpler diagram. Now suppose not all $\beta$-arcs connected to $\gamma$ are vertical. We then further consider the two $\beta$-arcs connected to $\beta_1$ (i.e. the boundary $\beta$-arc of the $\beta$-sink bigon, see Figure~\ref{fig:reducible}). They are either both vertical arcs or both rainbow arcs, for otherwise the $\beta$-sink bigon would intersect some hexagon or octagon region, indicating that the $\beta$-sink tube is empty and that the diagram is coherent.

\vspace{6pt}

\textbf{Case 1: the $\beta$-arcs connected to $\beta_1$ are vertical arcs.} In this case we call the diagram \textit{monotone}. Formally we have the following definition:

\begin{defn}
  Let $(\Sigma,\alpha,\beta,z,w)$ be a reduced, incoherent (1,1) diagram, with $\alpha$ in standard position. Recall the region $S$ in subsection~\ref{subsec:2.3}. This diagram is called \textbf{$\alpha$-monotone} if
  \begin{enumerate}
    \item the $\beta$-arcs connected to the $\beta$-sink bigon are vertical, and
    \item not all $\beta$-arcs connected to $S$ are vertical.
  \end{enumerate}
  \label{defn:monotone}
\end{defn}

Examples of monotone diagrams include many strongly almost coherent diagrams (as defined in~\cite{binns20231}). In particular the following diagram $(13,4,1,7)$ (see Figure~\ref{fig:13_4_1_7}) is monotone and strongly almost coherent, where the $\beta$-arcs are connected to $S$ at the nodes $10,11,12,13,1,2$:

\begin{figure}[!hbt]
  \begin{overpic}{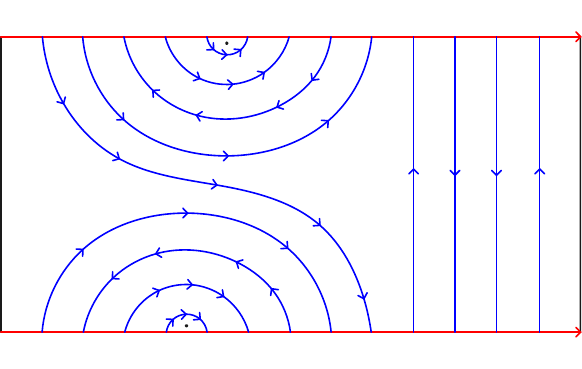}
    \put(6.5,2){1}
    \put(13.2,2){2}
    \put(20.1,2){3}
    \put(27,2){4}
    \put(34.5,2){5}
    \put(41.7,2){6}
    \put(49,2){7}
    \put(56.1,2){8}
    \put(63,2){9}
    \put(69,2){10}
    \put(76,2){11}
    \put(83.3,2){12}
    \put(90.5,2){13}
    \put(6.5,58.5){8}
    \put(13.2,58.5){9}
    \put(19,58.5){10}
    \put(26,58.5){11}
    \put(33,58.5){12}
    \put(40.2,58.5){13}
    \put(49,58.5){1}
    \put(56,58.5){2}
    \put(63,58.5){3}
    \put(70,58.5){4}
    \put(77,58.5){5}
    \put(84.2,58.5){6}
    \put(91.4,58.5){7}
    \put(98,2){\color{red} $\alpha$}
    \put(60,20){\color{blue} $\beta$}
    \put(30,6.7){\tiny $z$}
    \put(39.6,55.3){\tiny $w$}
  \end{overpic}
  \caption{The (1,1) diagram $(13,4,1,7)$}
  \label{fig:13_4_1_7}
\end{figure}

We will eventually prove the following proposition in Section~\ref{sec:4}:

\begin{prop}
  Let $(\Sigma,\alpha,\beta,z,w)$ be a reduced, incoherent (1,1) diagram, with $\alpha$ in standard position. Suppose this diagram is $\alpha$-monotone. Then the type II modified Heegaard branched surface associated to $\alpha$ of this diagram fully carries a lamination. 
  \label{prop:monotone}
\end{prop}

\begin{rem}
  It is necessary for us to use type II modified Heegaard branched surfaces here. In fact, for the diagram in Figure~\ref{fig:13_4_1_7}, the type I modified Heegaard branched surface (there is only one!) admits a twisted disk of contact, thus never fully carries a lamination. 
\end{rem}

\vspace{6pt}

\textbf{Case 2: the $\beta$-arcs connected to $\beta_1$ are rainbow arcs.} In this case we call the diagram \textit{strongly incoherent}. More precisely we have the following definition:

\begin{defn}
  Let $(\Sigma,\alpha,\beta,z,w)$ be a reduced, incoherent (1,1) diagram. The diagram is called \textbf{$\alpha$-strongly incoherent}, if when we put $\alpha$ in standard position, the rainbow arcs next to the innermost rainbow arcs are inconsistent with the innermost rainbow arcs.
  \label{defn:strongly_incoherent}
\end{defn}

\vspace{1pt}

\begin{rems}~\
  \begin{enumerate}
    \item In our case where the $\beta$-arcs connected to $\beta_1$ are rainbow arcs, the diagram is actually $\beta$-strongly incoherent.
    \item Suppose $\alpha$ is in standard position and recall the region $S$ we defined in subsection~\ref{subsec:2.3}. Then the diagram is $\alpha$-strongly incoherent if and only if there is a single $\beta$-arc $\beta_1$ in the interior of $S$.
    \item Suppose the reduction $(\Sigma,\alpha,\beta,z,w) \rightarrow (\Sigma,\alpha,\delta,z,w)$ is of type II. Then $(\Sigma,\alpha,\beta,z,w)$ is $\alpha$-strongly incoherent. 
  \end{enumerate}
\end{rems}

We will prove the following proposition in Section~\ref{sec:5}:

\begin{prop}
  Let $(\Sigma,\alpha,\beta,z,w)$ be a reduced, ($\alpha$- or $\beta$-)strongly incoherent (1,1) diagram. Then there exists a type I modified Heegaard branched surface of the diagram that fully carries a lamination.
  \label{prop:strongly_incoherent}
\end{prop}

\subsection{The actual reduction procedure}
\label{subsec:3.5}

We can now give a proof of Proposition~\ref{prop:lami!}, modulo Proposition~\ref{prop:monotone} and Proposition~\ref{prop:strongly_incoherent} above.

\begin{proof}[Proof of Proposition~\ref{prop:lami!}]
  We can place $\alpha$ in standard position. Again recall the notations in subsection~\ref{subsec:2.3}. If all $\beta$-arcs connected to $S$ are vertical, we can replace $\alpha$ with its carrying curve $\gamma$ and get a new (1,1) diagram $(\Sigma,\gamma,\beta,z,w)$. Now by Lemma~\ref{lem:typeIIreds}, this new diagram $(\Sigma,\gamma,\beta,z,w)$ is incoherent and the reduction $(\Sigma,\alpha,\beta,z,w) \rightarrow (\Sigma,\gamma,\beta,z,w)$ is of type I. We can then put $\gamma=\gamma_1$ in standard position and and think of the $\beta$-arcs connected to the new ``$S$'' similarly defined in the diagram $(\Sigma,\gamma_1,\beta,z,w)$. If all these $\beta$-arcs are still vertical, then we can replace $\gamma_1$ by its carrying curve $\gamma_2$ and consider the new diagram $(\Sigma,\gamma_2,\beta,z,w)$. Again by Lemma~\ref{lem:typeIIreds}, this new diagram is still incoherent, and $(\Sigma,\gamma_1,\beta,z,w)\rightarrow (\Sigma,\gamma_2,\beta,z,w)$ is of type I.

  We can repeat the procedure whenever possible. Since the (1,1) diagrams are non-simple (actually \textit{incoherent}), by Lemma~\ref{lem:simpler} this procedure must stop after finitely many steps. Suppose it stops at some incoherent diagram $(\Sigma,\gamma_n,\beta,z,w)$. Then this diagram is either $\gamma_n$-monotone or $\beta$-strongly incoherent (see discussions in subsection~\ref{subsec:3.4}). 
  
  Suppose first it is $\gamma_n$-monotone. Then by Proposition~\ref{prop:monotone} the type II branched surface associated to $\gamma_n$ fully carries a lamination. Then we can apply Lemma~\ref{lem:typeIIreds} to the reduction $(\Sigma,\gamma_{n-1},\beta,z,w)\rightarrow(\Sigma,\gamma_n,\beta,z,w)$ and conclude that the type II branched surface of the diagram $(\Sigma,\gamma_{n-1},\beta,z,w)$ associated to $\gamma_{n-1}$ fully carries a lamination. By induction, we know actually the type II branched surface of the diagram $(\Sigma,\gamma_k,\beta,z,w)$ associated to $\gamma_k$ fully carries a lamination for all $k$, and finally the type II branched surface of $(\Sigma,\alpha,\beta,z,w)$ associated to $\alpha$ fully carries a lamination.
  
  Now suppose the diagram $(\Sigma,\gamma_n,\beta,z,w)$ is $\beta$-strongly incoherent. Then by Proposition~\ref{prop:strongly_incoherent} there exists some type I branched surface of $(\Sigma,\gamma_n,\beta,z,w)$ fully carrying a lamination. We can then repeatedly apply Lemma~\ref{lem:typeIreds} backwards along our reductions of diagrams (notice all the reductions are of type I), and finally we get a type I branched surface of the diagram $(\Sigma,\alpha,\beta,z,w)$ fully carrying a lamination. This concludes the proof.
\end{proof}

Now Theorem~\ref{thm:main} immediately follows:

\begin{proof}[Proof of Theorem~\ref{thm:main}]
  By Theorem~\ref{thm:11L}, the (1,1) non-L-space knot is represented by some reduced, incoherent (1,1) diagram. By Proposition~\ref{prop:lami!}, there exists a type I or II modified Heegaard branched surface of the diagram that fully carries a lamination. Then by Proposition~\ref{prop:lamitofoli}, this implies that the knot is persistently foliar.
\end{proof}

\section{Monotone diagrams}
\label{sec:4}

In this section we prove Proposition~\ref{prop:monotone}. We will first briefly analyze the diagram and then describe a strategy to split the branched surface to eliminate sink disks.

\subsection{Diagrammatic analysis}
\label{subsec:4.1}

In this subsection we prepare some lemmas by analyzing the diagrams. Without loss of generality we assume the $\alpha$-curve is in standard position and pointing to the right. Then (for there to exist modified Heegaard branched surfaces) the innermost rainbow $\beta$-arcs are also pointing to the right. We call the rainbow $\beta$-arcs around $z$ the \textit{bottom rainbow arcs} and the rainbow $\beta$-arcs around $w$ the \textit{top rainbow arcs}.

Recall the set of notations $S,\alpha_0,\beta_0,\beta_1,...,\beta_n$ before Definition~\ref{defn:typeIBS} (see Figure~\ref{fig:reversing}). We also denote the image of $S$ under hyperelliptic involution as $S'$, the image of $\alpha_0$ as $\alpha_0'$, and the images of $\beta_0,\beta_1,...,\beta_n$ as $\beta_0',\beta_1',...,\beta_n'$ respectively.

\begin{lem}
  The interiors of $\alpha_0$ and $\alpha_0'$ are disjoint.
  \label{lem:alp0disjnt}
\end{lem}

\begin{proof}
  Suppose not. Then some $\beta_i$ ($i\geq 1$) is connected to $\beta_0'$. But since this $\beta_i$ and $\beta_0'$ are in different directions, they are connected at the same side (left or right) of ends. This forces the ends $\beta_1$ to be on $\alpha_0'$. In particular, $\beta_1$ is then connected to rainbow arcs, giving a contradiction.
\end{proof}

\begin{lem}
  $\beta_0$ is connected to a rainbow arc at exactly one end.
  \label{lem:OneEndRainbow}
\end{lem}

\begin{proof}
  By Definition~\ref{defn:monotone} $\beta_0$ is connected to a rainbow arc. Now suppose $\beta_0$ is connected to rainbow arcs at both ends. Then since $\beta_1$ is connected to vertical arcs, these rainbow arcs must be bottom rainbow arcs pointing left. Moreover, since $\beta_0$ is inconsistent with $\beta_1,...,\beta_n$, $\beta_0$ is connected to the \textit{outermost} bottom rainbow arc pointing left at both ends. But then $\beta_0$ and this arc forms a closed curve, leading to a contradiction. Hence $\beta_0$ is connected to a $\beta$-rainbow arc at exactly one end.
\end{proof}

Now we know $\beta_0$ is connected to a rainbow arc at either the left end or the right end. The following lemma indicates that we can assume $\beta_0$ is connected to a rainbow arc at the right end, without loss of generality.

\begin{lem}
  Let $(\Sigma,\alpha,\beta,z,w)$ be a monotone (1,1) diagram with $\alpha$ in standard position, where $\beta_0$ is connected to a rainbow arc at the left end. Suppose the diagram is parametrized by $(p,q,r,s)$ using the notations in~\cite{rasmussen2005knot}. Then the (1,1) diagram parametrized by $(p,q,p-2q-r,2q-s)$, which we denote as $(\tilde{\Sigma},\tilde{\alpha},\tilde{\beta},\tilde{z},\tilde{w})$ with $\tilde{\alpha}$ in standard position, is also monotone, and $\tilde{\beta}_0$ is connected to a rainbow arc at the right end. Moreover, if the type II branched surface of $(\tilde{\Sigma},\tilde{\alpha},\tilde{\beta},\tilde{z},\tilde{w})$ associated to $\tilde{\alpha}$ fully carries a lamination, then the type II branched surface of $(\Sigma,\alpha,\beta,z,w)$ associated to $\alpha$ also fully carries a lamination.
\end{lem}

\begin{proof}
  In fact, the diagram $(p,q,p-2q-r,2q-s)$ can be obtained by taking a left-right reflection of the original diagram $(p,q,r,s)$, and then reversing the orientations of the curves $\alpha,\beta$. For example, by taking reflection of the diagram $(13,4,1,7)$ and reversing the curves we get the diagram $(13,4,4,1)$ (see Figure~\ref{fig:13_4_4_6}; we remark that here we need to further move the bottom to put the diagram in the normal form as in Figure~\ref{fig:reduced11}). In particular, the diagram $(p,q,p-2q-r,2q-s)$ is monotone, and $\tilde{\beta}_0$ is connected to a rainbow arc at the right end instead.

  \begin{figure}[!hbt]
    \begin{overpic}{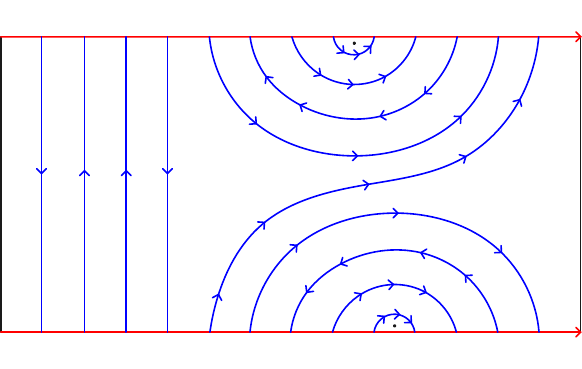}
      \put(6.5,2){9}
      \put(13,2){10}
      \put(20,2){11}
      \put(27,2){12}
      \put(34.4,2){13}
      \put(42,2){1}
      \put(49,2){2}
      \put(56.1,2){3}
      \put(63,2){4}
      \put(70,2){5}
      \put(77,2){6}
      \put(84.4,2){7}
      \put(91.5,2){8}
      \put(6.5,58.5){2}
      \put(13.6,58.5){3}
      \put(20.5,58.5){4}
      \put(27.6,58.5){5}
      \put(34.5,58.5){6}
      \put(41.5,58.5){7}
      \put(49,58.5){8}
      \put(56,58.5){9}
      \put(63,58.5){10}
      \put(70,58.5){11}
      \put(77,58.5){12}
      \put(84.2,58.5){13}
      \put(91.4,58.5){1}
      \put(98,2){\color{red} $\alpha$}
      \put(60,20){\color{blue} $\beta$}
      \put(68,6.7){\tiny $z$}
      \put(58.5,55.3){\tiny $w$}
    \end{overpic}
    \caption{The (1,1) diagram (13,4,4,1)}
    \label{fig:13_4_4_6}
  \end{figure}

  On the level of branched surfaces, we can think of the type II branched surface of $(p,q,r,s)$ (which we denote as $\mathcal{B}$) as embedded in the ambient space (recall Remarks~\ref{rems:brsfs}.$(1)$). Now the left-right reflection of the Heegaard torus $\Sigma$ would induce an orientation-reversing homeomorphism of the ambient spaces. Let $\mathcal{B'}$ be the image of $\mathcal{B}$, then $\mathcal{B'}$ can be described by the (1,1) diagram after reflection. Since the homeomorphism is orientation-reversing, here the branch directions on $\Sigma$ actually point to the right of the oriented curves instead. We can then reverse the orientations of the curves to get the usual description we used in subsection~\ref{subsec:2.3}. In particular, this coincides with the definition of the type II branched surface of the diagram $(p,q,p-2q-r,2q-s)$ (which we denote as $\tilde{\mathcal{B}}$). It follows that there is an (orientation-reversing) homeomorphism between $\mathcal{B}$ and $\tilde{\mathcal{B}}$, hence if one fully carries a lamination, then the other also does.
\end{proof}

From now on for the rest of this section, we always assume $\beta_0$ is connected to a rainbow arc at the right end. Then the local pictures of $S$ and $S'$ are depicted as in Figure~\ref{fig:SnSp_monotone}. We remark that the intermediate $\beta$-arcs may or may not exist: there might be only one $\beta$-arc in the interior of $S$ (i.e. $n=1$).

\begin{figure}[!hbt]
  \begin{overpic}[scale=0.7]{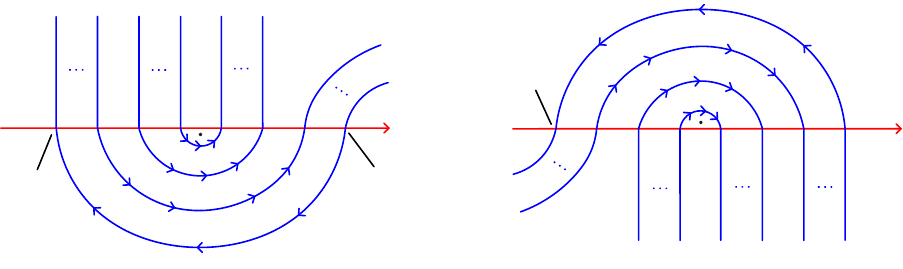}
    \put(7,3){$S$}
    \put(21,16){\tiny $w$}
    \put(30,1.7){\Small \color{blue} $\beta_0$}
    \put(24,13){\Small \color{blue} $\beta_1$}
    \put(28,6){\Small \color{blue} $\beta_n$}
    \put(63,25){\Small \color{blue} $\beta_0'$}
    \put(77,14){\tiny $z$}
    \put(90,26){$S'$}
    \put(-2,9.2){\tiny left end}
    \put(-2,7.5){\tiny of $\alpha_0$}
    \put(38.5,9){\tiny right end of $\alpha_0$}
    \put(47,21){\tiny left end of $\alpha_0'$}
    \put(44,15){\Small \color{red}$\cdot\cdot\cdot\;\alpha_1\cdot\cdot\;\cdot$}
  \end{overpic}
  \caption{Local pictures of $S$ and $S'$}
  \label{fig:SnSp_monotone}
\end{figure}

From the local picture we know the outermost bottom rainbow arc is connected to $S$ at its left end. Also the outermost top rainbow arc is connected to $S'$ at its right end. Let $\alpha_1$ be the (oriented) subarc of $\alpha$ that starts at the right end of $\alpha_0$ and ends at the left end of $\alpha_0'$, see Figure~\ref{fig:SnSp_monotone}. We remark that $\alpha_1$ could be empty (e.g. see the $(13,4,1,7)$ diagram in Figure~\ref{fig:13_4_1_7}). We also denote the (oriented) subarc of $\alpha$ that starts at the right end of $\alpha_0'$ and ends at the left end of $\alpha_0$ as $\alpha_2$. $\alpha_2$ is always non-empty. Moreover, $\alpha=\alpha_0\cup\alpha_1\cup\alpha_0'\cup\alpha_2$.

It is also clear from Figure~\ref{fig:SnSp_monotone} that the right ends of top rainbow arcs and the left ends of bottom rainbow arcs all lie in $\alpha_0\cup\alpha_1\cup\alpha_0'$. This gives us some \textit{monotone} property as one will observe in the next subsection.

\subsection{Splittings}
\label{subsec:4.2}

In this subsection we prove Proposition~\ref{prop:monotone}. The strategy is to split the branched surface by pushing the $\beta$-arcs to get a new, sink-disk-free branched surface. We first remind the readers of some terms we used in~\cite{lyu2024knot}.

\begin{figure}[!hbt]
  \begin{overpic}[scale=0.7]{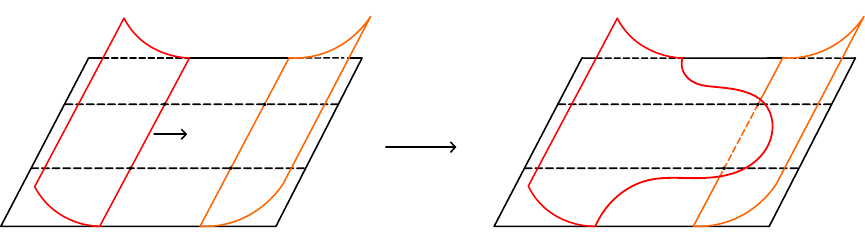}
      \put(14,11){$\color{red} \alpha$}
      \put(26,11){$\color{or} \gamma$}
      \put(83,11){$\color{or} \gamma$}
      \put(33,-1){$\Sigma$}
  \end{overpic}
  \caption{Pushing the $\alpha$-arc onto the $\gamma$-arc}
  \label{fig:pushing_arcs}
\end{figure}

One of the central tricks used in~\cite{lyu2024knot} is to model splittings of branched surfaces on 2-dimensional objects. When the branched surface is obtained by attaching sectors to a fixed surface $\Sigma$, we can describe the branch locus as curves on $\Sigma$. Then there is a special kind of splitting that gains importance in practice, which we call ``\textbf{pushing arcs}''. These splittings are along disks and are locally modelled on the middle picture of Figure~\ref{fig:splittings}. When observed on $\Sigma$, it looks like one arc (say, from curve $\alpha$) is pushed over another arc (from $\gamma$) onto the sector bounded by $\gamma$, and we say that the $\alpha$-arc is \textit{pushed onto} the $\gamma$-arc. See Figure~\ref{fig:pushing_arcs} for a 3-dimensional picture. We remark that one can also push arcs from the same curve.

\vspace{4pt}

We also developed a way to check if a branched surface is sink disk free. Recall the following definition and lemma in~\cite{lyu2024knot}:

\begin{defn}[\cite{lyu2024knot}, Definition 3.17]
  Let $\mathcal{B}$ be a branched surface and $X$ a double point in its branch locus. We say a branch sector $D$ of $\mathcal{B}$ is \textit{around $X$} if $X\in \partial D$. 
  
  We can spot all sectors around $X$ in a local picture of $X$, see Figure~\ref{fig:brsf}.$(a)$. By enumerating these sectors, we can see that there is a unique sector $D$, s.t. there exists a neighborhood $N(X)$ of $X$, where the branch locus arc $N(X)\cap \partial D$ always has its branch direction pointing into $D$. See the upper-left corner of Figure~\ref{fig:brsf}.$(a)$. We call this sector $D$ the \textbf{sink corner} of the double point $X$. We also call the double point $X$ \textbf{safe}, if its sink corner $D$ is not a sink disk.
  \label{def:sk_corner}
\end{defn}

\begin{lem}[\cite{lyu2024knot}, Lemma 3.18]
  Let $\mathcal{B}$ be a branched surface with circle boundary. If the branch locus of $\mathcal{B}$ does not contain an isolated simple circle, then a sink disk of $\mathcal{B}$ must be the sink corner of some double point.
  \label{lem:sk_corner}
\end{lem}

We are now in position to prove Proposition~\ref{prop:monotone}.

\begin{proof}[Proof of Proposition~\ref{prop:monotone}]
  Let $\mathcal{B}_{\alpha}$ be the type II branched surface associated to $\alpha$. We henceforth describe a strategy to push arcs to eliminate possible sink disks. Since our branch directions on $\Sigma-S$ always point to the left of the oriented curves, sink disks on $\Sigma-S$ are disk regions with counter-clockwise boundary. 
  
  We will first push the $\beta$-strands in a small neighborhood of $\alpha$, and then further push the $\beta$-arcs to eliminate sink disks. Recall the type II branched surface associated to $\alpha$ reversed the region $S$, hence this region needs to be fixed when pushing the arcs.

  \vspace{6pt}

  \textbf{Step 1: Push $\beta$-strands in a neighborhood of $\alpha$.} We can classify the $\beta$-strands into two classes according to their orientations. We call the strands pointing upwards the \textit{upward strands} and the strands pointing downwards the \textit{downward strands}.

  \begin{figure}[!hbt]
    \begin{overpic}{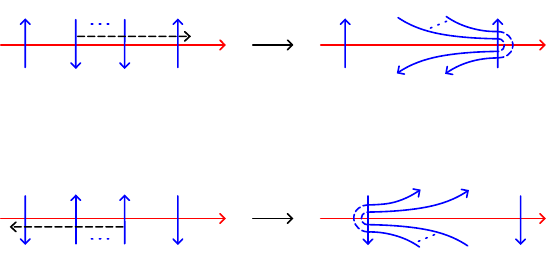}
      \put(39,39.5){\color{red}$\alpha_2$}
      \put(39,7.5){\color{red}$\alpha_1$}
      \put(30,32){$(a)$ Pushing strands on $\alpha_2$}
      \put(30,0){$(b)$ Pushing strands on $\alpha_1$}
    \end{overpic}
    \caption{Pushing arcs near $\alpha$}
    \label{fig:push_near_alpha}
  \end{figure}

  We now push the $\beta$-strands as follows: for the $\beta$-strands on $\alpha_2$, we push the downward strands to the first upward strand on the right, one by one starting from the closest, see Figure~\ref{fig:push_near_alpha}.$(a)$; for the $\beta$-strands on $\alpha_1$, we push the upward strands to the first downward strand on the left, one by one starting from the closest, see Figure~\ref{fig:push_near_alpha}.$(b)$. We remark that these operations are well defined (when $\alpha_1$ is empty, we only push the strands on $\alpha_2$), since the $\beta$-strand at the left end of $\alpha_0$ is upwards (which is the right end of $\alpha_2$), and the $\beta$-strand at the right end of $\alpha_0$ is downwards (which is the left end of $\alpha_1$ when non-empty), see Figure~\ref{fig:SnSp_monotone}. In particular we will not move $\beta_0$.
  
  Finally, we can further push the $\beta$-strands in the interior of $\alpha_0'$ to the nearest end, one by one from the closest. That is, we push the upward strands in the interior to the downward strand at the left endpoint of $\alpha_0'$, and the downward strands in the interior to the upward strand at the right endpoint. We keep the strands on $\alpha_0$ fixed, where the branch direction actually points to the $\alpha$-disk (recall Figure~\ref{fig:reversing}).

  We remark that pushing each strand would create two new double points. The sink corners of these new double points are all sectors on the torus $\Sigma$ (see Figure~\ref{fig:push_near_alpha}).

  \vspace{6pt}

  \textbf{Step 2: Further push $\beta$ to eliminate sink disks.} We now describe how we could further push $\beta$. Let $F$ be the set of $\beta$-arcs containing rainbow $\beta$-arcs pointing left, and vertical $\beta$-arcs pointing upwards that are disjoint from the interior of $\alpha_0'$. 
  
  We first claim that arcs in $F$ are not moved in Step 1. Recall Figure~\ref{fig:SnSp_monotone}, where we see the left ends of bottom rainbow arcs always lie in $\alpha_0\cup\alpha_1\cup\alpha_0'$. On the other hand, the right ends of bottom rainbow arcs are always on $\alpha_2$. For a bottom rainbow arc pointing left, we know its left end is pointing downwards and in $\alpha_1$, while its right end is pointing upwards and in $\alpha_2$; these ends are then never moved in Step 1. Similar things happen for top rainbow arcs pointing left, where the left ends lie in $\alpha_2$ and right ends lie in $\alpha_1$. Moreover, ends of vertical arcs always lie in $\alpha_0'\cup\alpha_2\cup\alpha_0$, hence upward arcs are never moved, unless they intersect $\alpha_0'$.

  In particular, there is at least one vertical arc in $F$, see the vertical $\beta$-arc connected to the right end of $\alpha_0'$ in Figure~\ref{fig:SnSp_monotone}. We also observe that $\beta_0$ and $\beta_0'$ are in $F$. Now these fixed $\beta$-arcs in $F$ and the $\alpha$-curve separate $\Sigma$ into several regions. Again, most of these regions are quadrilaterals. Besides these, there are also two bigons $S$ and $S'$, and two hexagons or an octagon. The idea is to push arcs within each region. 

  \begin{figure}[!hbt]
    \begin{overpic}[scale=0.8]{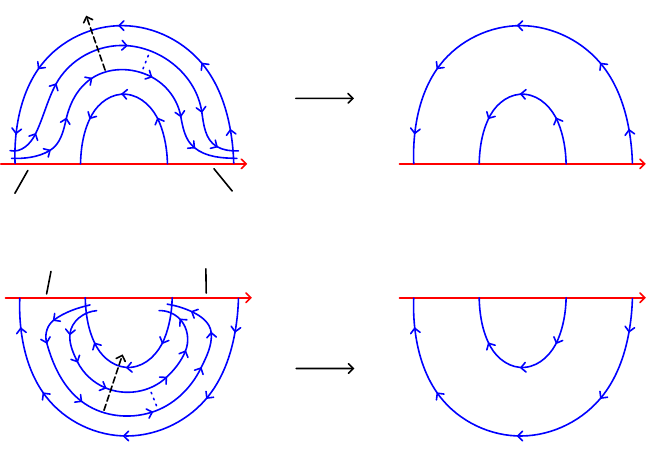}
      \put(0,40){\color{red}$\alpha_1$}
      \put(35,40){\color{red}$\alpha_2$}
      \put(6,33){\color{red}$\alpha_2$}
      \put(30,33){\color{red}$\alpha_1$}
    \end{overpic}
    \caption{Pushing arcs in ``rainbow'' quadrilaterals}
    \label{fig:push_quad_rb}
  \end{figure}

  For each quadrilateral region, we first notice that back on the original diagram, the $\beta$-arcs it contains must be of the same kind. If these $\beta$-arcs are rainbow arcs, then since we picked all rainbow arcs pointing left, we know the $\beta$-arcs in the interior are rainbow arcs pointing right. Moreover, according to Figure~\ref{fig:SnSp_monotone}, the two boundary $\alpha$-arcs of this region are disjoint from the interiors of $\alpha_0$ and $\alpha_0'$, hence are subarcs of $\alpha_1$ and $\alpha_2$ respectively. It follows that the interior $\beta$-arcs all have both ends pushed in Step 1. We can then further push these interior arcs one by one onto the boundary arc, as in Figure~\ref{fig:push_quad_rb}. Notice these operations will eliminate double points without creating new ones.

  \begin{figure}[!hbt]
    \begin{overpic}[scale=0.8]{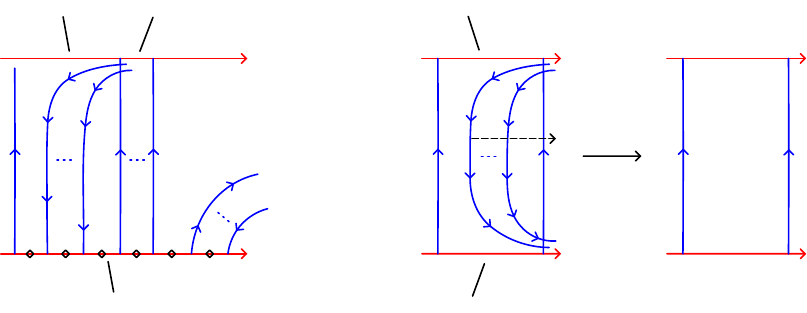}
      \put(13,0){\color{red}$\alpha_0$}
      \put(2,38){{\color{red}$\alpha_2$} or {\color{red}$\alpha_0'$}}
      \put(18,38){\color{red}$\alpha_2$}
      \put(55,38){\color{red}$\alpha_2$}
      \put(55,0){\color{red}$\alpha_2$}
    \end{overpic}
    \caption{Pushing arcs in ``vertical'' quadrilaterals}
    \label{fig:push_quad_vt}
  \end{figure}

  If the quadrilateral region contains vertical $\beta$-arcs, then we further check if it intersects the interiors of $\alpha_0,\alpha_0'$. The regions intersecting the interior of $\alpha_0'$ is the bigon $S'$ on one side, and a polygon on the other side (see Figure~\ref{fig:SnSp_monotone}, recall vertical arcs intersecting the interior of $\alpha_0'$ are not in $F$); in particular they are not quadrilaterals. There might be several quadrilateral regions intersecting the interior of $\alpha_0$ (this happens when there are upward vertical arcs starting from $\alpha_0$ and arriving at $\alpha_2$, see Figure~\ref{fig:push_quad_vt} left; we note that some of these vertical arcs may as well arrive at $\alpha_0'$ instead, in which case these vertical arcs are in some hexagon or octagon, see Figure~\ref{fig:13_4_1_7} for an example). We keep the quadrilateral regions intersecting the interior of $\alpha_0$ unchanged, since there are no sink disks: the branch direction points to the $\alpha$-disk along $\alpha_0$, see Figure~\ref{fig:push_quad_vt} left. For the other quadrilaterals, the boundary $\alpha$-arcs are subarcs of $\alpha_2$. Hence all downward vertical arcs in the interior have both ends pushed in Step 1. We can then further push these arcs one by one onto the boundary arc, see Figure~\ref{fig:push_quad_vt} right. Again, these operations will not create new double points. 

  The reversed bigon region $S$ should be fixed. For the bigon region $S'$, we can further push the interior arcs one by one onto the boundary arc $\beta_0'$, similar to pushing a rainbow quadrilateral. The complexity lies in the hexagons or octagon. These regions necessarily intersect $\alpha_0$ or $\alpha_0'$, and the interior $\beta$-arcs can be further pushed as in Figure~\ref{fig:push_poly}. We remark that here the boundary $\beta$-arcs of the polygon regions are colored in purple instead. Moreover, the blue $\beta$-arcs in the interior of the polygon regions should be understood as with non-negative multiplicities. That is, there could be just no such arcs, or there could be multiple parallel such arcs. We can always push these arcs one by one, whenever they exist.

  \begin{figure}[!hbt]
    \begin{overpic}[scale=0.4]{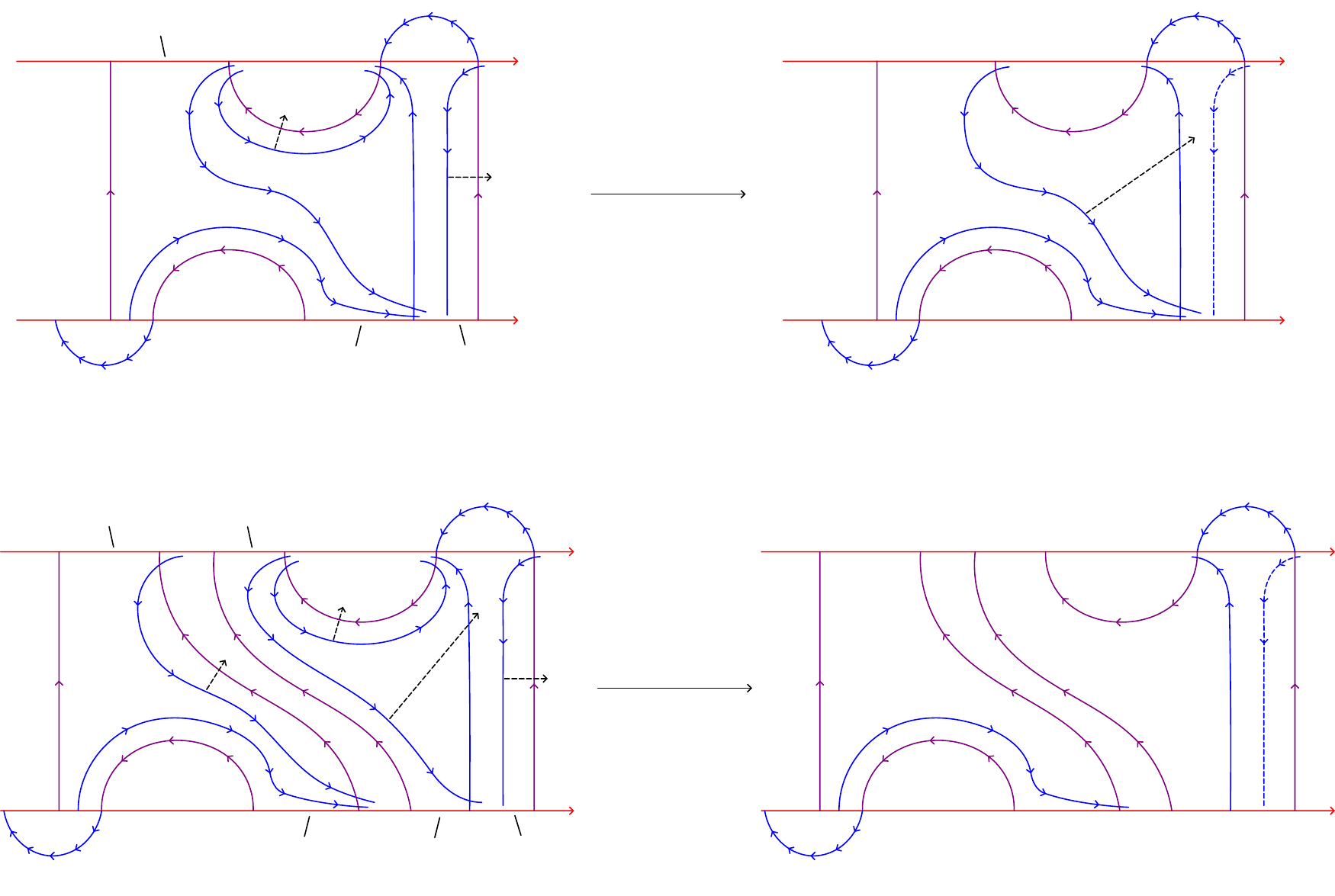}
      \put(40,35){\small $(a)$ One octagon}
      \put(40,0){\small $(b)$ Two hexagons}
      \put(31.4,63.2){\tiny$S'$}
      \put(7,41){\tiny$S$}
      \put(10,65){\tiny\color{red}$\alpha_2$}
      \put(25,40){\tiny\color{red}$\alpha_2$}
      \put(33,40){\tiny{\color{red}$\alpha_2$} or {\color{red}$\alpha_0$}}
      \put(7,28.5){\tiny\color{red}$\alpha_2$}
      \put(17,28.5){\tiny\color{red}$\alpha_2$}
      \put(35.6,26.4){\tiny$S'$}
      \put(3,4){\tiny$S$}
      \put(21,3){\tiny\color{red}$\alpha_2$}
      \put(31.3,3){\tiny\color{red}$\alpha_2$}
      \put(37,3){\tiny{\color{red}$\alpha_2$} or {\color{red}$\alpha_0$}}
    \end{overpic}
    \caption{Pushing arcs in the octagon or hexagons}
    \label{fig:push_poly}
  \end{figure}

  We here explain what happens in the octagon case in details. The two boundary rainbow $\beta$-arcs of the octagon are the outermost rainbow arcs pointing left, which are connected to $\beta_0$ and $\beta_0'$ respectively. We can then spot the boundary vertical $\beta$-arcs of the octagon in the local pictures of $S,S'$ as in Figure~\ref{fig:SnSp_monotone}. In particular the boundary vertical arc on the right is actually connected to $\beta_0'$ (recall $\beta$-arcs intersecting the interior of $\alpha_0'$ are not in $F$), while the boundary vertical arc on the left is the right-most vertical arc connected to $\alpha_0$ (which is necessarily pointing upwards). It follows that the octagon actually looks like Figure~\ref{fig:push_poly}.$(a)$. The octagon has four boundary $\alpha$-arcs. The bottom-left one is part of $\alpha_0$, and the top-right one is $\alpha_0'$. It follows that the top-left one must be part of $\alpha_2$, while the bottom-right one contains part of $\alpha_2$, and may also contain part of $\alpha_0$ at the bottom-right corner. 
  
  The $\beta$-strands on the boundary $\alpha$-arcs of the octagon are pushed in Step 1, as depicted in Figure~\ref{fig:push_poly}.$(a)$ left, where at the bottom right corner, the downward strands are fixed if the $\alpha$-arc is from $\alpha_0$, and is pushed onto the upward strand on the right if the $\alpha$-arc is from $\alpha_2$.

  We can then first push the top rainbow arcs in the interior onto the boundary top rainbow arc, one by one from the closest, if there are any. If the $\alpha$-arc at the bottom-right corner is from $\alpha_2$, we also push the downward vertical arcs connected to $S'$ to the right boundary vertical arc of the octagon, one by one from the closest (if the $\alpha$-arc at the bottom-right corner is from $\alpha_0$, then we fix these downward vertical arcs). Then the octagon would look like Figure~\ref{fig:push_poly}.$(a)$ right. So far these operations do not create new double points.
  
  If there are ``leaning'' vertical arcs pointing downwards, then there are still some sink disks on the torus (recall here sink disks are disk regions with counter-clockwise boundary), as shown in Figure~\ref{fig:push_poly}.$(a)$ right (see also the central region of Figure~\ref{fig:push_over_dbpt} left). We can then push these downward ``leaning'' vertical arcs onto the $\beta$-arcs to the right, one by one from the closest. Notice we have to push over a double point here. Each time we push an arc over the double point, a new double point is created away from the torus $\Sigma$, see Figure~\ref{fig:push_over_dbpt}, where the two bottom-left arcs are pushed over the top-right double point. The arrows in Figure~\ref{fig:push_over_dbpt} indicate the branch directions; the double points away from $\Sigma$ are marked in black dots.

  \begin{figure}[!hbt]
    \begin{overpic}[scale=0.4]{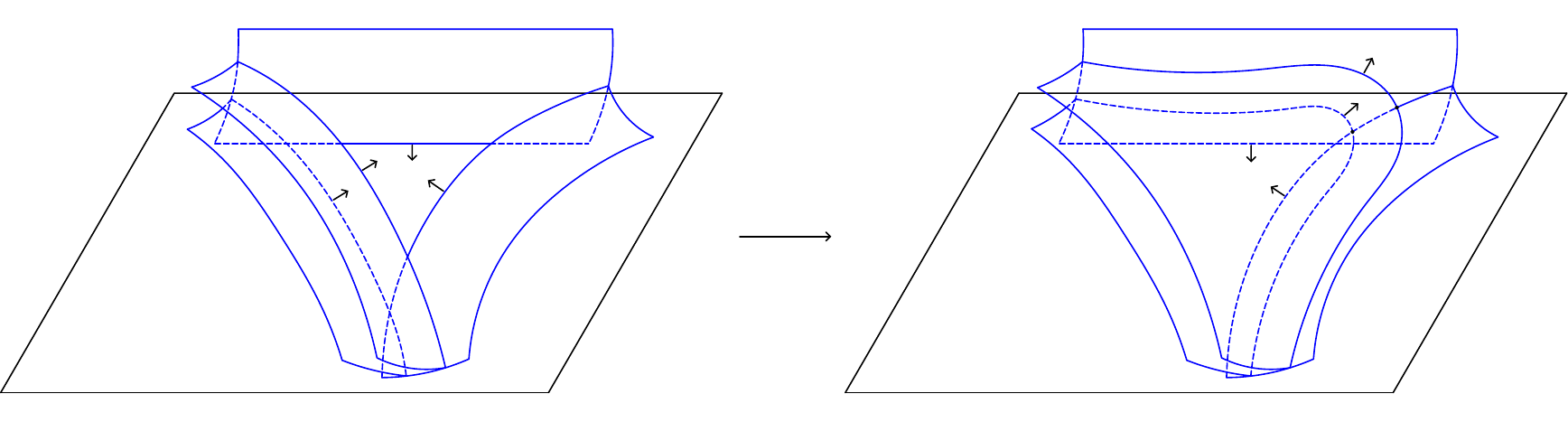}
      \put(4,3){$\Sigma$}
    \end{overpic}
    \caption{Pushing arcs over a double point}
    \label{fig:push_over_dbpt}
  \end{figure}

  The good news is that the sink corners of these double points away from $\Sigma$ are never sink disks. In fact, as can be seen in Figure~\ref{fig:push_over_dbpt} right, the sink corner of such a double point is either a sector whose boundary branch direction points out at the preceding pushed arcs, or is the sector containing the center of the $\beta$-disk (we can assume our splittings only take place at a small neighborhood of the boundary of the $\beta$-disk, that is, the splitting disks never intersect the $I$-fiber at the center of the $\beta$-disk). The latter case happens when there are ``leaning'' vertical arcs pointing downwards, and the outermost top rainbow arc is pointing left. The sector containing the center of the $\beta$-disk is never a sink disk, since its boundary contains $\beta_1$ (which is in the interior of $S$, thus never pushed or pushed over), along which the branch direction is pointing out.

  The two hexagons case is similar. The only difference is that now we have ``leaning'' vertical arcs pointing upwards, see Figure~\ref{fig:push_poly}.$(b)$. These arcs are necessarily in $F$. As a consequence, we only need to push some of the downward ``leaning'' arcs over double points, and others onto these upward ``leaning'' arcs instead.

  We henceforth check that after these splittings our branched surface is sink disk free. Call the new branched surface $\mathcal{B}_{\alpha}^{\dagger}$. Since our splittings are all modelled on the middle picture of Figure~\ref{fig:splittings}, they do not change the topology of $\partial_v N(\mathcal{B}_{\alpha})$. In particular the branch locus of $\mathcal{B}_{\alpha}^{\dagger}$ still consists of 4 immersed curves. One can verify that these curves do intersect at $S$ (which is fixed by the splittings). Hence by Lemma~\ref{lem:sk_corner} we only need to check that sink corners of double points are never sink disks.

  Recall in Step 1 our splittings created new double points on the torus $\Sigma$, while in Step 2 our splittings created new double points only when we are pushing the ``leaning'' arcs over a double point. The new double points in Step 2 are safe by our discussion above. Recall the sink corners of the new double points created in Step 1 are all on the torus $\Sigma$. In Step 2 these double points could vanish, or could remain unchanged locally, or could be pushed over as in Figure~\ref{fig:push_over_dbpt}. In any case the possible sink corners are still on the torus $\Sigma$. However, after our splittings there are no sink disks on $\Sigma$ (one could check this by looking at each region cut out by $F\cup \{\alpha\}$, see Figures~\ref{fig:push_quad_rb}~\ref{fig:push_quad_vt}~\ref{fig:push_poly}). Hence these double points are all safe. 
  
  All other double points of $\mathcal{B}_{\alpha}^{\dagger}$ come from double points of $\mathcal{B}_{\alpha}$. For those double points coming from $\mathcal{B}_{\alpha}$ and disjoint from $S$, their sink corners are also on the torus $\Sigma$. Hence these double points are also safe.

  It remains for us to check the double points in $S$. In particular they are all on $\alpha_0$, see Figure~\ref{fig:reversing}. The sink corner of the double points in the interior $\alpha_0$ is the $\alpha$-disk (see the top-right local picture of Figure~\ref{fig:reversing}), which is not a sink disk since along $(\alpha-\alpha_0)$ the branch direction points out. The sink corners of the double points at the two ends of $\alpha_0$ are again on $\Sigma$, see the top-left local picture of Figure~\ref{fig:reversing}. It follows that all these double points are safe.

  Now that we have checked all double points of $\mathcal{B}_{\alpha}^{\dagger}$ to be safe, by Lemma~\ref{lem:sk_corner} $\mathcal{B}_{\alpha}^{\dagger}$ is sink disk free. Again, since our splittings are all modelled on the middle picture of Figure~\ref{fig:splittings}, they do not change the topology of $\partial_h N(\mathcal{B}_{\alpha})$. It follows that $\partial_h N(\mathcal{B}_{\alpha}^{\dagger})$ still consists of some annuli, as described in Proposition~\ref{lem:vB}. Moreover, $\mathcal{B}_{\alpha}^{\dagger}$ cannot carry any torus, for otherwise this torus would be carried by the co-oriented branched surface $\mathcal{B}_{\alpha}$, which then becomes a non-separating torus in the ambient rational homology sphere (via the embedding described in Remarks~\ref{rems:brsfs}.$(1)$). Now we can apply Lemma~\ref{lem:sk_disk_free} and conclude that $\mathcal{B}_{\alpha}^{\dagger}$ fully carries a lamination. So does $\mathcal{B}_{\alpha}$, since $\mathcal{B}_{\alpha}^{\dagger}$ is obtained by splitting $\mathcal{B}_{\alpha}$.
\end{proof}

\section{Strongly incoherent diagrams}
\label{sec:5}

In this section we prove Proposition~\ref{prop:strongly_incoherent}.

\subsection{Outlines}
\label{subsec:5.1}

In this subsection we outline the proof of Proposition~\ref{prop:strongly_incoherent}. We will put down some lemmas but defer their proofs to later subsections. In particular, a proof of Proposition~\ref{prop:strongly_incoherent} using these lemmas will be given at the end of this subsection.

Suppose now we have a reduced, $\alpha$-strongly incoherent diagram $(\Sigma,\alpha,\beta,z,w)$. We put $\alpha$ in standard position and let $\gamma,\delta$ be the carrying curves of $\alpha,\beta$ respectively.

\vspace{4pt}

\textbf{Case 1: The diagram $(\Sigma,\alpha,\delta,z,w)$ is non-simple.} 

In this case we can make use of the branched surface we constructed in~\cite{lyu2024knot}. In particular, the following lemma will be proved in subsection~\ref{subsec:5.4}. Recall the notation of $S_0$ in subsection~\ref{subsec:2.3}; see also Figure~\ref{fig:reversing}.

\begin{lem}
  Let $(\Sigma,\alpha,\beta,z,w)$ be a reduced, $\alpha$-strongly incoherent (1,1) diagram, and $\delta$ be the carrying curve of $\beta$. Suppose $(\Sigma,\alpha,\delta,z,w)$ is non-simple. Then the type I modified Heegaard branched surface of $(\Sigma,\alpha,\beta,z,w)$ associated to $S_0$ fully carries a lamination.
  \label{lem:StrongIncoherentReduction}
\end{lem}

\textbf{Case 2: The diagram $(\Sigma,\alpha,\delta,z,w)$ is simple, but the diagram $(\Sigma,\alpha,\beta,z,w)$ is also $\beta$-strongly incoherent, and the diagram $(\Sigma,\gamma,\beta,z,w)$ is non-simple.}

In this case we can still apply Lemma~\ref{lem:StrongIncoherentReduction}. In fact, we can interchange $\alpha$ and $\beta$ in the lemma to get a new, $\beta$-strongly incoherent version. The reduction would then become $(\Sigma,\alpha,\beta,z,w)\rightarrow (\Sigma,\gamma,\beta,z,w)$, and the type I branched surface fully carrying laminations would reverse a possibly different sector, which could be spotted when we put $\beta$ in standard position instead.

\vspace{6pt}

\textbf{Case 3: Both $(\Sigma,\alpha,\delta,z,w)$ and $(\Sigma,\gamma,\beta,z,w)$ are simple.}

In this case the diagram is actually primitive. We recall the following definition and propositions in~\cite{lyu2024knot}:

\begin{defn}[\cite{lyu2024knot}, Definition 3.2]
  Let $(\Sigma,\alpha,\beta,z,w)$ be a reduced (1,1)-diagram. The diagram is called \textit{primitive} if  
  \begin{enumerate}[(i)]
      \item any quadrilateral has ($\beta$-)parallel $\alpha$-arcs if and only if it has ($\alpha$-)parallel $\beta$-arcs, and
      \item if there are two hexagons, then for each hexagon the two non-parallel boundary arcs (one $\alpha$-arc and one $\beta$-arc) are next to each other.
  \end{enumerate}
  \label{def:primitive}
\end{defn}

\begin{prop}[\cite{lyu2024knot}, Proposition 3.3]
  Let $(\Sigma,\alpha,\beta,z,w)$ be a reduced (1,1)-diagram where $\alpha$ is placed in standard position. Then the diagram is primitive if and only if
  \begin{enumerate}[(i)]
      \item its rainbow arcs are in alternating direction, and
      \item its vertical arcs are in the same direction.
  \end{enumerate}
  \label{prop:primitive_standard}
\end{prop}

\begin{prop}[\cite{lyu2024knot}, Proposition 3.9]
  If a non-simple reduced (1,1)-diagram $(\Sigma,\alpha,\beta,z,w)$ is not primitive, with the curves $\gamma,\delta$ carrying $\alpha,\beta$ respectively, then either $(\Sigma,\gamma,\beta,z,w)$ or $(\Sigma,\alpha,\delta,z,w)$ is non-simple.
  \label{prop:primitive_simple}
\end{prop}

We will discuss this case in subsection~\ref{subsec:5.3}, where we prove the following lemma:

\begin{lem}
  Let $(\Sigma,\alpha,\beta,z,w)$ be a primitive, incoherent (1,1) diagram. Then the type I modified Heegaard branched surface of $(\Sigma,\alpha,\beta,z,w)$ associated to $S_0$ fully carries a lamination.
  \label{lem:primitive_lami}
\end{lem}

\textbf{Case 4: The diagram $(\Sigma,\alpha,\beta,z,w)$ is not $\beta$-strongly incoherent.}

Here the $\beta$-arcs connected to $\beta_1$ must be vertical. It follows that the diagram is either $\alpha$-monotone, or all $\beta$-arcs connected to $S$ are vertical. The former case is dealt with in subsection~\ref{subsec:5.2}, while for the latter case we can do reductions and make use of Lemma~\ref{lem:typeIreds}. In particular we will prove the following lemma in subsection~\ref{subsec:5.2}:

\begin{lem}
  Let $(\Sigma,\alpha,\beta,z,w)$ be an $\alpha$-monotone, $\alpha$-strongly incoherent (1,1) diagram. Then the type I modified Heegaard branched surface of $(\Sigma,\alpha,\beta,z,w)$ associated to $S_0$ fully carries a lamination.
  \label{lem:monotone-strongly_incoherent}
\end{lem}

We now give a proof of Proposition~\ref{prop:strongly_incoherent}, modulo the Lemmas~\ref{lem:StrongIncoherentReduction}~\ref{lem:primitive_lami}~\ref{lem:monotone-strongly_incoherent} above. The reduction argument used here is similar to that in the proof of Proposition~\ref{prop:lami!} in subsection~\ref{subsec:3.5}.

\begin{proof}[Proof of Proposition~\ref{prop:strongly_incoherent}]
  Suppose first the diagram is both $\alpha$- and $\beta$- strongly incoherent. If one of $(\Sigma,\alpha,\delta,z,w)$ or $(\Sigma,\gamma,\beta,z,w)$ is non-simple, then we can apply Lemma~\ref{lem:StrongIncoherentReduction} to find a type I branched surface that fully carries a lamination. If both $(\Sigma,\alpha,\delta,z,w)$ and $(\Sigma,\gamma,\beta,z,w)$ are simple, then by Proposition~\ref{prop:primitive_simple} the diagram is primitive, and by Lemma~\ref{lem:primitive_lami} there is a type I branched surface that fully carries a lamination.

  Now suppose the diagram is $\alpha$-strongly incoherent but not $\beta$-strongly incoherent. Place $\alpha$ in standard position and recall the notations in subsection~\ref{subsec:2.3}. Then since the diagram is not $\beta$-strongly incoherent, the $\beta$-arcs connected to $\beta_1$ are vertical arcs. If the diagram is $\alpha$-monotone, then by Lemma~\ref{lem:monotone-strongly_incoherent} we can find a type I branched surface that fully carries a lamination.

  Now suppose moreover the diagram is not $\alpha$-monotone. Then all $\beta$-arcs connected to $S$ are vertical arcs. Consider $\gamma$ the carrying curve of $\alpha$. By Proposition~\ref{prop:carrying} $\gamma$ would necessarily intersect these vertical $\beta$-arcs connected to $S$. In particular, the diagram $(\Sigma,\gamma,\beta,z,w)$ must be non-simple and $\gamma$-strongly incoherent, and the reduction $(\Sigma,\alpha,\beta,z,w)\rightarrow (\Sigma,\gamma,\beta,z,w)$ is of type I.

  We can now put $\gamma=\gamma_1$ in standard position and think of the $\beta$-arcs connected to our new ``$S$'' in the $(\Sigma,\gamma_1,\beta,z,w)$ diagram. If all the $\beta$-arcs are still vertical, then we can take $\gamma_2$ the carrying curve of $\gamma_1$, and by a similar argument, $(\Sigma,\gamma_2,\beta,z,w)$ is $\gamma_2$-strongly incoherent, and the reduction $(\Sigma,\gamma_1,\beta,z,w)\rightarrow (\Sigma,\gamma_2,\beta,z,w)$ is of type I. We can repeat this procedure whenever possible. Again, by Lemma~\ref{lem:simpler} this procedure must stop after finitely many times. 
  
  Suppose it stops at some diagram $(\Sigma,\gamma_n,\beta,z,w)$. In particular this diagram is $\gamma_n$-strongly incoherent. Now the two $\beta$-arcs connected to the $\gamma_n$-sink bigon are either both rainbow arcs or both vertical arcs. In the former case the diagram is also $\beta$-strongly incoherent, and by our previous discussion there is some type I branched surface of this diagram that fully carries a lamination. In the latter case the diagram is necessarily $\gamma_n$-monotone, and by Lemma~\ref{lem:monotone-strongly_incoherent} we can still find a type I branched surface that fully carries a lamination. Since our reductions are all of type I, by Lemma~\ref{lem:typeIreds} we can go backwards along the reductions and find a type I branched surface of $(\Sigma,\alpha,\beta,z,w)$ that fully carries a lamination. This concludes the proof.
\end{proof}

\subsection{Monotone, strongly incoherent diagrams}
\label{subsec:5.2}

In this subsection we prove Lemma~\ref{lem:monotone-strongly_incoherent}. The key observation here is that for an $\alpha$-strongly incoherent diagram, the type I branched surface associated to $S_0$ and the type II branched surface associated to $\alpha$ do not essentially differ. We state this in a more general way, which could also be used in the following subsections.

\begin{figure}[!hbt]
  \begin{overpic}[scale=0.6]{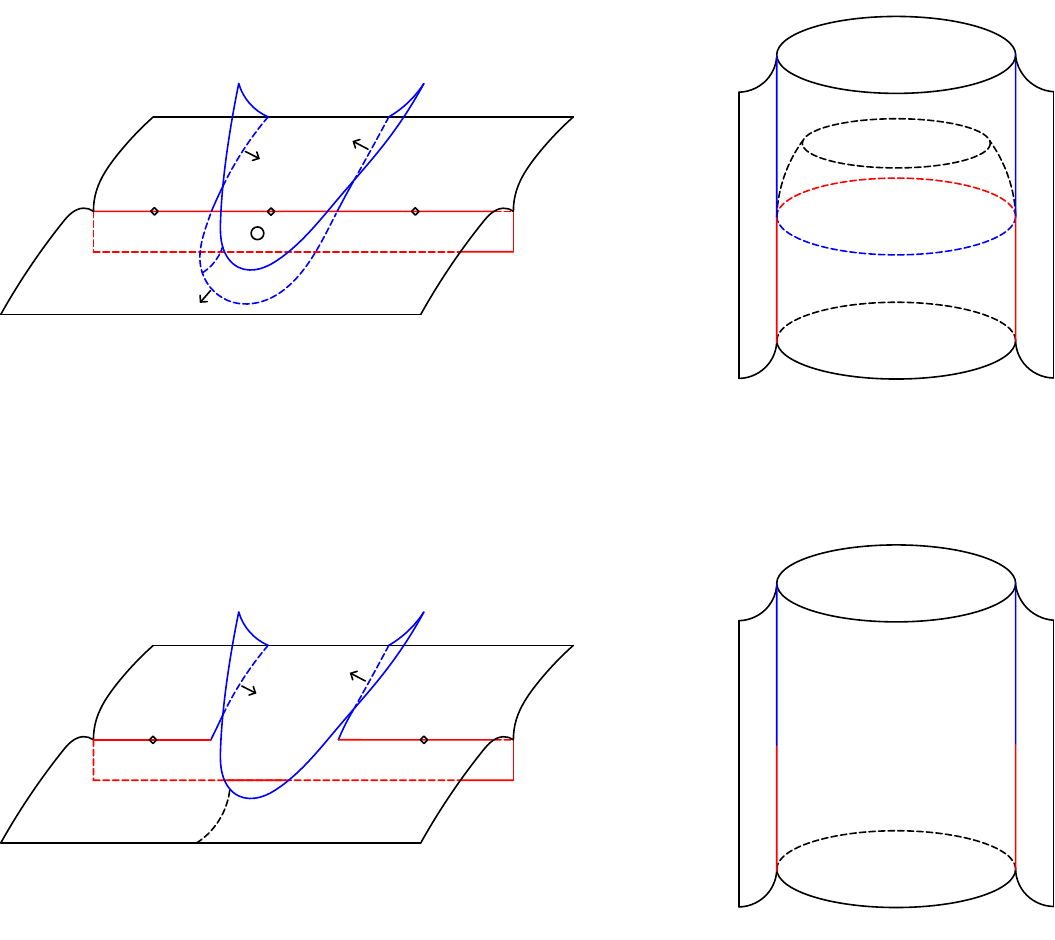}
    \put(25.5,66.6){\tiny $\partial_w$}
    \put(83,76){\tiny $\partial_w$}
    \put(28,48){$(a)$ Local pictures of $\mathcal{B}_{\alpha}$ near $w$}
    \put(28,-2){$(b)$ Local pictures of $\mathcal{B}_{\alpha}'$ near $w$}
  \end{overpic}
  \caption{Removing punctured bigon from $\mathcal{B}_{\alpha}$ to get $\mathcal{B}_{\alpha}'$}
  \label{fig:remove_ann_I}
\end{figure}

Let $(\Sigma,\alpha,\beta,z,w)$ be a reduced, incoherent (1,1) diagram. Let $\mathcal{B}_{\alpha}$ be the type II branched surface associated to $\alpha$. We can then look at the local picture of $\mathcal{B}_{\alpha}$ near the $\alpha$-source bigon of the diagram (see Figure~\ref{fig:reversing} or Figure~\ref{fig:remove_ann_I}.$(a)$). When we think of the branch locus of $\mathcal{B}_{\alpha}$ as 4 immersed curves, then the boundary of this (punctured) bigon is one entire immersed curve. Let $\mathcal{B}_{\alpha}'$ be the branched surface obtained by removing this punctured bigon (which is a source sector). It is then clear from the construction that $N(\mathcal{B}_{\alpha}')$ is homeomorphic to the knot complement with $\partial_v N(\mathcal{B}_{\alpha}')$ consisting of (four) meridional annuli. In fact, $\mathcal{B}_{\alpha}$ can be obtained from $\mathcal{B}_{\alpha}'$ by attaching an annulus to a ``meridian'' on the horizontal boundary, see Figure~\ref{fig:remove_ann_I}.

\begin{figure}[!hbt]
  \begin{overpic}[scale=0.6]{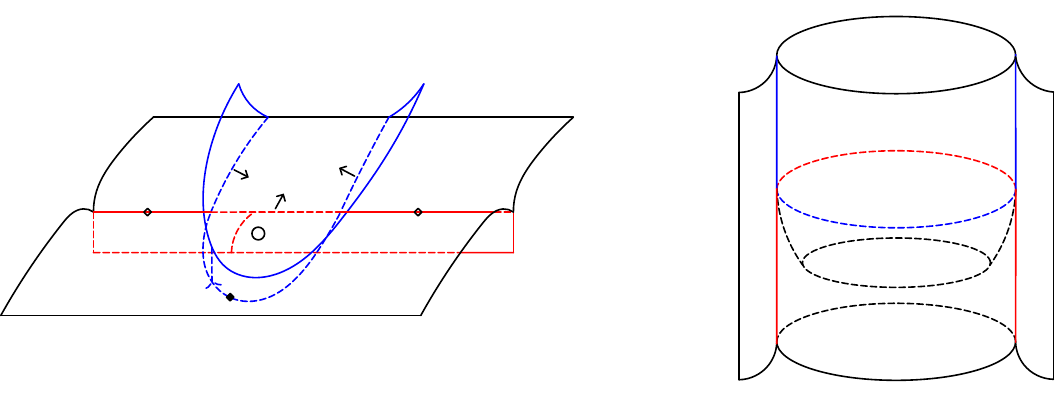}
    \put(25.5,16.6){\tiny $\partial_w$}
    \put(83,13){\tiny $\partial_w$}
  \end{overpic}
  \caption{Local pictures of $\mathcal{B}_{S_0}$ near $w$}
  \label{fig:remove_ann_II}
\end{figure}

If $(\Sigma,\alpha,\beta,z,w)$ is moreover $\alpha$-strongly incoherent, then similar things happen for $\mathcal{B}_{S_0}$ the type I branched surface reversing $S_0$. One can check that in this case the boundary of the $\alpha$-source bigon is also an entire immersed curve, see Figure~\ref{fig:remove_ann_II}. Moreover, $\mathcal{B}_{S_0}$ can also be obtained by attaching an annulus to $\mathcal{B}_{\alpha}'$ along a ``meridian'' on $\partial_h N(\mathcal{B}_{\alpha}')$, where we smooth the annulus in a different direction, see Figures~\ref{fig:remove_ann_I}~\ref{fig:remove_ann_II}.

\begin{lem}
  Let $\mathcal{B}$ be a co-oriented branched surface with circle boundary. Let $A$ be an annulus component of $\partial_h N(\mathcal{B})$. Let $\delta$ be a core curve of $A$. We can construct a branched surface $\mathcal{B'}$ by attaching an annulus to $\mathcal{B}$, such that one boundary component of the annulus is identified with $\delta$ (we smooth the annulus in any direction). Suppose $\mathcal{B'}$ has no disk of contact. Then $\mathcal{B}$ fully carries a lamination if and only if $\mathcal{B'}$ does.
  \label{lem:remove_ann}
\end{lem}

\begin{proof}
  Suppose first that $\mathcal{B}$ fully carries a lamination $\mathcal{L}$. By standard operations in the lamination theory (e.g. see~\cite{li2022taut}, discussions before Lemma 5.1), we may assume $\partial_h N(\mathcal{B})\subset \mathcal{L}$, and moreover no two cusp components of $\partial_v N(\mathcal{B})$ belong to the same $I$-bundle component of $N(\mathcal{B})-\mathcal{L}$. Let $A_1,A_2$ be the annulus components of $\partial_v N(\mathcal{B})$ next to $A$, where the branch direction along $\delta$ points to $A_1$.

  \begin{figure}[!hbt]
    \begin{overpic}{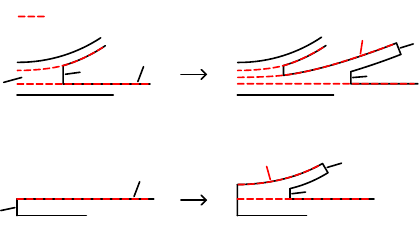}
      \put(12.3,54){\small laminations}
      \put(-13.6,38){\small $F_1\times I$}
      \put(19.3,41){\small $A_1$}
      \put(33,44){\small $A$}
      \put(14,32){\small $N(\mathcal{B})$}
      \put(78,51){\small new leaf}
      \put(99,47){\small $\delta$-boundary}
      \put(89,40.5){\small $\delta$-cusp}
      \put(72,32){\small $N(\mathcal{B'})$}
      \put(33.5,27){\small $(a)$ $A_1$ is a cusp}
      \put(-5,7){\small $A_1$}
      \put(32,16.7){\small $A$}
      \put(14,3){\small $N(\mathcal{B})$}
      \put(50,20.6){\small new leaf}
      \put(82,19){\small $\delta$-boundary}
      \put(74,12.7){\small $\delta$-cusp}
      \put(72,3){\small $N(\mathcal{B'})$}
      \put(30,-2){\small $(b)$ $A_1$ is a boundary}
    \end{overpic}
    \caption{Adding an annulus to $\mathcal{B}$}
    \label{fig:ann_lem_1}
  \end{figure}
  
  \textbf{Case 1: $A_1$ is a cusp.} Let $F_1\times I$ be the $I$-bundle component of $N(\mathcal{B})-\mathcal{L}$ whose boundary contains $A_1$. We can then take the annulus component of $\partial_h N(\mathcal{B'})$ bounded by $A_1$ and the $\delta$-boundary, and attach it to $F_1\times \frac{1}{2}$ along $A_1$; we then add this leaf to $\mathcal{L}$, see Figure~\ref{fig:ann_lem_1}.$(a)$. The new lamination is isotopic to a lamination fully carried by $\mathcal{B'}$. 

  \textbf{Case 2: $A_1$ is a boundary.} In this case we can simply take the annulus component of $\partial_h N(\mathcal{B'})$ bounded by $A_1$ and the $\delta$-boundary, and add this annulus to $\mathcal{L}$, see Figure~\ref{fig:ann_lem_1}.$(b)$. The new lamination is isotopic to one fully carried by $\mathcal{B'}$.

  \vspace{6pt}

  Now suppose $\mathcal{B'}$ fully carries a lamination $\mathcal{L'}$. Again, we may assume that $\partial_h N(\mathcal{B})\subset \mathcal{L'}$, and that no two cusp components of $\partial_v N(\mathcal{B'})$ belong to the same $I$-bundle component of $N(\mathcal{B'})-\mathcal{L'}$. Let $A_2'$ be the annulus component of $\partial_v N(\mathcal{B'})$ corresponding to $A_2$.

  \begin{figure}[!hbt]
    \begin{overpic}{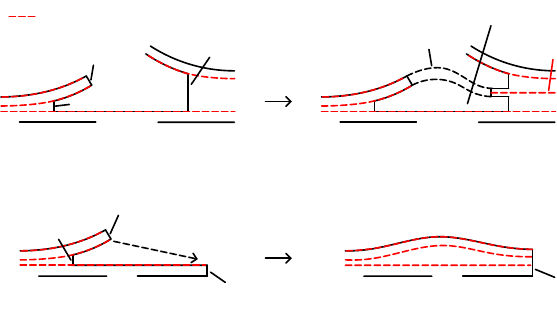}
      \put(7.5,52.7){\small laminations}
      \put(6,45.6){\small $\delta$-boundary}
      \put(13.3,37.4){\small $\delta$-cusp}
      \put(36,47){\small $A_2'$}
      \put(35.4,38.5){\small $F_2\times I$}
      \put(54,49){\small use gluing lemma}
      \put(80,54){\small bubble to be collapsed}
      \put(99,46.6){\small $F'$}
      \put(19,30){\small $N(\mathcal{B'})$}
      \put(66,30){\small $N(\mathcal{B''})$ and $N(\mathcal{B'''})$}
      \put(40,24){\small $(a)$ $A_2'$ is a cusp}
      \put(4,15){\small $\delta$-cusp}
      \put(20,19){\small $\delta$-boundary}
      \put(26,13){\small collapse}
      \put(41,5){\small $A_2'$}
      \put(18,2.5){\small $N(\mathcal{B'})$}
      \put(100,5){\small $A_2$}
      \put(75,2.5){\small $N(\mathcal{B})$}
      \put(37,-3){\small $(b)$ $A_2'$ is a boundary}
    \end{overpic}
    \caption{Eliminating the annulus from $\mathcal{B'}$}
    \label{fig:ann_lem_2}
  \end{figure}
  
  \textbf{Case 1: $A_2'$ is a cusp.} Let $F_2\times I$ be the $I$-bundle component of $N(\mathcal{B'})-\mathcal{L'}$ whose boundary contains $A_2'$. We can extend $F_2\times \frac{1}{2}$ along $A_2'$ by attaching a small annulus to $A_2'$ and add this leaf $F'$ to $\mathcal{L'}$. The resulting lamination $\mathcal{L''} = \mathcal{L'}\cup \{F'\}$ is fully carried by some branched surface $\mathcal{B''}$, which is obtained by attaching an annulus to the $A_2'$-cusp of $\mathcal{B'}$, see Figure~\ref{fig:ann_lem_2}.$(a)$. Since $\mathcal{B'}$ has no disk of contact, $\mathcal{B''}$ also has no disk of contact. In particular $\mathcal{L''}$ has no disk leaf. Now by Gabai's gluing lemma (e.g. see~\cite{li2002laminar}, Lemma 3.4) we can glue the two boundary circles corresponding to $\delta$ and the $A_2'$-cusp, and get a branched surface $\mathcal{B'''}$ fully carrying a lamination. One can then collapse the annulus bubble of $\mathcal{B'''}$ to get $\mathcal{B}$, while the lamination is collapsed to be fully carried by $\mathcal{B}$.

  \textbf{Case 2: $A_2'$ is a boundary.} In this case we can simply collapse the attached annulus to the annulus bounded by $\delta$(-cusp) and $A_2'$, see Figure~\ref{fig:ann_lem_2}.$(b)$. The lamination is then collapsed to be fully carried by $\mathcal{B}$.
\end{proof}

\begin{rem}
  One should compare this lemma with Lemma~\ref{lem:cusp}. The fact that there is an annulus \textit{branch sector} (instead of just boundary information) makes things simpler and allows us to ``drop'' the tautness condition to some extent. This lemma could also be viewed as a generalization to~\cite{lyu2024knot}, Lemma 3.12.
\end{rem}

Now Lemma~\ref{lem:monotone-strongly_incoherent} follows immediately.

\begin{proof}[Proof of Lemma~\ref{lem:monotone-strongly_incoherent}]
  Let $\mathcal{B}_{S_0}$ be the type I branched surface associated to $S_0$, and $\mathcal{B}_{\alpha}$ be the type II branched surface associated to $\alpha$. Let $\mathcal{B}_{\alpha}'$ be the branched surface obtained by removing the punctured $\alpha$-source bigon in $\mathcal{B}_{\alpha}$ (see Figure~\ref{fig:remove_ann_I}). Then as discussed before, $\mathcal{B}_{\alpha}$ and $\mathcal{B}_{S_0}$ can both be obtained by attaching an annulus along a ``meridian'' on $\partial_h N(\mathcal{B}_{\alpha}')$; in particular this meridian is necessarily a core curve of an annulus component of $\partial_h N(\mathcal{B}_{\alpha}')$. Moreover, $\mathcal{B}_{\alpha}$ and $\mathcal{B}_{S_0}$ cannot have any disk of contact (such a disk of contact would become a meridional disk in the (1,1) knot complement). Hence by Lemma~\ref{lem:remove_ann} we know $\mathcal{B}_{\alpha}$ fully carries a lamination if and only if $\mathcal{B}_{\alpha}'$ fully carries a lamination, if and only if $\mathcal{B}_{S_0}$ fully carries a lamination. $\mathcal{B}_{\alpha}$ does fully carry a lamination by Proposition~\ref{prop:monotone}. Hence $\mathcal{B}_{S_0}$ also fully carries a lamination.
\end{proof}

\subsection{Primitive diagrams, and modified sink tube push}
\label{subsec:5.3}

In this subsection we prove Lemma~\ref{lem:primitive_lami}. The strategy is to split the branched surface to make it sink disk free, and is much similar to the proof strategy of~\cite{lyu2024knot}, Proposition 3.4. We will still define a ``sink tube push'', but will slightly modify it to fit the modified Heegaard branched surface here.

Let $(\Sigma,\alpha,\beta,z,w)$ be a reduced, $\alpha$-strongly incoherent (1,1) diagram. We place $\alpha$ in standard position and consider the sector $S_0$ (recall notations in subsection~\ref{subsec:2.3}). Let $\mathcal{B}_{S_0}$ be the type I modified Heegaard branched surface reversing $S_0$, and let $\mathcal{B}_{\alpha}'$ be the branched surface obtained by removing the punctured $\alpha$-source bigon from $\mathcal{B}_{S_0}$ (again see Figure~\ref{fig:remove_ann_I}). We also recall the ``pushing arcs'' notations in~\cite{lyu2024knot} (see also subsection~\ref{subsec:4.2}).

Recall the $\beta$-sink tube of the (1,1) diagram is the subregion of $\Sigma$ consisting of all the $\beta$-sink sectors. The boundary of the $\beta$-sink tube consists of two short ($\beta$-sink) $\alpha$-arcs and two long $\beta$-arcs. In~\cite{lyu2024knot}, subsection 3.4 we defined the ``$\beta$-sink tube push'' to be pushing one boundary long $\beta$-arc onto the other, such that we do not move the boundary $\beta$-arc of the source bigon (which is removed in the branched surface in~\cite{lyu2024knot}).

\begin{figure}[!hbt]
  \begin{overpic}[scale=0.6]{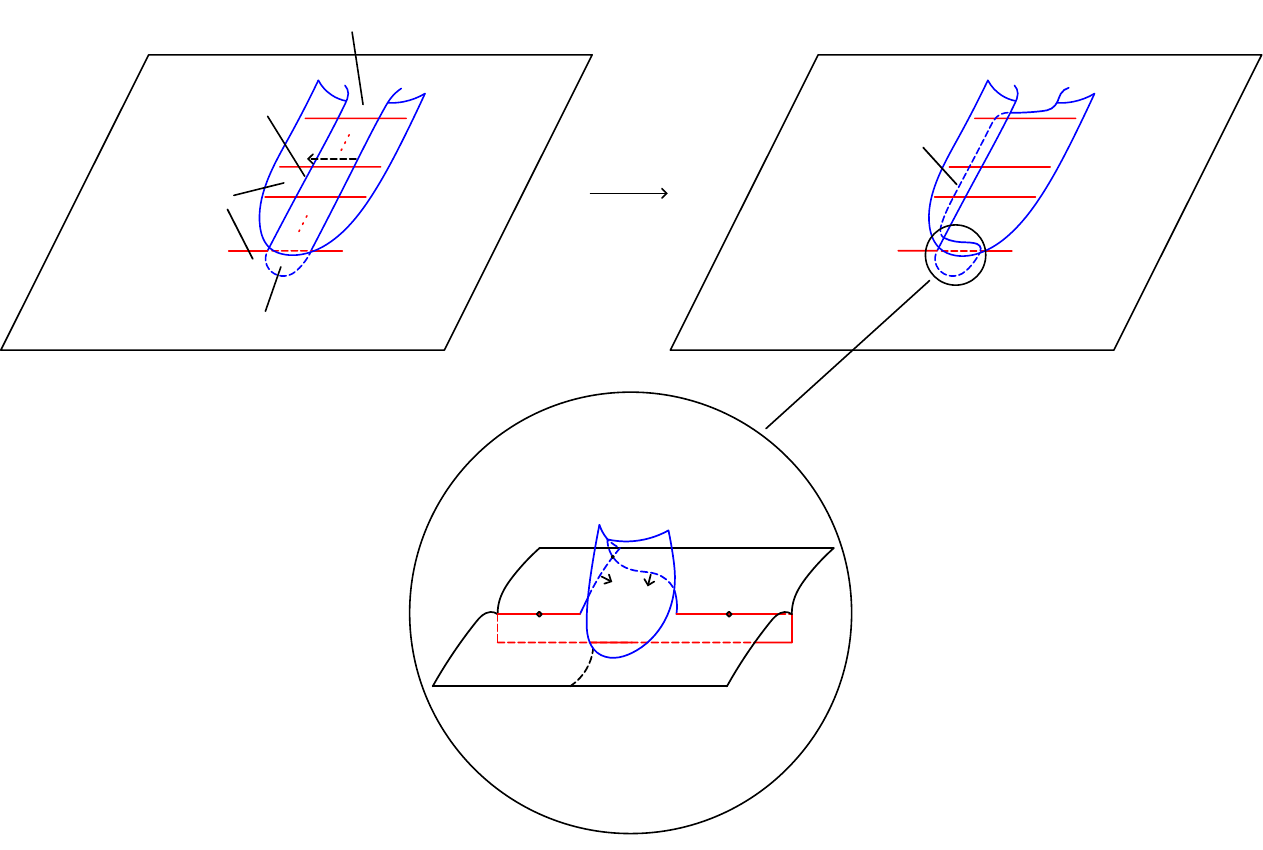}
    \put(14,66){\small the $\beta$-sink hexagon or octagon}
    \put(19,58.8){\small \color{blue} $\beta_0$}
    \put(23.7,44.7){\small \color{blue} $\beta_1$}
    \put(27.4,47){\small \color{red} $\alpha_0$}
    \put(15.7,51.4){\small $S_0$}
    \put(4,41){\small the removed $\beta$-sink bigon}
    \put(12,60.5){\small $\Sigma$}
    \put(17,36.8){\small $\mathcal{B}_{\alpha}'$}
    \put(64,59){\tiny branch locus}
    \put(64,57.55){\tiny arc pushed}
    \put(64,56){\tiny onto $\beta$-disk}
    \put(75,48.4){\scalebox{0.4}{$X$}}
    \put(80.6,58.6){\scalebox{0.4}{$Y$}}
    \put(49,22.8){\scalebox{0.4}{$X$}}
  \end{overpic}
  \caption{The modified $\beta$-sink tube push}
  \label{fig:modified_sink_tube_push}
\end{figure}

A \textbf{modified ($\beta$-)sink tube push} (with respect to the diagram $(\Sigma,\alpha,\beta,z,w)$) is a splitting of the branched surface $\mathcal{B}_{\alpha}'$ described by pushing the $\beta$-arcs. Since we reversed $S_0$ and removed the $\alpha$-source ($\beta$-sink) bigon, when pushing arcs for $\mathcal{B}_{\alpha}'$ we cannot move $\beta_0$ and $\beta_1$. We \textbf{suppose $\beta_0$ is on the boundary of $S_0$ and a $\beta$-sink sector}. Then $\beta_0$ is contained in one of the boundary long $\beta$-arcs of the $\beta$-sink tube. Our ``modified ($\beta$-)sink tube push'' is to push the other boundary long $\beta$-arc onto this one (to keep $\beta_0$ fixed), see Figure~\ref{fig:modified_sink_tube_push}. Moreover, $\beta_1$ is the boundary $\beta$-arc of the $\beta$-sink bigon; in order to keep $\beta_1$ fixed, we stop right before the $\beta$-sink bigon, not pushing the arcs over $\alpha_0$, see the bottom local picture of Figure~\ref{fig:modified_sink_tube_push}.

We can now prove Lemma~\ref{lem:primitive_lami}. Again, the proof is similar to ~\cite{lyu2024knot}, Proposition 3.4. We only replace the original sink tube push with the modified one above.

\begin{proof}[Proof of Lemma~\ref{lem:primitive_lami}]
  We place $\alpha$ in standard position. Since the diagram is incoherent, there are at least 2 rainbow arcs on each side of $\alpha$ and they are in alternating directions. In particular the diagram is automatically $\alpha$-strongly incoherent.

  We consider $\mathcal{B}_{S_0}$ the type I branched surface associated to $S_0$, and $\mathcal{B}_{\alpha}'$ the branched surface obtained by removing the punctured $\alpha$-source ($\beta$-sink) bigon from $\mathcal{B}_{S_0}$ (see subsection~\ref{subsec:5.2}). We prove that $\mathcal{B}_{\alpha}'$ fully carries a lamination. By the construction of $\mathcal{B}_{\alpha}'$, all double points are on the torus $\Sigma$. Moreover, since the diagram is $\alpha$-strongly incoherent and we removed the $\alpha$-source bigon, there are no double points on the interior of $\alpha_0$. Hence sink corners of all the double points are regions on $\Sigma$. In particular, any possible sink disk of $\mathcal{B}_{\alpha}'$ is some sector in the $\beta$-sink tube. See Figure~\ref{fig:6_1} for an example of sink disk here. We can then perform the modified $\beta$-sink tube push to $\mathcal{B}_{\alpha}'$ to eliminate these sink disks. We henceforth show that the new branched surface $\mathcal{B}_{\alpha}''$ after the splittings has no sink disk.

  \begin{figure}[!hbt]
    \begin{overpic}[scale=0.8]{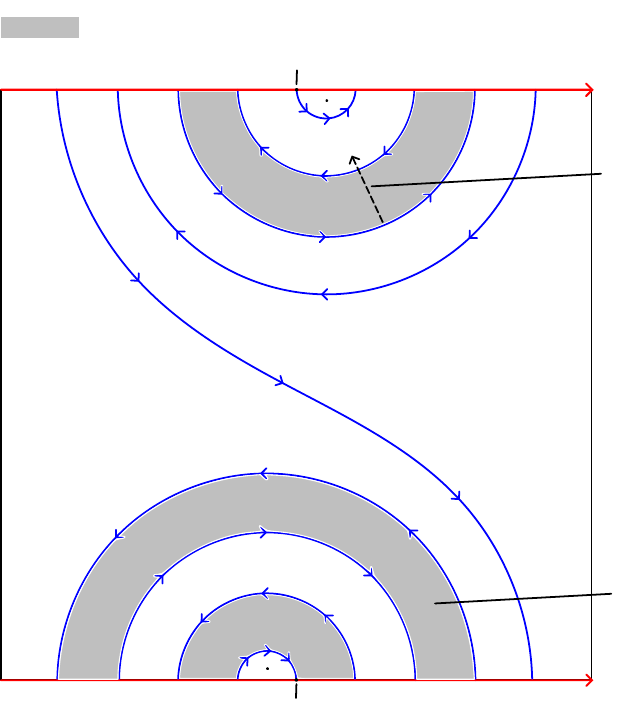}
      \put(13,95.2){sink tube}
      \put(39,80){$S_0$}
      \put(86.5,16.5){sink disk}
      \put(85,75){sink tube push}
      \put(28.8,-0.6){shared endpoint}
      \put(33,91){shared endpoint}
      \put(82,2){\color{red} $\alpha$}
      \put(58,38){\color{blue} $\beta$}
      \put(35.2,5.2){\small $z$}
      \put(43,85){\small $w$}
    \end{overpic}
    \caption{A primitive diagram for the twist knot $6_1$}
    \label{fig:6_1}
  \end{figure}

  In the proof of~\cite{lyu2024knot}, Proposition 3.4 we showed that for a primitive diagram with $\alpha$ in standard position, the two bigons share an endpoint. In short, we can look at the directions of the $\beta$-strands on the top and bottom (in other words, from different sides of $\alpha$). By Proposition~\ref{prop:primitive_standard}, at the ends of rainbow arcs the strands are going upwards and downwards alternatively, while at the ends of vertical arcs the strands are all in the same direction. This information provides a unique way to glue the top and bottom together; in particular the two ``rainbows'' are almost glued together, and the two bigons at the center of the rainbows share an endpoint. Again see Figure~\ref{fig:6_1} for an example.

  It follows that the sink tube is a spiral around the shared endpoint of the bigons, see Figure~\ref{fig:spiral}. When performing the modified sink tube push, every $\beta$-sink sector has its ``outer'' boundary $\beta$-arc moved onto its ``inner'' boundary $\beta$-arc. It follows that after the modified sink tube push, remaining (top or bottom) rainbow arcs on $\Sigma$ are all in the same direction. 

  \begin{figure}[!hbt]
    \begin{overpic}[scale=0.8]{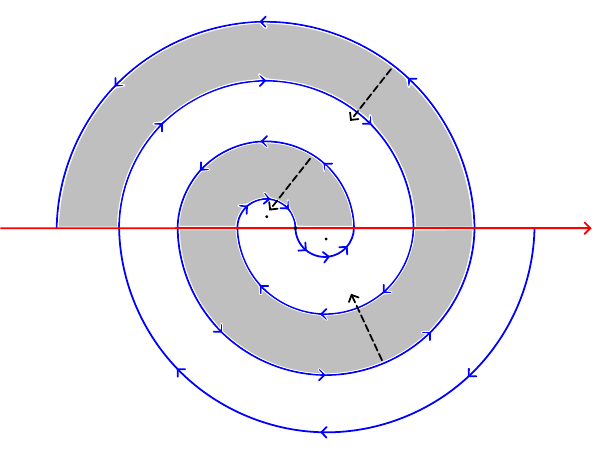}
      \put(42.2,39){\small $z$}
      \put(51.7,35.4){\small $w$}
      \put(48,29){\small $S_0$}
      \put(58,31){\small \color{blue} $\beta_1$}
      \put(67,26){\small \color{blue} $\beta_0$}
    \end{overpic}
    \caption{The sink tube spiral}
    \label{fig:spiral}
  \end{figure}

  The modified sink tube push eliminates some double points, and generates two new double points: $X$ near the $\beta$-sink bigon and $Y$ in the $\beta$-sink hexagon or octagon, see Figure~\ref{fig:modified_sink_tube_push}. The sink corner $D_X$ of $X$ contains the $\alpha$-disk (see the local picture of Figure~\ref{fig:modified_sink_tube_push}), hence is not a sink disk. The sink corner $D_Y$ of $Y$ is some region of the torus $\Sigma$ after pushing the arcs. Since the modified sink tube push does not move an entire vertical arc, $\partial D_Y$ contains a $\beta$-arc that comes from a vertical arc of the original (1,1) diagram. Moreover, $\partial D_Y$ contains multiple such $\beta$-arcs, since one needs to return to the same side of $\alpha$ (i.e. top or bottom) when traveling along $\partial D_Y$. Since our original (1,1) diagram is primitive, all these arcs are in the same direction; hence $D_Y$ cannot be a sink disk.

  The rest of the double points of $\mathcal{B}_{\alpha}''$ all come from double points of $\mathcal{B}_{\alpha}'$. The sink corners of these double points are all on the torus $\Sigma$. Let $D$ be any such sector. $D$ cannot be a bigon since the bigon is either punctured or removed. If $\partial D$ contains a $\beta$-arc coming from the vertical arcs of the diagram, then similar to the discussion before there are multiple such arcs and they are in the same direction; in particular $D$ is not a sink disk. If $\partial D$ does not contain such a $\beta$-arc, then since $D$ is not a bigon, $\partial D$ contains multiple (top or bottom) rainbow $\beta$-arcs. These arcs are again in the same direction as discussed before, and hence $D$ cannot be a sink disk.

  The branch locus of $\mathcal{B}_{\alpha}'$ contains 3 immersed circles. Since our splittings do not change the topology of $\partial_v N(\mathcal{B}_{\alpha}')$, the branch locus of $\mathcal{B}_{\alpha}''$ also consists of 3 immersed circles. These circles intersect at vertices of $S_0$ and are not isolated. Hence by Lemma~\ref{lem:sk_corner}, $\mathcal{B}_{\alpha}''$ is sink disk free. Again, our splittings do not change the topology of $\partial_h N(\mathcal{B}_{\alpha}')$, and $\partial_h N(\mathcal{B}_{\alpha}'')$ also consists of annuli. We can then apply Lemma~\ref{lem:sk_disk_free} and conclude that $\mathcal{B}_{\alpha}''$ fully carries a lamination. Hence $\mathcal{B}_{\alpha}'$ also fully carries a lamination. Finally, by Lemma~\ref{lem:remove_ann}, $\mathcal{B}_{S_0}$ also fully carries a lamination.
\end{proof}

\subsection{Type II reductions, and associated branched surfaces in~\cite{lyu2024knot}}
\label{subsec:5.4}

In this subsection we prove Lemma~\ref{lem:StrongIncoherentReduction}. The proof strategy is much similar to those in~\cite{lyu2024knot}, and actually makes use of the results there. We first quote the relevant definition and results below:

\begin{defn}[\cite{lyu2024knot}, Definition 2.13]
  Let $(\Sigma,\alpha,\beta,z,w)$ be a non-simple reduced (1,1) diagram. We orient the two curves so that the boundaries of the two innermost bigons appear clockwise and counter-clockwise. We call the clockwise bigon \textit{source bigon} and the counter-clockwise one \textit{sink bigon}. Notice here the source bigon is both $\alpha$-source and $\beta$-source, and the sink bigon both $\alpha$-sink and $\beta$-sink.
  
  We construct a branched surface from the torus $\Sigma$ and the two compression disks bounded by $\alpha,\beta$ on different sides of $\Sigma$. We always assume that the $\alpha$-disk is on the negative side of $\Sigma$ and the $\beta$-disk is on the positive side. Smooth the disks so that the branch directions point to the left-hand side of the oriented curves (when observing on the oriented torus from the positive side). Then remove (the interior of) the source bigon and a small open disk in the interior of the sink bigon (i.e. puncture the sink bigon). The resulting branched surface is called the \textit{branched surface associated to the (1,1) diagram}. 
  \label{def:associated}
\end{defn}

\begin{rem}
  Unlike the modified Heegaard branched surfaces defined in this paper, the associated branched surface to a non-simple, reduced (1,1) diagram is essentially unique. The only choice involved is the orientations of $(\alpha,\beta)$, where the two different choices differ by a hyperelliptic involution.
\end{rem}

\begin{prop}[\cite{lyu2024knot}, Proposition 3.1]
  Every branched surface associated to a non-simple reduced (1,1) diagram fully carries a lamination.
  \label{prop:associated}
\end{prop}

The proof of Proposition~\ref{prop:associated} relies on an inductive argument; more precisely, we used the following proposition:

\begin{prop}[\cite{lyu2024knot}, Proposition 3.11]
  Let $(\Sigma,\alpha,\beta,z,w)$ be a reduced, non-simple (1,1) diagram, with $\delta$ carrying $\beta$. Suppose $(\Sigma,\alpha,\delta,z,w)$ is non-simple and its associated branched surface fully carries a lamination. Then the branched surface associated to $(\Sigma,\alpha,\beta,z,w)$ also fully carries a lamination.
  \label{prop:associated_red}
\end{prop}

In the proof of Proposition~\ref{prop:associated_red} we made a series of constructions that relates the branched surface associated to $(\Sigma,\alpha,\beta,z,w)$ to the branched surface associated to $(\Sigma,\alpha,\delta,z,w)$. We will do similar things here to prove Lemma~\ref{lem:StrongIncoherentReduction}, but this time relating some modified Heegaard branched surface of $(\Sigma,\alpha,\beta,z,w)$ to the branched surface associated to $(\Sigma,\alpha,\delta,z,w)$. The constructions below are actually parallel to those in~\cite{lyu2024knot}, subsection 3.3.

Let $\mathcal{B}_{S_0}$ be the type I modified Heegaard branched surface of $(\Sigma,\alpha,\beta,z,w)$ associated to $S_0$. We want to prove that $\mathcal{B}_{S_0}$ fully carries a lamination. Let $\mathcal{B}_{\alpha}'$ be the branched surface obtained by removing the punctured $\alpha$-source ($\beta$-sink) bigon from $\mathcal{B}_{S_0}$ (see Figures~\ref{fig:remove_ann_I}~\ref{fig:remove_ann_II}). By Lemma~\ref{lem:remove_ann}, $\mathcal{B}_{S_0}$ fully carries a lamination if and only if $\mathcal{B}_{\alpha}'$ does.

We now perform the constructions.

\vspace{6pt}

\textbf{Step 1: Attach the $\delta$-annulus.} Let $\delta$ be the carrying curve of $\beta$ on the (1,1) diagram. It passes through all the $\beta$-parallel sectors and the hexagons or octagon. In particular, $\delta$ is disjoint from the reversed sector $S_0$ and the removed bigon of $\mathcal{B}_{\alpha}'$. We pick the orientation of $\delta$ such that in the $(\alpha,\delta)$ diagram the $\alpha$-source bigon is also $\delta$-source. Then we can attach an annulus to $\mathcal{B}_{\alpha}'$ by identifying one of its boundary components with the $\delta$-curve, so that it is attached to the same side of $\Sigma$ as the $\beta$-disk, and we moreover smooth it so that the branch direction always points to the left of the $\delta$-curve (when observed on $\Sigma$ from the positive side). We call this new branched surface $\mathcal{B''}$. 

\begin{lem}
  $\mathcal{B}_{\alpha}'$ fully carries a lamination if $\mathcal{B''}$ does.
  \label{lem:remove_ann_prime}
\end{lem}

\begin{figure}[!hbt]
  \begin{overpic}{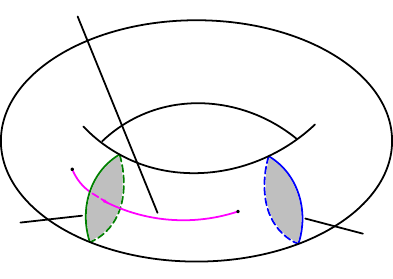}
      \put(1,13){$\color{green} \delta$}
      \put(93,10){$\color{ao} \beta$}
      \put(1,70){(1,1)-knot arc inside the solid torus}
      \put(14.5,28){$z$}
      \put(57,19){$w$}
      \put(100,25){$\Sigma$}
  \end{overpic}
  \caption{The $\delta$-disk and the (1,1)-knot}
  \label{fig:delta_disk}
\end{figure}

\begin{proof}
  The annulus can actually be seen as attached to a horizontal boundary component of $\mathcal{B}_{\alpha}'$. By Lemma~\ref{lem:remove_ann}, we only need to check that $\delta$ is a core curve of this horizontal boundary annulus. In fact, we show that $\delta$ is also a knot meridian when we identify $N(\mathcal{B}_{\alpha}')$ with the (1,1) knot complement. By definition the carrying curve $\delta$ is isotopic to $\beta$ on $\Sigma$ (where we forget the basepoints $z,w$), hence it also bounds a disk in the solid torus of the Heegaard splitting (of the ambient space). However, $\delta$ is not isotopic to $\beta$ on $(\Sigma,z,w)$ (having fewer intersection with $\alpha$), hence the $\delta$-disk would intersect the (1,1) knot essentially once, see Figure~\ref{fig:delta_disk}. It follows that the $\delta$-curve is indeed meridional in the (1,1) knot complement. Now since $\partial_v N(\mathcal{B}_{\alpha}')$ consists of meridional annuli, this $\delta$-curve is necessarily a core curve of some horizontal boundary annulus.
\end{proof}

\vspace{6pt}

\textbf{Step 2: Take the difference.} Let $\mathcal{C}$ be the branched surface associated to $(\Sigma,\alpha,\delta,z,w)$. By Proposition~\ref{prop:associated}, $\mathcal{C}$ fully carries a lamination. Let $\mathcal{C'}$ be the branched surface obtained by removing a disk in the interior of the $\delta$-disk of $\mathcal{C}$. Then $\mathcal{C'}$ also fully carries a lamination.

Since the $\delta$ curve passes and only passes through the $\beta$-parallel sectors and the hexagons or octagon, the reversed sector $S_0$ and the removed $\alpha$-source ($\beta$-sink) bigon of $\mathcal{B''}$ are contained in the $(\alpha,\delta)$-source bigon. Since we removed the $(\alpha,\delta)$-source bigon in the construction of $\mathcal{C}$, we can actually regard $\mathcal{C'}$ as a sub-branched surface of $\mathcal{B''}$. We can then take the difference $\mathcal{A}=\mathcal{B''}-\mathcal{C'}$, and attach $\mathcal{A}$ to the two horizontal boundary components $\partial_h^{\pm}$ of $\mathcal{C'}$ to form the branched surface $\mathcal{A'}$ (we assume that the $\beta$-disk is attached to $\partial_h^+$).

\begin{rem}
  It is important here that the diagram is ($\alpha$-)strongly incoherent. In fact, if it is not, then the reversed region cannot be contained in the source bigon of the $(\alpha,\delta)$ diagram (here the reduction is necessarily of type I, see Figure~\ref{fig:reductions}.$(a)$).
\end{rem}

\begin{lem}
  $\mathcal{B''}$ fully carries a lamination if $\mathcal{A'}$ does.
  \label{lem:attaching_to_subsurface}
\end{lem}

\begin{proof}
  The proof is the same as that of~\cite{lyu2024knot}, Lemma 3.13. In fact, here the only problem is to extend the lamination carried by $\mathcal{A'}$ into the cusp(s) of $\mathcal{C'}$, which relies on properties of $\mathcal{C'}$, and has nothing to do with what $\mathcal{A'}$ exactly is.
\end{proof}

\textbf{Step 3: Collapse the boundary train track.} $\partial_h^{\pm}$ are two pairs of pants, each with 3 boundary circles corresponding respectively to the $\mathcal{C'}$-cusp, the puncture in the $\delta$-disk, and the puncture in the $(\alpha,\delta)$-sink bigon. When attaching $\mathcal{A}$ to $\partial_h^{\pm}$ to get $\mathcal{A'}$, we see that $\mathcal{A}$ only intersects the boundary circles of $\partial_h^{\pm}$ that correspond to the cusp of $\mathcal{C'}$ (we call them $l_{C'}^{\pm}$). It follows that $\mathcal{A'}$ has a bunch of boundary circles, and a boundary train track at this $\mathcal{C'}$-cusp. For technical reasons (in order to use Lemma~\ref{lem:sk_disk_free}) we need to collapse this boundary train track. We remark that the operations below are local (i.e. happens in a small neighborhood of the boundary train track), and hence are identical to those in Step 3 in~\cite{lyu2024knot}, subsection 3.3.

Let $\mathcal{T}$ be the boundary train track at the $\mathcal{C'}$-cusp. We notice that $\mathcal{A}$ intersects $l_{C'}^{\pm}$ each in two points, corresponding to the two vertices of the $(\alpha,\delta)$-source bigon. In particular $\mathcal{T}$ looks like Figure~\ref{fig:cusp_collapsing} left.

\begin{figure}[!hbt]
  \begin{overpic}[scale=0.6]{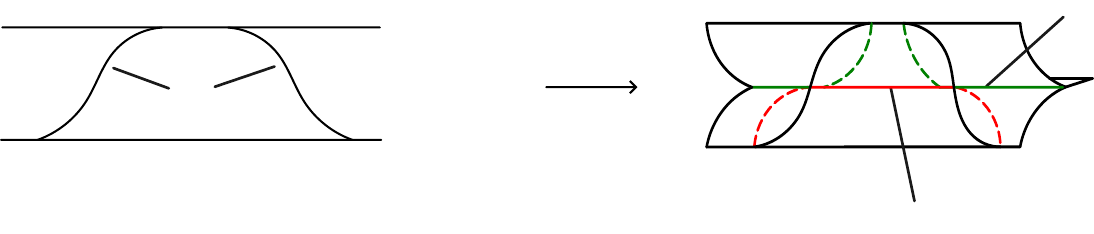}
      \put(40,12){$\times \;I$}
      \put(35.5,7){$l_{\mathcal{C'}}^+$}
      \put(35.5,18){$l_{\mathcal{C'}}^-$}
      \put(16,11.5){$\mathcal{A}$}
      \put(83,1){$\color{red}\alpha$}
      \put(97.5,20){$\color{green}\delta$}
      \put(72,14){\tiny$M$}
      \put(87,14){\tiny$N$}
      \put(6,3){\small boundary train track}
  \end{overpic}
\caption{Collapsing the train track $\mathcal{T}$ at the $\mathcal{C'}$-cusp}
\label{fig:cusp_collapsing}
\end{figure}

Now we collapse/pinch $\mathcal{T}$ into a single circle as shown in Figure~\ref{fig:cusp_collapsing}. We map the two $\mathcal{A}$-arcs of the boundary train track to two points, and identify the corresponding $l^{\pm}_{\mathcal{C'}}$-arcs that are cut out by the $\mathcal{A}$-arcs. We note that after collapsing the two double points $M,N$ (obtained by collapsing the $\mathcal{A}$-arcs) actually correspond to the vertices of the $(\alpha,\delta)$-source bigon. 

The resulting branched surface $\mathcal{A''}$ has a circle boundary at the $\mathcal{C'}$-cusp instead of a train track. The branch locus of $\mathcal{A''}$ consists of 4 immersed circles, and they correspond to the branch locus of $\mathcal{B''}$.

\begin{lem}
  $\mathcal{A'}$ fully carries a lamination if $\mathcal{A''}$ does.
  \label{lem:cusp_collapsing}
\end{lem}

\begin{proof}
  The proof is the same as that of~\cite{lyu2024knot}, Lemma 3.14 (recall all the operations are local).
\end{proof}

\begin{lem}
  $\mathcal{A''}$ fully carries a lamination.
  \label{lem:the_hard_one}
\end{lem}

It is clear that now Lemma~\ref{lem:the_hard_one} would imply Lemma~\ref{lem:StrongIncoherentReduction}. We nevertheless formally state the proof:

\begin{proof}[Proof of Lemma~\ref{lem:StrongIncoherentReduction}]
  By Lemma~\ref{lem:the_hard_one} $\mathcal{A''}$ fully carries a lamination. Then by Lemmas~\ref{lem:cusp_collapsing}~\ref{lem:attaching_to_subsurface}~\ref{lem:remove_ann_prime}, we know $\mathcal{B}_{\alpha}'$ fully carries a lamination. Finally by Lemma~\ref{lem:remove_ann} $\mathcal{B}_{S_0}$ fully carries a lamination, as desired.
\end{proof}

It remains for us to prove Lemma~\ref{lem:the_hard_one}. Before we proceed, we first briefly visualize $\mathcal{A''}$ and justify our constructions.

As we stated before, the branch locus of $\mathcal{A''}$ consists of 4 immersed circles, and they correspond to the 4 immersed circles of the branch locus of $\mathcal{B''}$ (3 from $\mathcal{B}_{\alpha}'$, 1 from the attached $\delta$-annulus). The difference here is that, by attaching $\mathcal{A}$ to the horizontal boundary of $N(\mathcal{C'})$ instead of just $\mathcal{C'}$, we actually ``blow up'' the branched surface. This simplifies the intersections of the branch locus as immersed circles; in other words, we get fewer double points.

\begin{figure}[!hbt]
  \begin{overpic}{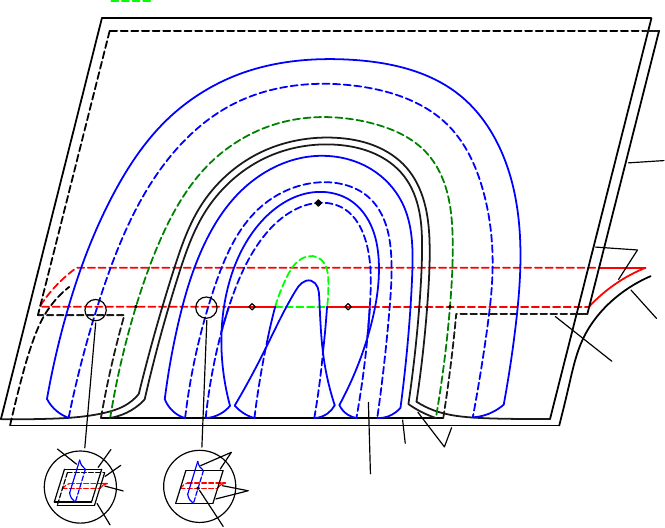}
    \put(23.5,78.5){\small boundary of the removed bigon (not part of the branch locus)}
    \put(46,43){\small $S_0$}
    \put(100.5,54.5){\small $\partial_h^-$}
    \put(96.4,41.5){\small $\partial_h^+$}
    \put(99,30){\small $\partial_h^-$}
    \put(92,23){\small $\mathcal{C'}$-cusp}
    \put(66,9){\small $\partial_h^-$}
    \put(59.5,9.5){\small $\partial_h^+$}
    \put(49,5){\small sink disk candidate}
    \put(17.5,33.8){\small $M$}
    \put(68,33.8){\small $N$}
    \put(16,12){\tiny $\partial_h^+$}
    \put(18.4,9){\tiny $\partial_h^-$}
    \put(18.6,4.5){\tiny $\partial_h^+$}
    \put(6,-1.6){\tiny collapsed train track}
    \put(6.6,12){\tiny $\mathcal{A}$}
    \put(34,11.7){\tiny $\mathcal{A}$}
    \put(38,5){\tiny $\partial_h^+$}
    \put(32,-1.6){\tiny double point}
  \end{overpic}
  \caption{A local picture of $\mathcal{A''}$ near the $(\alpha,\delta)$-source bigon}
  \label{fig:local_pic_App}
\end{figure}

Figure~\ref{fig:local_pic_App} depicts a local picture of $\mathcal{A''}$ near the $(\alpha,\delta)$-source bigon. Recall $\mathcal{A}=\mathcal{B''}-\mathcal{C'}$ consists of the $\beta$-disk and sectors in the $(\alpha,\delta)$-source bigon. As we attach $\mathcal{A}$ to $\partial_h^{\pm}$ and collapse the boundary train track to get $\mathcal{A''}$, we can still regard the $\beta$-disk as attached to ``the torus $\Sigma$'', where outside the $(\alpha,\delta)$-source bigon it is attached to $\partial_h^+$, and inside the $(\alpha,\delta)$-source bigon it is attached to the rest of $\mathcal{A}$.

Double points of $\mathcal{A''}$, being the intersections of its branch locus, still come from the intersections of the $\alpha$-curve and the $\beta$- and $\delta$-curves. The point here is that, outside the $(\alpha,\delta)$-source bigon $\alpha$ does not really intersect $\beta$ or $\delta$, when regarded as parts of the branch locus of $\mathcal{A''}$. Recall the cusp of $\mathcal{C'}$ comes from the cusp of the branched surface $\mathcal{C}$ associated to the $(\alpha,\delta)$ diagram. In particular this cusp can be described as ($\alpha\cup\delta-\partial$(the $(\alpha,\delta)$-source bigon)). Hence outside the $(\alpha,\delta)$-source bigon, $\alpha$ as part of the branch locus of $\mathcal{A''}$ actually comes from collapsing the boundary train track at the $\mathcal{C'}$-cusp, see Figure~\ref{fig:cusp_collapsing} and Figure~\ref{fig:local_pic_App}. As can be seen in Figure~\ref{fig:cusp_collapsing}, there are only two double points on the branch locus obtained by collapsing the boundary train track at the $\mathcal{C'}$-cusp, and these two points correspond to the vertices $M,N$ of the $(\alpha,\delta)$-source bigon. In particular outside the $(\alpha,\delta)$-source bigon $\alpha$ does not intersect other parts of the branch locus of $\mathcal{A''}$, see the left local picture at the bottom-left of Figure~\ref{fig:local_pic_App}.

On the other hand, inside the $(\alpha,\delta)$-source bigon $\alpha$ and $\beta$ normally intersect as parts of the branch locus of $\mathcal{A''}$, and we can spot the double points along the boundary $\alpha$-arc of the $(\alpha,\delta)$-source bigon. See the right local picture at the bottom-left of Figure~\ref{fig:local_pic_App}. In particular, sink corners of such double points could be sink disks, again see Figure~\ref{fig:local_pic_App}.

We are now ready to prove Lemma~\ref{lem:the_hard_one}. The idea is to split the branched surface by pushing arcs to get a new branched surface that is sink disk free. The techniques used here are similar to those in~\cite{lyu2024knot}, subsection 3.5.

\begin{proof}[Proof of Lemma~\ref{lem:the_hard_one}]
  We divide this long proof into several parts. In parts 1$\sim$5 we show that we can always split $\mathcal{A''}$ to obtain a branched surface with all double points safe. In part 6 we show that this implies that $\mathcal{A''}$ fully carries a lamination.

  \vspace{6pt}

  \textbf{Part 1: modified sink tube push on $\mathcal{A''}$.}

  The double points of $\mathcal{A''}$ appear on the boundary $\alpha$-arc of the $(\alpha,\delta)$-source bigon as its end points $M,N$ and its intersections with the $\beta$-arc. The sink corner of $M,N$ is the annulus obtained by collapsing the boundary train track, see Figure~\ref{fig:cusp_collapsing}. In particular $M,N$ are safe.
  
  Since we removed the $(\alpha,\beta)$-bigon inside the $(\alpha,\delta)$-source bigon, its vertices are not double points (see the light green curves in Figure~\ref{fig:local_pic_App}). If there are only two $\beta$-arcs in the $(\alpha,\delta)$-source bigon, then there are only two double points that come from the $(\alpha,\beta)$ intersections. Moreover, the sink corners of these two double points have some parts of the $\delta$-curve on their boundaries, along which the branch directions point outwards; in particular they are not sink disks. Hence in this case $\mathcal{A''}$ already has all double points safe. From now on we suppose there are at least 3 $\beta$-arcs inside the $(\alpha,\delta)$-source bigon.

  As stated before, we can still regard the $\beta$-disk as attached to some torus $\Sigma^+$\footnote{Here we use the $+$ sign to emphasize that this ``torus'' is slightly different from the original torus $\Sigma$, and that we are mostly attaching to $\partial_h^+$}, where outside the $(\alpha,\delta)$-source bigon it is attached to $\partial_h^+$, and inside the $(\alpha,\delta)$-source bigon it is attached to the rest of $\mathcal{A}$. We can then describe splittings of $\mathcal{A''}$ as pushing $\beta$-arcs on $\Sigma^+$.

  Notice that our ``torus'' $\Sigma^+$ is actually cut along the $\delta$-curve. That being said, $\Sigma^+$ actually has 2 boundary circles corresponding to $\delta$. One of them comes from the cusp of $\mathcal{A''}$, which we will denote as $\delta_{cusp}$ (see the dark green curve in Figure~\ref{fig:local_pic_App}); the other is the $\delta$-boundary circle of $\partial_h^+$, which we denote as $\delta_{\partial_h}$. Moreover, we reversed the sector $S_0$ and removed the $(\alpha,\beta)$-bigon inside the $(\alpha,\delta)$-source bigon in the construction of $\mathcal{B''}$. Hence when pushing $\beta$-arcs on $\Sigma^+$, we need to meet the following conditions:
  
  \begin{enumerate}
    \item we do not move the boundary arcs $\beta_0,\beta_1$ of $S_0$, and
    \item we do not push across $\delta$.
  \end{enumerate}

  Now that the $(\alpha,\delta)$-source bigon contains at least 3 $\beta$-arcs, $\beta_0$ is not the outermost one. Since $\beta$-arcs in the $(\alpha,\delta)$-source bigon are necessarily in alternating directions, $\beta_0$ is bounded by $S_0$ and a $\beta$-sink sector. It follows that we can perform a \textit{modified $\beta$-sink tube push} on $\Sigma^+$, using the $\beta$-sink tube defined by the $(\alpha,\beta)$ diagram. We can check that this operation meets the conditions above. In fact, by definition this operation is not moving $\beta_0$ and $\beta_1$. Moreover, since the $\delta$-curve only passes through $\beta$-parallel sectors and the hexagons or octagon, this operation also does not push across $\delta$.

  \begin{figure}[!hbt]
    \begin{overpic}[scale=0.33]{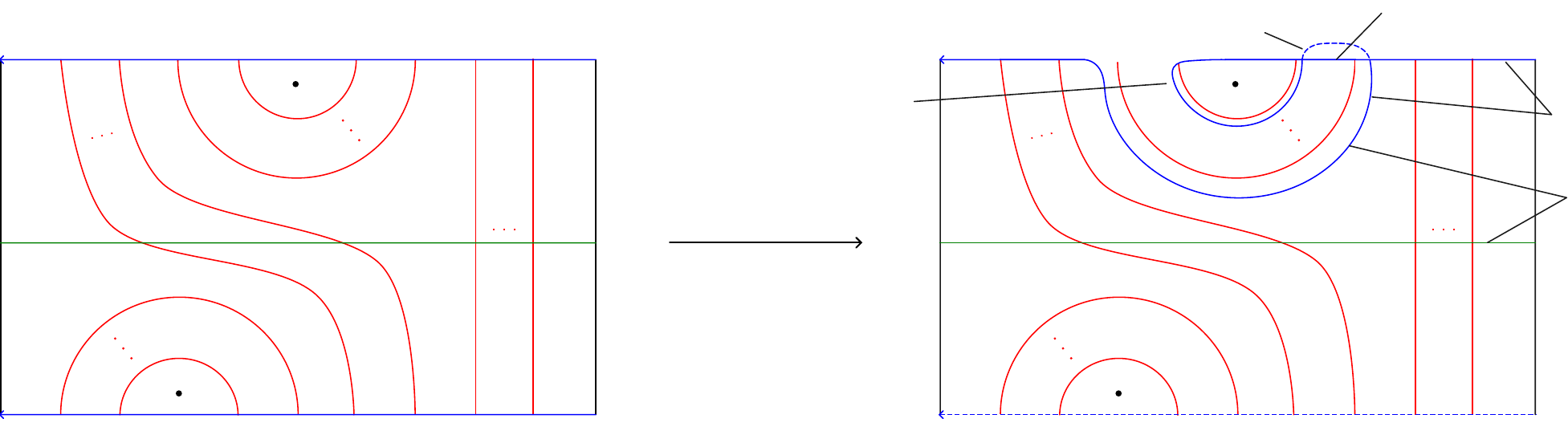}
        \put(10,3){\tiny$z$}
        \put(19,23){\tiny$w$}
        \put(36,0){\tiny $\color{ao}\beta$}
        \put(34.3,20){\tiny $\color{red}\alpha$}
        \put(36,11){\tiny $\color{green}\delta$}
        \put(100,16){\tiny parallel}
        \put(100,14){\tiny $\delta$-circles}
        \put(88.4,28){\tiny $\beta_c$}
        \put(99,20){\tiny $\beta_{\delta}$}
        \put(56,21){\tiny $\beta_s$}
        \put(18,27){\tiny \color{blue}$\beta_1$}
        \put(15.2,24.8){\tiny $\overbrace{\qquad\quad\:}$}
        \put(54,27){\tiny arc pushed onto the $\beta$-disk}
    \end{overpic}
    \caption{Modified ($\beta$-)sink tube push for $\beta$ in standard position}
    \label{fig:connecting_arc}
\end{figure}

  After this operation the $\beta$-curve that remains on $\Sigma^+$ (including $\beta_1$) consists of two loops and an arc connecting them, see Figure~\ref{fig:connecting_arc}, where we put $\beta$ in standard position instead. There is a small loop containing $\beta_1$, which we denote as $\beta_s$ ($s$ for \textit{sink}, since this loop bounds a disk containing the $\beta$-sink bigon). We denote the other loop $\beta_{\delta}$. Notice that as shown in Figure~\ref{fig:connecting_arc}, $\beta_{\delta}$ is isotopic to $\delta$ in the 2-pointed torus $(\Sigma,z,w)$, as they both intersect each vertical $\alpha$-arc exactly once. Hence on $\Sigma^+$ our $\beta_{\delta}$ is actually isotopic to one of the $\delta$-boundary components of $\Sigma^+$. We denote the arc connecting the loops $\beta_c$.
  
  The modified sink tube push on $\Sigma^+$ creates two new double points on the two loops $\beta_s$ and $\beta_{\delta}$ respectively. The new double point on $\beta_{\delta}$ is safe, as its sink corner has boundary on the $\delta$-boundary or cusp. The new double point on $\beta_s$ is also safe, since its sink corner would contain the $\alpha$-disk (recall the local picture in Figure~\ref{fig:modified_sink_tube_push}). Remaining double points that may not be safe are the intersections of $\beta_c$ and the boundary $\alpha$-arc of the $(\alpha,\delta)$-source bigon.
  
  \vspace{6pt}

  \textbf{Part 2: reduce $\Sigma^+$ to a 4-basepoint sphere $\tilde{S}$.}

  Again, it is possible to reduce our ``pushing arcs'' models and regard $\beta_c$ as a rational tangle strand on a 4-punctured sphere. The argument here is similar to that in~\cite{lyu2024knot}, subsection 3.5, but also slightly different. We restate the procedure in details below. We again remark that the following ``collapsing'' operations are purely for simplifying our characterization of ``pushing arcs'', and do not actually happen on our branched surface. One may compare the following Figure~\ref{fig:collapsing_tangle} to~\cite{lyu2024knot}, Figure 24.

  \begin{figure}[!hbt]
    \begin{overpic}[scale=0.65]{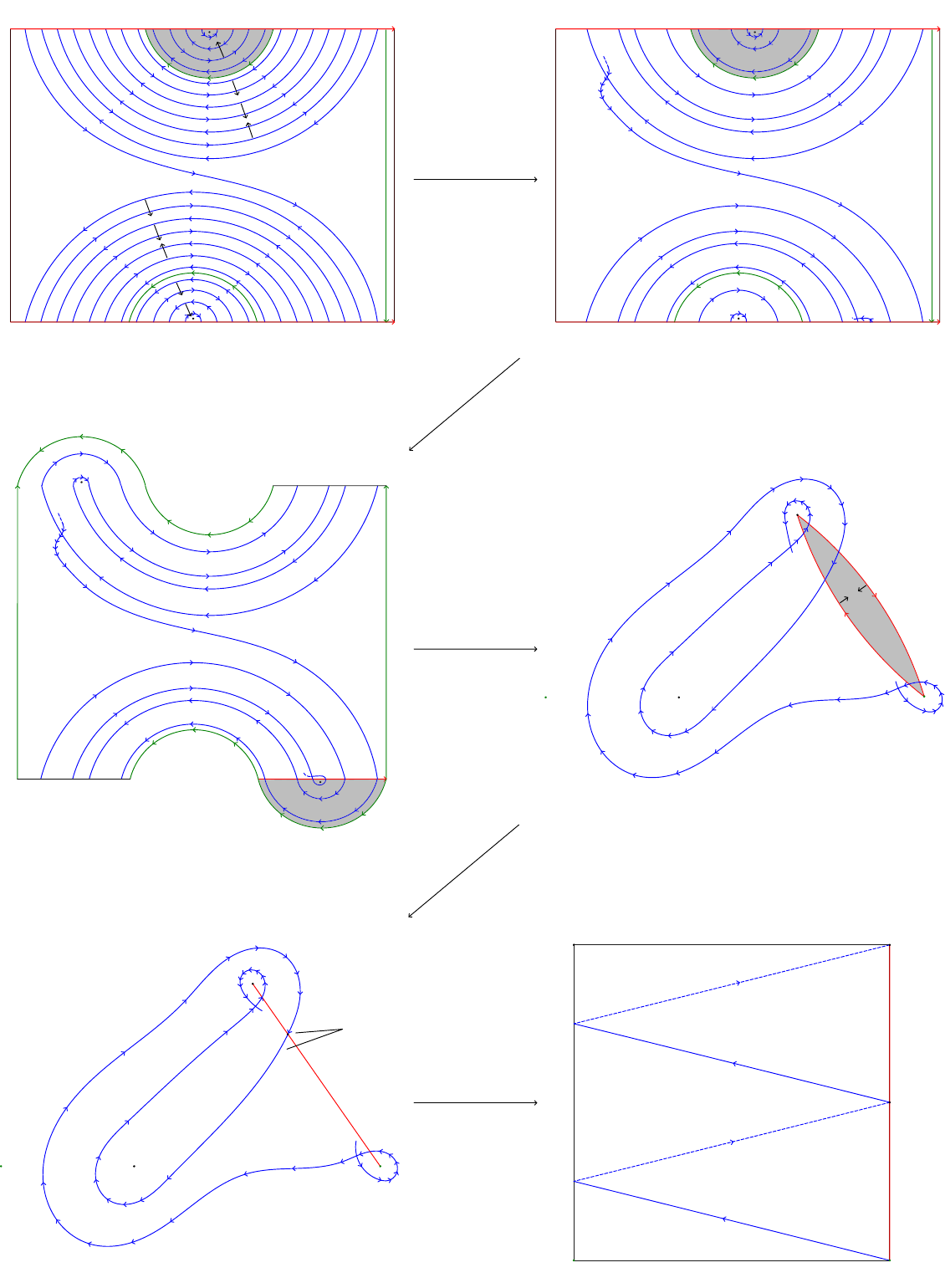}
        \put(9.5,72){\tiny knot (23,11,1,7)}
        \put(29,74.2){\Tiny $\color{red} \alpha$}
        \put(25,83){\Tiny $\color{ao} \beta$}
        \put(29,90){\Tiny $\color{green} \delta$}
        \put(9.4,74){\Tiny $M$}
        \put(19.3,74){\Tiny $N$}
        \put(29.3,98){\Tiny $M$}
        \put(10.6,98){\Tiny $N$}
        \put(14.6,74.2){\Tiny $z$}
        \put(15.6,97.9){\Tiny $w$}
        \put(31.1,87){\tiny $\beta$-sink tube push}
        \put(51.7,74){\Tiny $M$}
        \put(61.6,74){\Tiny $N$}
        \put(71.7,98){\Tiny $M$}
        \put(53.7,98){\Tiny $N$}
        \put(56.9,74.2){\Tiny $z$}
        \put(58,97.9){\Tiny $w$}
        \put(37,68){\tiny cut along $\delta$}
        \put(5.8,63.2){\Tiny $z(d)$}
        \put(25,39.7){\Tiny $w(a)$}
        \put(1.5,50){\Tiny $\color{green} \delta_{\partial_h}(c)$}
        \put(30.1,45){\Tiny $\color{green} \delta_{cusp}(b)$}
        \put(30.8,51){\tiny collapse circle boundaries}
        \put(61,59.6){\Tiny $a$}
        \put(53,46){\Tiny $d$}
        \put(41.4,45.3){\Tiny $\color{green} c$}
        \put(70.7,45.6){\Tiny $\color{green} b$}
        \put(37,32){\tiny collapse $\alpha$-bigon}
        \put(21.4,19.5){\Tiny $e$}
        \put(27,20){\tiny sink corners of $e$}
        \put(18.8,23.3){\Tiny $a$}
        \put(10.5,9.5){\Tiny $d$}
        \put(0,9.7){\Tiny $\color{green} c$}
        \put(28.7,9.1){\Tiny $\color{green} b$}
        \put(27,16.8){\tiny collapsing $\beta$-loops and}
        \put(27,15.5){\tiny put in standard position}
        \put(69.2,14.3){\Tiny $e$}
        \put(43.6,27){\Tiny $d$}
        \put(69,27){\Tiny $a$}
        \put(69.3,2){\Tiny $\color{green} b$}
        \put(43.5,2){\Tiny $\color{green} c$}
        \put(47.5,0){\tiny strand of rational tangle $\frac{1}{4}$}
    \end{overpic}
    \caption{Rational tangle strand from knot (23,11,1,7)}
    \label{fig:collapsing_tangle}
  \end{figure}
  
  Recall that our $\Sigma^+$ is actually cut along $\delta$, with two boundary $\delta$-circles $\delta_{cusp}$ and $\delta_{\partial_h}$. Moreover, there is also a boundary circle at the punctured basepoint $z$. Since we can never push arcs across these boundary circles, we can collapse(quotient) them to points. If we further mark the midpoint of the boundary $\alpha$-arc of the $\beta$-sink bigon (which also cannot be moved over by our pushing arcs operations), we get a sphere with 4 basepoints marked. We denote it by $(\tilde{S},a,b,c,d)$, where $\tilde{S}$ is the sphere we get, $a$ is the midpoint of the boundary $\alpha$-arc of the $\beta$-sink bigon, $b$ is the $\delta_{cusp}$ boundary circle, $c$ is the $\delta_{\partial_h}$ circle, and $d$ is the boundary circle at $z$, see the first four pictures of Figure~\ref{fig:collapsing_tangle}, where we use the knot (23,11,1,7) as an example and $\alpha$ is placed in standard position. We remark that since double points are all on the boundary $\alpha$-arc of the $(\alpha,\delta)$-source bigon, starting from the third picture in Figure~\ref{fig:collapsing_tangle} we only draw this $\alpha$-arc instead of the whole $\alpha$-curve.
    
  Now as in the fourth picture of Figure~\ref{fig:collapsing_tangle}, the $(\alpha,\delta)$-source bigon (see the shaded regions) becomes a bigon connecting two basepoints on $\tilde{S}$, bounded only by $\alpha$-arcs (that come from the boundary $\alpha$-arc of the source bigon). We can then collapse it to a \textit{single} arc connecting the basepoints $a,b$. More precisely, the bigon can be parametrized by the unit disk, so that $(\pm 1,0)$ correspond to the vertices $a,b$ and the $\beta$ arcs intersecting the bigon are vertical; by collapsing we mean a projection of the unit disk to the $x$-axis. See this procedure in Figure~\ref{fig:collapsing_tangle} from the fourth picture to the fifth picture.
    
  Recall again that after the modified sink tube push the $\beta$-arcs left on ``$\Sigma$'' are two loops $\beta_s$, $\beta_{\delta}$ and a connecting arc $\beta_c$. Now in our 4-point sphere $(\tilde{S},a,b,c,d)$, each of the two $\beta$-loops bounds a small disk with a basepoint inside. We then further collapse(quotient) the loops and the disks they bound to the basepoints inside. Then $\beta_c$ becomes an arc connecting the basepoint $a$ to either $b$ or $c$, and thus may be viewed as a strand of a rational tangle. See the last two pictures in Figure~\ref{fig:collapsing_tangle}.

  We claim that our collapsings do not collapse possible sink disks. Notice that the branch direction along each of the two $\beta$-loops points to the small disk with basepoint inside. When collapsing the $\beta$-loops to the corresponding basepoints, we only ``collapsed'' the sink corners of the double points on the two $\beta$-loops. But we have previously argued that these two double points are safe, hence no sink disk is collapsed. On the other hand, the bigon in the fourth picture of Figure~\ref{fig:collapsing_tangle} is bounded by $\alpha$ opposite its branch direction, so when collapsing it we do not collapse any sink corner. However, we do collapse the double points in pairs (that are originally connected by $\beta$-arcs in the $(\alpha,\delta)$-source bigon), so in the 4-point sphere after collapsings each intersection of $(\alpha,\beta)$ has two sink corners, see the fifth picture of Figure~\ref{fig:collapsing_tangle}.

  We denote the two arcs connecting basepoints on $\tilde{S}$ as $\tilde{\alpha},\tilde{\beta}$. We fix a frame so that $\tilde{\alpha}$ represents a strand of the rational tangle $\frac{1}{0}$. Since $\tilde{\alpha},\tilde{\beta}$ come from a reduced (1,1)-diagram, their position on $\tilde{S}$ is tight, i.e. having no trivial bigons. Then $\tilde{\beta}$ is almost\footnote{To completely determine $\tilde{\beta}$ we need a bit more information, such as the starting point; for the purpose of this paper we only need the ``standard position'' characterization described below, and the fact that every $\tilde{\beta}$ has such a characterization.} \textit{characterized by} a rational number $r=\frac{p}{q}$, where $p,q\geq 0$, $(p,q)=1$. In fact, we can place $\tilde{\beta}$ in certain ``standard position'' as in Figure~\ref{fig:collapsing_tangle} bottom-right, where the rectangle is bubbled to represent the sphere $S$. The boundary segments of the bubbled rectangle are chosen to represent rational tangles $\frac{1}{0}$ and $\frac{0}{1}$. The solid $\tilde{\beta}$-segments are in the front of the bubbled rectangle, while the dashed segments are on the back of it. If we regard the rectangle as the unit rectangle, then these segments are all of slopes $\pm \frac{p}{q}$. We also recall that the numbers $p,q$ actually count the total intersections of $\tilde{\beta}$ with the $\frac{0}{1}$ and $\frac{1}{0}$ arcs $bc$ and $ab$, respectively, where we count 2 for each intersection other than the basepoints, and 1 for each intersection at the basepoints. 

  Since $\tilde{\alpha}$ does intersect $\tilde{\beta}$ at the non-basepoint $e$ and basepoint $a$, we know $q\geq 3$, and thus $p\geq 1$. \textbf{We claim that if $p=1$, then all double points are safe.} In fact, suppose we start tracing the $\tilde{\beta}$-arc from $a$; then since it will not intersect horizontal segments $ad$ and $bc$ again before getting to the other endpoint, it will necessarily wind from $a$ all the way down to the bottom, see Figure~\ref{fig:collapsing_tangle} bottom-right. Hence $\tilde{\beta}$ always intersects (the interior of) $\tilde{\alpha}$ in the same direction. Now consider the segments of the $\tilde{\alpha}$-arc cut out by $\tilde{\beta}$. We claim that none of them is the boundary arc of a sink disk. In fact, except the segments with an endpoint $a$ or $b$, all other segments are ``$\beta$-parallel'' back on $\Sigma^+$, thus cannot appear on the boundary of a sink disk. For the segment with an endpoint $b$, we notice that back on $\Sigma^+$ this $b$ is a branch locus circle, whose branch direction points to the other side (i.e. not the side our $\alpha$-segments are attached to)\footnote{Here $b$ is just $\delta_{cusp}$ by definition. We state the argument in a slightly more general sense to also care for later situations, where we might have collapsed a few more loops to $b$.}. In particular our $\alpha$-segments cannot be boundary arcs of sink disks. The segment with endpoint $a$ is $ae$, and back on $\Sigma^+$ correspond to the boundary $\alpha$-arcs of the reversed sector $S_0$, where the branch direction points to the $\alpha$-disk. Hence they cannot be boundary arcs of sink disks on $\Sigma^+$. It follows that no sink corners at the $(\alpha,\beta)$-intersections are actually sink disks, and hence our branched surface (obtained by doing a modified $\beta$-sink tube push to $\mathcal{A''}$) already has all double points safe. \textbf{From now on we suppose $p\geq 2$.} 

  \vspace{6pt}

  \textbf{Part 3: set up the $\tilde{\beta}$-sink tube on $\tilde{S}$.}

  We will henceforth describe our splittings as pushing the $\tilde{\beta}$-arcs on $\tilde{S}$. Recall that when pushing $\beta$ on $\Sigma^+$ we need to fix arcs $\beta_0,\beta_1$ and cannot move across $\delta$. Here on $\tilde{S}$ since the $\delta$-boundary circles are already collapsed to basepoints, we automatically will not push $\tilde{\beta}$ across them. The arc $\beta_1$ is formally collapsed to the basepoint $a$, and the arc $\beta_0$ is collapsed to the point $e$ that appears on $\tilde{\alpha}$ as the $(\tilde{\alpha},\tilde{\beta})$-intersection next to $a$, see the last two pictures in Figure~\ref{fig:collapsing_tangle}. Hence when pushing $\tilde{\beta}$ on $\tilde{S}$, \textbf{besides fixing the basepoints, we also need to fix $e$}.

  We now introduce the $\tilde{\beta}$-sink tube on $\tilde{S}$. This part is identical to the first half of Step 3 of~\cite{lyu2024knot}, proof of Lemma 3.15. We briefly summarize it here and refer the readers to~\cite{lyu2024knot} for details. We can put $\tilde{\beta}$ in ``standard position'' as described before, and take the boundary of the rectangle $abcd$ ($a$-$b$-$c$-$d$-$a$) as a simple closed curve $\epsilon$ on $\tilde{S}$. Now $\tilde{\beta}$ and $\epsilon$ cut $\tilde{S}$ into several regions. If we take the intersections of $\tilde{\beta}$ and $\epsilon$ as the vertices of these regions (notice here a basepoint is not necessarily a vertex), then when $p,q\geq 2$, we obtain 2 bigons (each contains a non-vertex basepoint on the boundary), 2 triangles (each has one basepoint vertex), and many quadrilaterals. See Figure~\ref{fig:bigons_n_triangles} for an example of local pictures at the basepoints. We note that when $p=2$ the two triangles share a boundary $\tilde{\beta}$-arc, as depicted in Figure~\ref{fig:bigons_n_triangles}.$(c)$.

  \begin{figure}[!hbt]
    \begin{overpic}[scale=0.9]{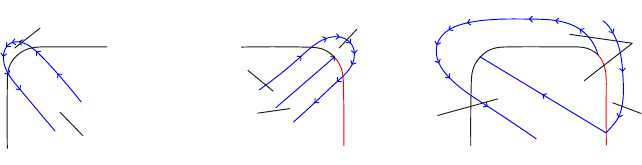}
        \put(5,-2){$(a)$}
        \put(37,-2){$(b)$ $p,q\geq 3$}
        \put(78,-2){$(c)$ $p=2$}
        \put(2.5,15.5){\Tiny $d$}
        \put(12,2.2){\Small sink sector}
        \put(7,21.5){\Small (sink) bigon}
        \put(52.5,16.5){\Tiny $a$}
        \put(25,15.3){\Small parallel sector}
        \put(22,7.5){\Small source sector}
        \put(44,22){\Small (source) triangle}
        \put(73,16.5){\Tiny $d$}
        \put(92,15.8){\Tiny $a$}
        \put(62,6){\Small sink}
        \put(62,3.3){\Small sector}
        \put(100.5,7){\Small source}
        \put(100.5,4.3){\Small sector}
        \put(99.5,18){\Small triangles}
    \end{overpic}
    \caption{Bigons and Triangles at the basepoints}
    \label{fig:bigons_n_triangles}
\end{figure}

  Since $\tilde{S}$ succeeds a fixed orientation from $\Sigma^+$, and $\tilde{\beta}$ is oriented, we can define ($\tilde{\beta}$-)sink, source, and parallel $\epsilon$-arcs and sectors as in Definition~\ref{def:sink_source_parallel}. In particular the definition also makes sense when an endpoint of the $\epsilon$-arc is an endpoint of $\tilde{\beta}$. Moreover, as we have the hyperelliptic involution for (1,1) diagrams, for $(\tilde{S},\tilde{\beta},\epsilon)$ we have a set of involutions $\{\tau_1,\tau_2,\tau_3\}$, which gives the symmetry similar to Lemma~\ref{lem:pre_sink_tube} (see the ``Claim'' in the proof of~\cite{lyu2024knot}, Lemma 3.15). For example, there is exactly one sink bigon and one source bigon. Moreover, since each triangle has exactly one $\tilde{\beta}$-parallel boundary arc (see Figure~\ref{fig:bigons_n_triangles}.$(b)(c)$), there is exactly one $\tilde{\beta}$-sink arc and one $\tilde{\beta}$-source arc among the boundary $\epsilon$-arcs of the triangles, and they belong to different triangles. We further call the triangle with $\tilde{\beta}$-sink boundary arc the \textbf{$\tilde{\beta}$-sink triangle}, and the triangle with $\tilde{\beta}$-source boundary arc the \textbf{$\tilde{\beta}$-source triangle}. 
  
  In particular, $ae$ is some boundary arc of a triangle. Since $ae$ is to the right (i.e. source direction) of $\tilde{\beta}$ at $e$ (recall it comes from the boundary $\alpha$-arcs of the \textit{source} sector $S_0$), it is actually a boundary arc of the $\tilde{\beta}$-source triangle, again see Figure~\ref{fig:bigons_n_triangles}.$(b)(c)$. We remark that $ae$ is not necessarily $\tilde{\beta}$-source because of the collapsing of the loop, see Figure~\ref{fig:collapsing_tangle} for an example.
  
  This gives us enough information to define a $\tilde{\beta}$-sink tube as in Definition~\ref{def:sink_tube}, since there are only 2 $\tilde{\beta}$-sink arcs among the boundary $\epsilon$-arcs of the non-quadrilateral regions. Similar to Lemma~\ref{lem:char_sink_tube}, this sink tube contains all sink sectors.

  \vspace{6pt}

  \textbf{Part 4: perform the old sink tube push for rational tangle strands whenever possible.}

  In~\cite{lyu2024knot} we defined the sink tube push for rational tangle strands. The idea was to find a systematic way to simplify $\tilde{\beta}$ while fixing the boundary $\tilde{\beta}$-arc of the $\tilde{\beta}$-source bigon. The situation is different here. Instead of fixing the boundary $\tilde{\beta}$-arc of the $\tilde{\beta}$-source bigon, we now need to fix the boundary $\tilde{\beta}$-arc of the $\tilde{\beta}$-source triangle, which contains $e$ as a vertex.

  However, we still need the old sink tube push. We will perform it whenever it does not move $e$. We now briefly recall the arguments in~\cite{lyu2024knot}.

  \begin{figure}[!hbt]
    \begin{overpic}[scale=0.5]{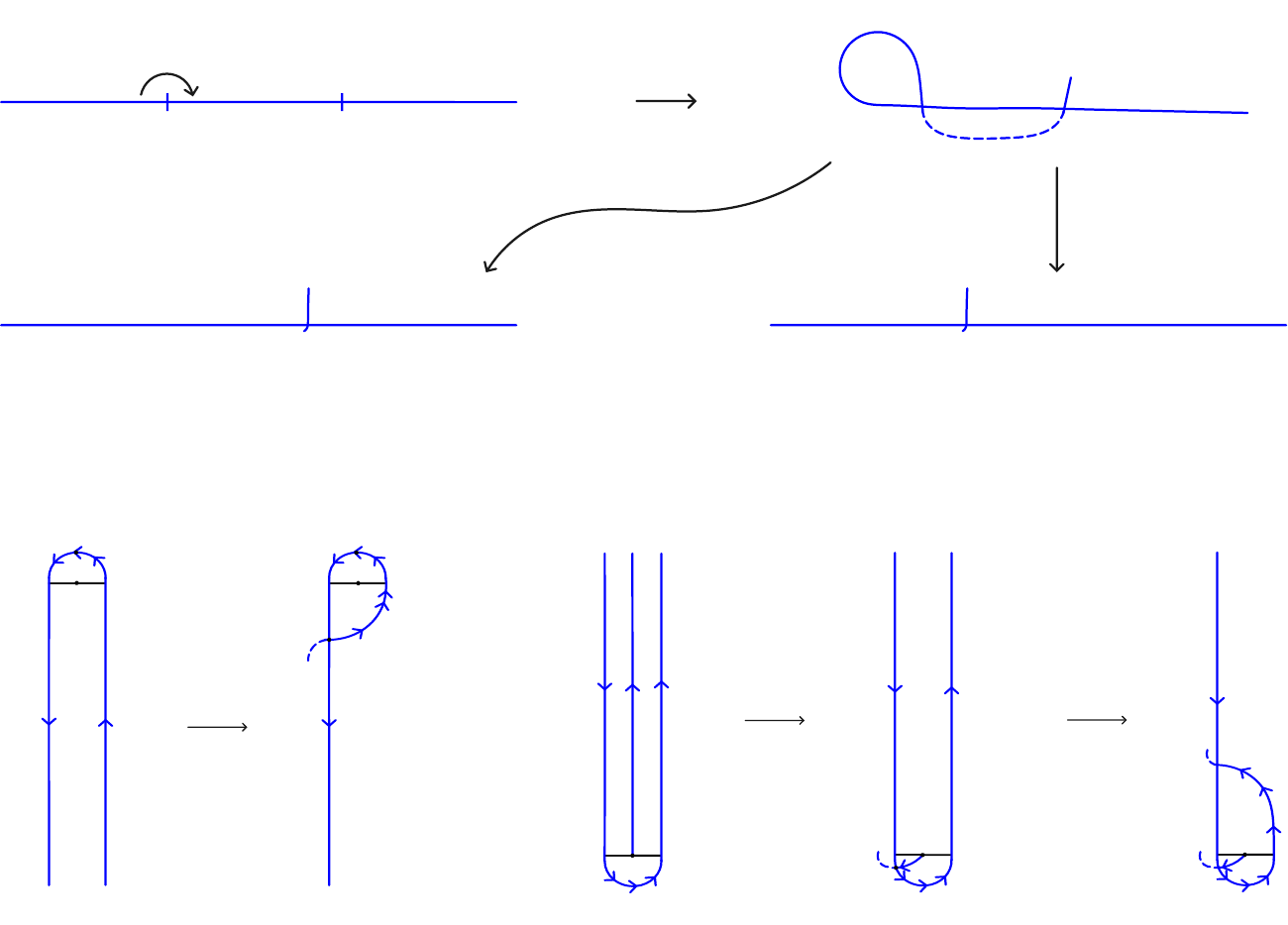}
        \put(50,36){$(a)$}
        \put(15,-1){$(b)$}
        \put(70,-1){$(c)$}
        \put(-1,62){\tiny$P$}
        \put(12,62){\tiny$Q$}
        \put(25.5,62){\tiny$R$}
        \put(39,62){\tiny$S$}
        \put(83,66){\tiny$P$}
        \put(66,62){\tiny$Q$}
        \put(97,62){\tiny$S$}
        \put(25,55){\tiny$|PQ|>|QR|$}
        \put(84,55){\tiny$|PQ|<|QR|$}
        \put(23,50){\tiny $P$}
        \put(-2,44.5){\tiny $Q(S')$}
        \put(10,44.5){\tiny $R(R')$}
        \put(21,44.5){\tiny $X_p(Q')$}
        \put(38,44.5){\tiny $S(P')$}
        \put(74,50){\tiny $P$}
        \put(57,44.5){\tiny $Q(P')$}
        \put(72,44.5){\tiny $X_p(Q')$}
        \put(84,44.5){\tiny $R(R')$}
        \put(97,44.5){\tiny $S(S')$}
        \put(49,6){\tiny $P$}
        \put(71,6){\tiny $P$}
        \put(67,3){\tiny $X_p$}
        \put(5,30){\tiny $Q$}
        \put(27,30){\tiny $Q$}
        \put(22.3,23){\tiny $X_q$}
    \end{overpic}
    \caption{The old sink tube push for the rational tangle strand}
    \label{fig:old_tangle_push}
  \end{figure}

  One can define each $\tilde{\beta}$-segment cut out by $\epsilon$ to be of length 1. By the symmetry of $\tilde{\beta}$ we can describe it as Figure~\ref{fig:old_tangle_push}.$(a)$ upper-left, where $P,S$ are endpoints of $\tilde{\beta}$, $Q$ is the midpoint of the boundary $\tilde{\beta}$-arc of the sink bigon, and $R$ the midpoint of the boundary $\tilde{\beta}$-arc of the source bigon, such that $P,Q,R,S$ appear in order and $\mathrm{d}(P,Q)=\mathrm{d}(R,S)$. Then $P$ is on the boundary of the $\tilde{\beta}$-sink triangle (see Figure~\ref{fig:old_tangle_push}.$(c)$ left), and $S$ is on the boundary of the $\tilde{\beta}$-source triangle; in particular $S$ is the basepoint $a$. 
  
  The old ``sink tube push'' is then to push the long $\tilde{\beta}$-arc of the sink tube in $PQ$ onto the other long $\tilde{\beta}$-arc in $QS$. Moreover, we modify our operation at the ends of the tube, so that near the $\tilde{\beta}$-sink bigon we obtain a $\tilde{\beta}$-loop bounding a basepoint (see Figure~\ref{fig:old_tangle_push}.$(b)$), and near the $\tilde{\beta}$-sink triangle we push the $\tilde{\beta}$-arc connected to $P$ onto the $\tilde{\beta}$-edge of the triangle (see the first arrow in Figure~\ref{fig:old_tangle_push}.$(c)$). See also the first arrow of Figure~\ref{fig:tangle_example}.$(a)$.

  \begin{figure}[!hbt]
    \begin{overpic}[scale=0.55]{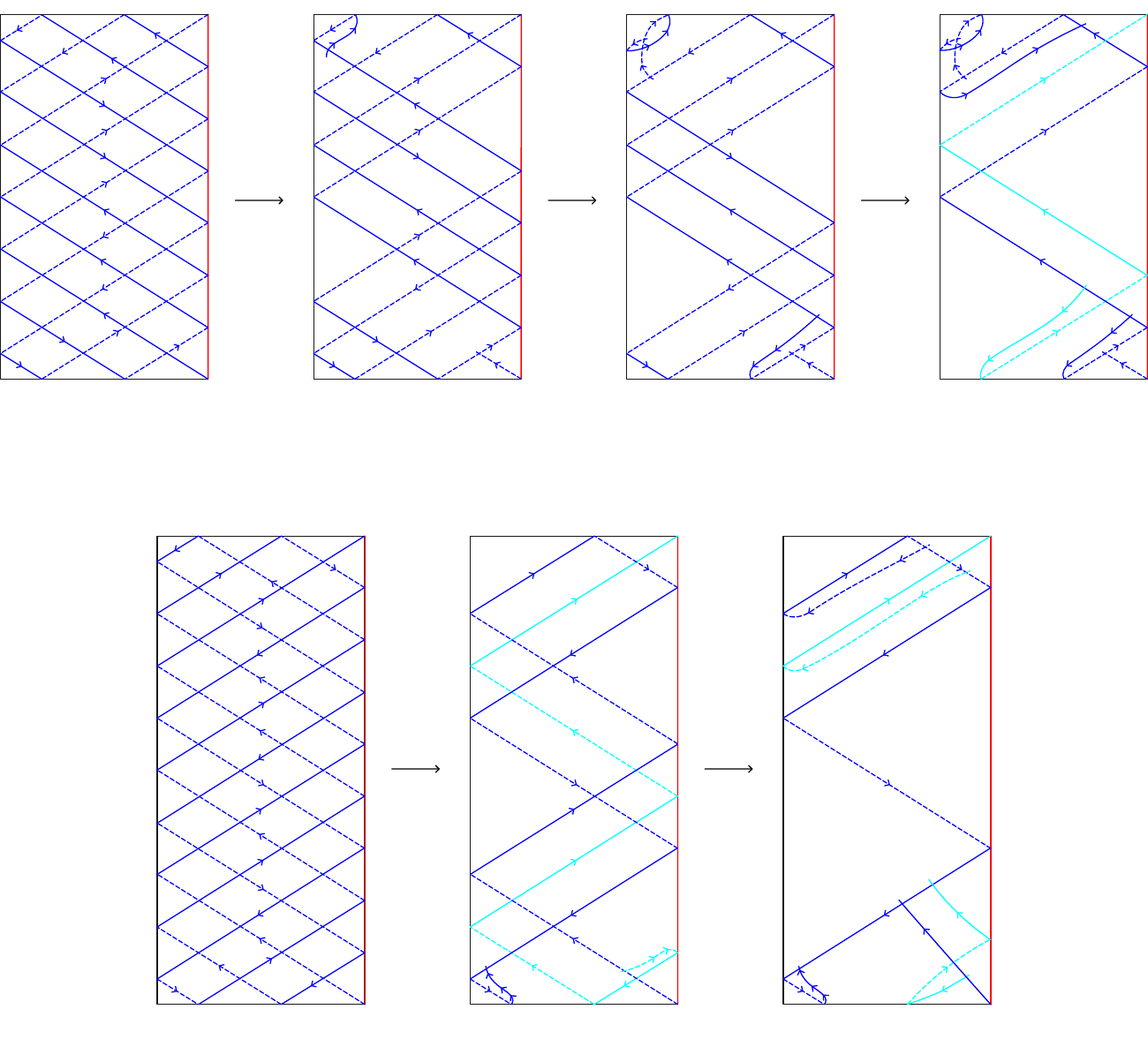}
      \put(48.5,49){$(a)$}
      \put(7.5,55){$\frac{5}{14}$}
      \put(35,55){$\frac{4}{11}$}
      \put(62.5,55){$\frac{3}{8}$}
      \put(90,55){$\frac{1}{4}$}
      \put(18,90){\tiny $a$}
      \put(18.3,84.9){\tiny $e$}
      \put(18.3,58){\tiny $b$}
      \put(-1,58){\tiny $c$}
      \put(-1,90){\tiny $d$}
      \put(48.5,-3.5){$(b)$}
      \put(21,0.5){$\frac{5}{18}$}
      \put(49,0.5){$\frac{2}{7}$}
      \put(77,0.5){$\frac{1}{4}$}
      \put(32,44){\tiny $a$}
      \put(32,39.8){\tiny $e$}
      \put(32,3){\tiny $b$}
      \put(12.5,3){\tiny $c$}
      \put(12.2,44){\tiny $d$}
    \end{overpic}
    \caption{Examples of reducing rational tangle strands}
    \label{fig:tangle_example}
  \end{figure}

  We can check that the new double points created by this sink tube push for $\tilde{\beta}$ are all safe. There are two new double points: $X_q$ on the new $\tilde{\beta}$-loop $\tilde{\beta}_q$ containing $Q$, and $X_p$ on the $\tilde{\beta}$-edge of the $\beta$-sink triangle, which is connected to $P$ by a short arc in the triangle, see Figure~\ref{fig:old_tangle_push}.$(b)(c)$. Every sink corner of possible double points on $\tilde{\beta}_q$ has the basepoint inside on its boundary (there might be double points other than $X_q$ if the basepoint is $b$, where $\alpha$ intersects $\tilde{\beta}_q$). Since the basepoint is not $a$ (which is an endpoint of $\tilde{\beta}$), it is obtained by collapsing a circle. Back on $\Sigma^+$ this circle before collapsing is either a boundary circle of the branched surface, or a branch locus circle whose branch direction points to the other side. Sink corners of double points on $\tilde{\beta}_q$ all have parts of this circle on their boundaries, thus cannot be sink disks. The sink corner of $X_p$ has a boundary arc $PX_p$, and thus has the basepoint $P$ on its boundary. $P$ is not the basepoint $a$ (recall that $S$ is $a$), hence is obtained by collapsing a $\beta$-loop. Again, back on $\Sigma^+$ the sink corner of $X_p$ has a boundary arc belonging to this collapsed loop, where the branch direction points to the other side. It follows that $X_p$ is also safe.

  Since double points on $\tilde{\beta}_q$ are all safe, we can further collapse (or quotient) $\tilde{\beta}_q$ and the disk it bounds in its branch direction on $\tilde{S}$ to the basepoint inside, without destroying possible sink disks. See Figure~\ref{fig:old_tangle_push}.$(a)$. We describe a \textbf{complete} ``sink tube push'' procedure for a rational tangle strand to include both the arc-pushing and the collapsing of the loop afterwards.

  Now if we temporarily ignore the short $\tilde{\beta}$-segment $PX_p$ (we call it a \textbf{tail}), we get a new $\tilde{\beta}$-arc $\tilde{\beta}'$ connecting $Q$ and $S$ (see Figure~\ref{fig:old_tangle_push}.$(a)$ upper-right, notice the loop at $Q$ is collapsed). This $\tilde{\beta}'$ is represented by some rational number $\frac{s}{t}$, where $s,t\geq 0$, $(s,t)=1$ and $s+t<p+q$ (the number of intersections is decreasing at least by 1 by collapsing the $\tilde{\beta}$-loop at $Q$ to the basepoint). Moreover, if $s,t\geq 2$, we still can define the sink tube for $(S,\tilde{\beta}',\epsilon)$. Since we ignore the tail $PX_p$, $P$ is no longer a vertex, and thus the triangle of $(S,\tilde{\beta},\epsilon)$ at $P$ turns to a bigon of $(S,\tilde{\beta}',\epsilon)$; in fact, the \textbf{$\tilde{\beta}$-sink} triangle of $(S,\tilde{\beta},\epsilon)$ containing $P$ becomes the \textbf{sink} bigon of $(S,\tilde{\beta}',\epsilon)$, as shown in Figure~\ref{fig:old_tangle_push}.$(c)$. On the other hand, the source bigon of $(S,\tilde{\beta}',\epsilon)$ is the same as the source bigon of $(S,\tilde{\beta},\epsilon)$ since it is not moved. Depending on whether $\mathrm{d}(P,Q)>\mathrm{d}(Q,R)$ or $\mathrm{d}(P,Q)<\mathrm{d}(Q,R)$ (they cannot be equal since $\mathrm{d}(P,Q)$ contains a half length at $Q$ while $\mathrm{d}(Q,R)$ contains two halves, one at each end, and is thus an integer), we can set new $P',Q',R',S'$ as in the bottom two pictures of Figure~\ref{fig:old_tangle_push}.$(a)$ (we remark that one can identify $Q'$, the midpoint of the boundary $\beta$-arc of the new sink bigon, with $X_p$). We can then do a ``sink tube push'' for $\tilde{\beta}'$, pushing our new $P'Q'$ onto $Q'S'$, in the same sense as before. Notice that since the tail $PX_p$ is inside the new sink bigon, it is not moved or moved over by our new push, see the third picture of Figure~\ref{fig:old_tangle_push}.$(c)$ (hence we are good to first ignore the tail and define a sink tube push with $\beta_c'$). In fact, this tail is collapsed to the basepoint when we collapse the new loop at the sink bigon after doing ``sink tube push'' for $\tilde{\beta}'$. See also the second arrow of Figure~\ref{fig:tangle_example}.$(a)$.

  It follows that we can perform such ``sink tube push'' repeatedly, whenever the characterizing rational number $\frac{p'}{q'}$ has $p',q'\geq 2$, and the operation does not move $e$. Since $p'+q'$ is strictly decreasing along this procedure, it stops after finitely many steps. It stops either with $p'=1$ (recall $q'\geq 3$ since $a$ and $e$ are never moved), or at a place where the ``sink tube push'' would move $e$. In the former case we already obtain a branched surface with all double points safe by the ``$p=1$'' argument at the end of Part 2. In the latter case we need to introduce more operations.

  \vspace{6pt}

  \textbf{Part 5: perform the alternative sink tube push.}

  Now suppose $p',q'\geq 2$ and the old sink tube push would move $e$. We first analyze when this will happen. Recall that among the boundary $\epsilon$-arcs of the non-quadrilateral regions, there are two $\tilde{\beta}$-sink arcs, two $\tilde{\beta}$-source arcs, and two $\tilde{\beta}$-parallel arcs. It follows that besides the $\tilde{\beta}$-sink tube, there is also a $\tilde{\beta}$-source tube and a $\tilde{\beta}$-parallel tube, containing respectively all the $\tilde{\beta}$-source sectors and all the $\tilde{\beta}$-parallel sectors. By our symmetry, the sink tube and the source tube are of the same length (we define the length of a tube to be the length of its boundary $\tilde{\beta}$-arcs, or equivalently, the number of quadrilateral sectors it contains).

  \begin{figure}[!hbt]
    \begin{overpic}{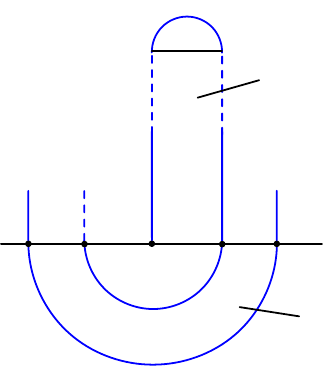}
      \put(39,32){$a$}
      \put(60,37.5){$g$}
      \put(73.8,37.5){$h$}
      \put(23,38.1){$f$}
      \put(8.7,37.5){$k$}
      \put(70,80){source tube}
      \put(81,16){sink sector}
    \end{overpic}
    \caption{A local picture near the $\tilde{\beta}$-source triangle}
    \label{fig:source_triangle}
  \end{figure}

  Now since the sink tube push would move $e$, we know $e$ is on the boundary of the sink tube. Hence the boundary $\tilde{\beta}$-arc of the source triangle is on the boundary of the sink tube. Let $P,Q,R,S$ be as before marking the $\tilde{\beta}$-rational tangle strand. Let $a,f,g$ be the vertices of the source triangle, such that $ag$ is the $\tilde{\beta}$-source arc. Recall $a=S$ and $e\in \{f,g\}$. Let $f,g,h,k$ be the vertices of the sink sector that shares a boundary $\tilde{\beta}$-arc with the source triangle, such that they appear on the boundary of the sector in clockwise order. See Figure~\ref{fig:source_triangle}.

  Let $\tilde{\beta}_1,\tilde{\beta}_2$ be the two boundary $\tilde{\beta}$-arcs of the sink tube, such that $\tilde{\beta}_1$ has an endpoint $P$. Then the old sink tube push is to push $\tilde{\beta}_1$ onto $\tilde{\beta}_2$ (recall Figure~\ref{fig:old_tangle_push}). Now since this old sink tube push would move $e$, the arc $fg$ is on $\tilde{\beta}_1$ and $hk$ on $\tilde{\beta}_2$. We claim that $f$ is on $gP$ on $\tilde{\beta}_1$. Suppose not, then $g$ is on $fP$. However, we know $ag$ is one end of the source tube. That being said, $gP$ must contain an entire boundary $\tilde{\beta}$-arc of the source tube. But $gP$ is a proper subarc of $\tilde{\beta}_1$, contradicting the fact that the sink and source tubes are of the same length. Hence $f$ is on $gP$. We remark that we allow $f=P$.

  Now notice $af$ is a boundary $\epsilon$-arc of the parallel tube. Also notice that the other end of the parallel tube is at the sink triangle, i.e. at $P$. Hence the $\tilde{\beta}$-arc $fP$ is a boundary $\tilde{\beta}$-arc of the parallel tube. In particular the parallel tube is shorter than the sink/source tubes. Let $P,f,a,l$ be the vertices of the parallel tube, appearing in counter-clockwise order on the boundary of the tube. Then $al$ is a proper subarc of the boundary $\beta$-arc of the source tube containing $a$. See Figure~\ref{fig:alternative_push} left.

  \begin{figure}[!hbt]
    \begin{overpic}[scale=0.8]{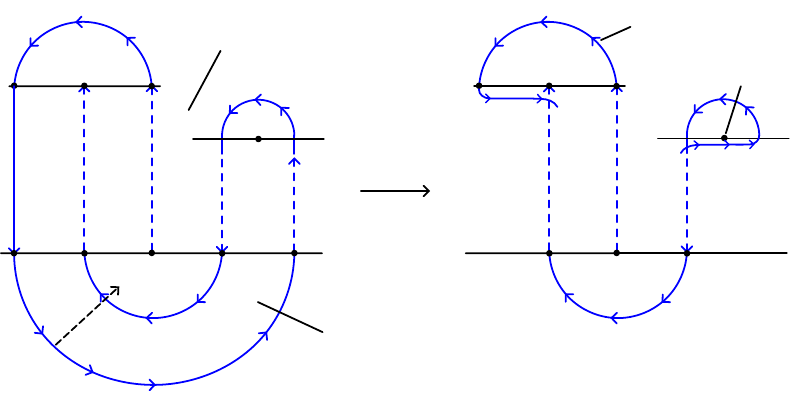}
      \put(19.7,19.4){$a(S)$}
      \put(28.6,19.4){$g$}
      \put(37.8,19.4){$h$}
      \put(11,20){$f$}
      \put(2,20){$k$}
      \put(11,40.6){$P$}
      \put(19.7,40.6){$l$}
      \put(70,20){$f$}
      \put(78.7,19.4){$a$}
      \put(87.6,19.4){$g$}
      \put(78.7,40.6){$l$}
      \put(70,40.6){$P$}
      \put(27,45){source tube}
      \put(41.8,8){sink tube}
      \put(91,41){basepoint $T$}
      \put(80.1,47.4){long tail}
    \end{overpic}
    \caption{The parallel tube and the alternative push}
    \label{fig:alternative_push}
  \end{figure}

  We now consider how to push arcs to reduce $\tilde{\beta}$ here. Since $e$ is on $\tilde{\beta}_1$, here we need to push $\tilde{\beta}_2$ onto $\tilde{\beta}_1$ instead. This time we stop before the endpoints of $\tilde{\beta}_2$ and do not move the endpoints of $\tilde{\beta}_2$, again see Figure~\ref{fig:alternative_push}. We call this the \textbf{alternative sink tube push}. We notice that the boundary $\tilde{\beta}$-arcs of the parallel tube are not moved. In fact, $fP$ is on $\tilde{\beta}_1$ and $al$ is not on the boundary of the sink tube. See also the the last arrow of Figure~\ref{fig:tangle_example}.$(a)$, or the first arrow of Figure~\ref{fig:tangle_example}.$(b)$.

  We still get a $\tilde{\beta}$-loop at the sink bigon, bounding a disk with a basepoint, see Figure~\ref{fig:alternative_push} right. Call this basepoint $T$. Similar to before, sink corners of double points on the boundary of this loop all have $T$ on their boundaries. Since $T$ is not $a=S$, back on $\Sigma^+$ it is either a boundary circle, or a cusp circle along which the branch direction points to the other side (different from the sink corners). Hence these sink corners cannot be sink disks, and these double points are safe. We can then collapse this new $\tilde{\beta}$-loop and the disk it bounds in its branch direction to $T$, as before. We remark that possible remaining tail from the old sink tube push operations is collapsed to $T$ here.

  Now after the alternative push and the collapsing of the $\tilde{\beta}$-loop, we obtain a $\tilde{\beta}$-arc $TP$ connecting the basepoints, and a \textit{long} tail containing $al$, again see Figure~\ref{fig:alternative_push} right. See also Figure~\ref{fig:tangle_example}, where the long tails are marked with light blue arcs. In particular the sink corner of the new double point at the end of the tail has $P$ on its boundary, and (similar to before) cannot be a sink disk.

  The key observation here is that, if we neglect the long tail and consider the rational tangle strand $TP$, then the source triangle $afg$ for the old strand $PS$ becomes the source bigon $fg$ for the new strand $TP$, again see Figure~\ref{fig:alternative_push} right. Hence if we could make sense of the old sink tube push for our ``rational tangle strand $TP$ with long tail'', then we get a systematic way to reduce the configuration without moving $e$. We henceforth show that we can indeed do this.
  
  The long tail can be seen as \textit{parallel} to the strand $TP$, or more precisely the subarc $fP$, in the sense that $alPf$ is the parallel tube of the old rational tangle strand. Since the parallel tube cannot contain any sink disk, we can \textit{formally} collapse the tail to $fP$ along the parallel tube (of course we cannot actually move $a$), and regard $TP$ as a \textit{weighted} rational tangle strand, where on $fP$ it has weight 2, and on $fT$ it has weight 1.

  Suppose $TP$ as a rational tangle strand is characterized by $\frac{p''}{q''}$, $p'',q''\geq 1$, $(p'',q'')=1$. If $p'',q''\geq 2$, we consider the old sink tube push defined in Part 3 for the \textit{weighted} rational tangle strand $TP$. Let $P',Q',R',S'$ be 4 marks of $TP$ as in Part 3, where $P',S'$ are the endpoints, $Q'$ is the midpoint of the boundary $\tilde{\beta}$-arc of the sink bigon, and $R'$ the midpoint of the boundary $\tilde{\beta}$-arc of the source bigon (which is $fg$). Then up to a $\frac{1}{2}$ length of $R'f$, either $P'R'$ is the weight 2 part, or $R'S'$ is the weight 2 part.
  
  \begin{figure}[!hbt]
    \begin{overpic}[scale=0.45]{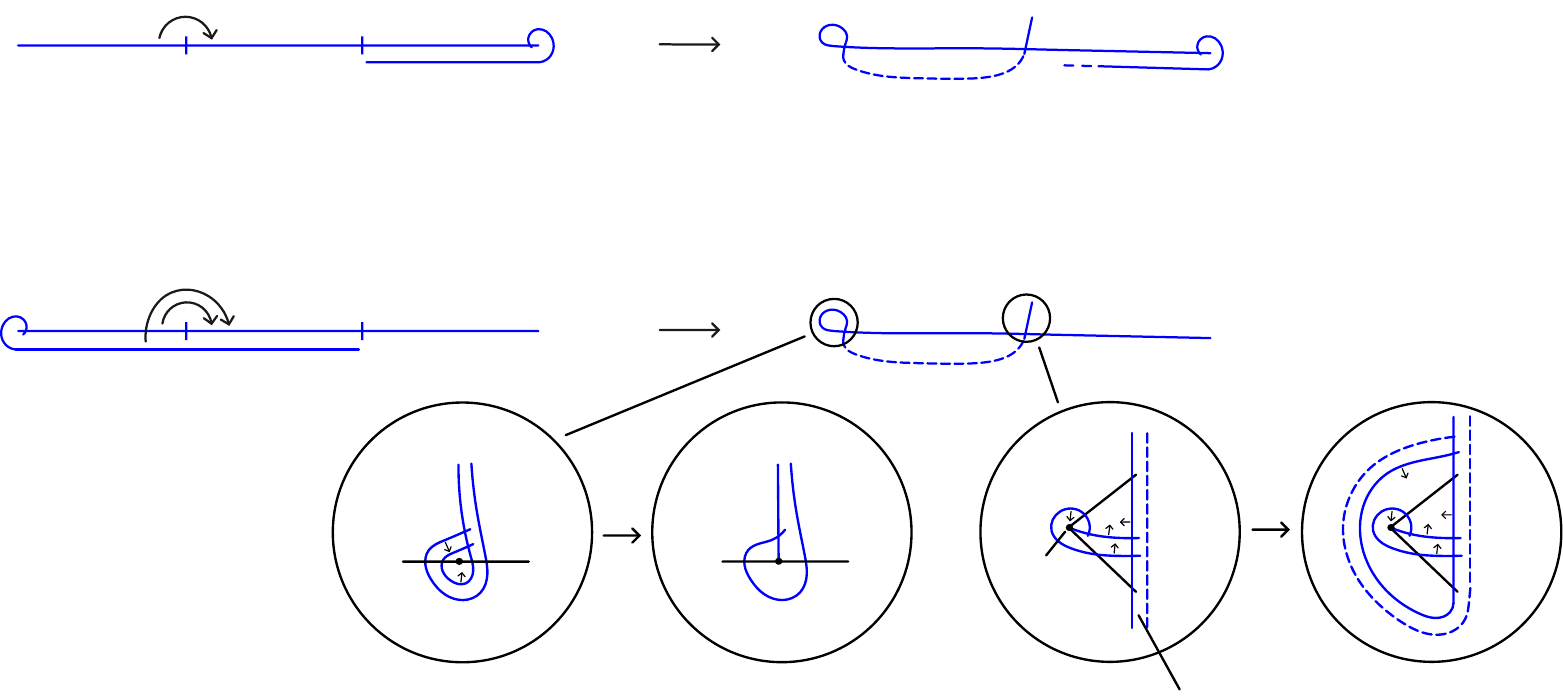}
      \put(0,38){\small $P'$}
      \put(11,38){\small $Q'$}
      \put(22,38){\small $R'$}
      \put(33,38){\small $S'$}
      \put(50,43){\small $Q'$}
      \put(66,43.8){\small $P'$}
      \put(78,42.4){\small $S'$}
      \put(0,19.5){\small $P'$}
      \put(11,19.5){\small $Q'$}
      \put(22,19.5){\small $R'$}
      \put(33,19.5){\small $S'$}
      \put(50,25){\small $Q'$}
      \put(66,26){\small $P'$}
      \put(77.7,23){\small $S'$}
      \put(65,7){\Small $P'$}
      \put(70,-2){\Small could be of weight 1 or 2}
      \put(32,32){$(a)$ $R'S'$ is of weight 2}
      \put(32,-2){$(b)$ $P'R'$ is of weight 2}
    \end{overpic}
    \caption{Sink tube push for the weighted rational tangle strand}
    \label{fig:weighted_strand}
  \end{figure}

  Suppose first that $R'S'$ is the weight 2 part. Notice from Figure~\ref{fig:alternative_push} that the long tail is in the source direction of $fP$, hence it will not be pushed over. Moreover, since it is parallel to $R'S'$, it also will not be pushed. See Figure~\ref{fig:weighted_strand}.$(a)$. It follows that in this case we can just do the old sink tube push for the $TP$ strand and keep the long tail unchanged. After collapsing the $\tilde{\beta}$-loop generated by the push at the sink bigon (or $Q'$), we get the same configuration as before, plus a short tail. Similar to before, the short tail is in the sink bigon of the new strand $Q'S'$, and will not obstruct further sink tube push.

  Now suppose $P'R'$ is the weight 2 part. In this case the long tail still will not be pushed over; however, part of it will be pushed together with $P'Q'$, as shown in Figure~\ref{fig:weighted_strand}.$(b)$ (see also the second arrow of Figure~\ref{fig:tangle_example}.$(b)$). Now near $Q'$, the figure looks like a ``bubbled'' loop, see the first local picture in Figure~\ref{fig:weighted_strand}.$(b)$. Similar to before, we can argue that double points on the inner loop are all safe, and can thus collapse the inner loop to the basepoint inside. After the collapsing the local configuration looks like the original weight-2 end of the strand, see the second local picture in Figure~\ref{fig:weighted_strand}.$(b)$. Moreover, near $P'$, we obtain a ``bubbled tail'' that is almost inside the sink triangle of $TP$, see the third local picture in Figure~\ref{fig:weighted_strand}.$(b)$, where the black dot denotes the basepoint $P'$. If we neglect this bubbled tail, then we obtain the same ``rational tangle strand and long parallel tail'' configuration, where the rational tangle strand is the subarc $Q'S'$, and the long tail is parallel to $Q'R'$. Moreover, this bubbled tail almost lies in the sink bigon of the new strand $Q'S'$, hence will be closed into the (possibly bubbled) loop of the next sink tube push (if there is one) for the new strand $Q'S'$ with tail, see the fourth local picture in Figure~\ref{fig:weighted_strand}.$(b)$. If we still can collapse the loop, then this bubbled tail will not obstruct further sink tube push.
  
  It remains for us to check that new double points are all safe, and in particular that we can collapse the loop with bubbled tail inside, as in the fourth local picture in Figure~\ref{fig:weighted_strand}.$(b)$. The new double points all appear in the four local pictures in Figure~\ref{fig:weighted_strand}.$(b)$, where the black arrows indicate the branch directions. One can directly check that sectors without basepoint on the boundary are not sink disks. For those sectors with basepoint on the boundary, we recall the basepoint is from a collapsed loop, and as before these sectors cannot be sink disks.

  It follows that we could repeatedly perform the old sink tube push for the weighted rational tangle strand $TP$, whenever $p'',q''\geq 2$, without creating new double points that are not safe. Similar to before $(p''+q'')$ strictly decreases along the procedure, and hence it must end in finitely many steps. It stops when $p''=1$. Now as before the weighted rational tangle strand intersects (the interior of) $\tilde{\alpha}$ in the same direction. Since the tail is parallel to part of the rational tangle strand, we know the actual $\tilde{\beta}$-arcs - the rational tangle strand and the long tail - also intersect (the interior of) $\tilde{\alpha}$ in the same direction. Hence we can still use the argument at the end of Part 2 and argue that all remaining double points are safe. 

  \vspace{6pt}

  \textbf{Part 6: finish the proof.}
  
  Parts $1\sim 5$ showed that we can always split $\mathcal{A''}$ by pushing arcs to get a new branched surface $\mathcal{A'''}$ where all double points are safe. Since splittings by pushing arcs are always modelled on the middle picture of Figure~\ref{fig:splittings}, they do not change the topology of $\partial_v N(\mathcal{A''})$. Hence the branch locus of $\mathcal{A'''}$ still consists of 4 immersed curves. These immersed curves intersect at vertices of $S_0$ and $M,N$ (which are never moved). Hence we can apply Lemma~\ref{lem:sk_corner} here and conclude that $\mathcal{A'''}$ is sink disk free. 
  
  We now check we can apply Lemma~\ref{lem:sk_disk_free} to $\mathcal{A'''}$. Again, these splittings do not change the topology of $\partial_h N(\mathcal{A''})$, hence $\partial_h N(\mathcal{A'''})$ is homeomorphic to $\partial_h N(\mathcal{A''})$. Recall from our constructions that $\mathcal{A}=\mathcal{B''}-\mathcal{C'}$, and that we only attached $\mathcal{A}$ to one side of $\partial_h^{\pm}$. Hence if we attach the other side of $\partial_h^{\pm}$ of $\mathcal{A''}$ back to $N(\mathcal{C'})$, we get an embedding $N(\mathcal{A''})\hookrightarrow N(\mathcal{B''})$, where vertical boundary annuli of $N(\mathcal{A''})$ are embedded into the vertical boundary annuli of $N(\mathcal{B''})$, except for the one coming from the $\mathcal{C'}$-cusp. Hence if there is a disk component of $\partial_h N(\mathcal{A''})$, its boundary must correspond to the $\mathcal{C'}$-cusp, for otherwise we would get a meridional disk in $N(\mathcal{B''})$, which is homeomorphic to the (1,1) knot complement. However, the two horizontal boundary components of $N(\mathcal{A''})$ that have $\mathcal{C'}$-cusp on the boundary are two copies of $\partial_h^{\pm}$ respectively, corresponding to the side that $\mathcal{A}$ is not attached to; in particular they are pairs of pants. It follows that no component of $\partial_h N(\mathcal{A''})$ is a disk. Hence no component of $\partial_h N(\mathcal{A'''})$ is a disk. Moreover, $\mathcal{A'''}$ also cannot carry any sphere or torus, for it will then be carried by $\mathcal{B''}$, and thus become a non-separating surface in the ambient rational homology sphere (that the (1,1) knot lives in). Now we can apply Lemma~\ref{lem:sk_disk_free} to $\mathcal{A'''}$ and conclude that it fully carries a lamination. Since $\mathcal{A'''}$ is obtained by splitting $\mathcal{A''}$, we know $\mathcal{A''}$ also fully carries a lamination.
\end{proof}

\bibliography{references.bib}
\bibliographystyle{alpha}

\end{document}